\documentclass{amsart}

\usepackage{a4wide}

\usepackage[utf8]{inputenc}

\usepackage{bbm}
\usepackage{amsmath}
\usepackage{amssymb}
\usepackage{amsfonts}
\usepackage{amsthm}
\usepackage{hyperref}

\usepackage{todonotes}

\usepackage{color}

\usepackage[english]{babel}
\usepackage{graphicx}
\usepackage{indentfirst}

\theoremstyle{theorem}
\newtheorem{theorem}{Theorem}[section]
\newtheorem{proposition}[theorem]{Proposition}
\newtheorem{lemma}[theorem]{Lemma}
\newtheorem{corollary}[theorem]{Corollary}

\theoremstyle{definition}
\newtheorem{definition}[theorem]{Definition}
\newtheorem{remark}[theorem]{Remark}
\newtheorem{example}[theorem]{Example}

\newcommand{\be}{\mathbf{e}}

\newcommand{\D}{\mathcal{D}}

\newcommand{\Z}{\mathbb{Z}}
\newcommand{\N}{\mathbb{N}}
\newcommand{\R}{\mathbb{R}}
\newcommand{\Q}{\mathbb{Q}}
\newcommand{\K}{\mathbb{K}_{A}}
\newcommand{\E}{\mathcal{E}}
\newcommand{\bb}{B}
\newcommand{\F}{\mathcal{F}}
\newcommand{\qbb}{\Z^n((B))}
\newcommand{\zbb}{\Z^n[[B]]}
\newcommand{\qbbdual}{\Z^n((B^*))}

\def\mm\mathcal{ M }
\def\ll{\mathcal{L}}


\numberwithin{equation}{section}

\begin{document}
	\title[Rational self-affine tiles]{Rational self-affine tiles associated to standard and nonstandard digit systems}
	\author{Luc\'ia Rossi}
	\author{Wolfgang Steiner}
	\author{J\"org M. Thuswaldner}
	
	\address[L.R. \& J.M.T.]{Montanuniversit\"at Leoben, Franz Josef Straße~18, A-8700 Leoben, Austria}
	\address[W.S.]{Universit\'e Paris Cit\'e, CNRS, IRIF, F-75013, Paris, France}
	\date{\today}
	\keywords{Self-affine set, tiling, digit system}
	\subjclass{11A63, 28A80}
	\thanks{The doctoral position of the first author is supported by the Austrian Science Fund (FWF) as part of the Discrete Mathematics Doctoral Program, project W1230. This work was supported by the project SYMDYNAR funded by the Agence Nationale de la Recherche and the Austrian Science Fund (ANR-23-CE40-0024-01, FWF I 6750), and bythe PHC Amadeus / \"OAD Amad\'ee project ``Topology, dynamics and number theory of fractal structures''. The second author was supported by the Agence Nationale de la Recherche through the project CODYS (ANR-18-CE40-0007).}
  
\email{lucia.rossi.moure@gmail.com}
\email{steiner@irif.fr}
\email{joerg.thuswaldner@unileoben.ac.at}

\maketitle

\begin{abstract}

	We consider digit systems $(A,\D)$, where $ A \in  \Q^{n\times n}$ is an expanding matrix and the \emph{digit set} $\D$ is a suitable subset of $\Q^n$.
	To such a system, we associate a self-affine set $\F = \F(A,\D)$ that lives in a certain representation space $\K$. If $A$ is an integer matrix, then $\K = \R^n$,
	while in the general rational case $\K$ contains an additional solenoidal
	factor. We give a criterion for $\F$ to have positive Haar measure,\textit{ i.e.},
	for being a \emph{rational self-affine tile}. We study topological
	properties of $\F$ and prove some tiling theorems. Our setting is very
	general in the sense that we allow $(A,\D)$ to be a \textit{nonstandard} digit system. A \textit{standard} digit system $(A,\D)$ is one in which we require $\D$ to be a complete system of residue class representatives w.r.t.\ a certain naturally chosen residue class ring. Our
	tools comprise the Frobenius normal form and character theory of locally
	compact abelian groups.
\end{abstract}

 \section{Introduction}
 This paper is a contribution to the theory of self-affine tiles whose foundations were established in the early 1990s by Bandt~\cite{MR1036982}, Kenyon~\cite{MR1185093}, Gr\"ochenig and Haas~\cite{MR1348740}, as well as Lagarias and Wang~\cite{MR1395075,MR1399601,MR1428817} and which has gained a lot of attention in the past decades.

We recall the definition of a self-affine tile. Let $A\in\R^{n\times n}$ be an expanding matrix ({\it i.e.}, all its eigenvalues lie outside the unit circle) with integer determinant, and let $\D\subset\R^n$ be a {\it digit set} with $|\D|=|\det A|$. Then we call the pair $(A,\D)$ a {\it digit system}. By Hutchinson~\cite{MR625600}, there exists a unique nonempty compact subset $\F=\F(A,\D)$ of $\R^n$ that satisfies the set equation 
\begin{equation}\label{eq:inttile}
A\F=\bigcup_{d\in\D}(\F+d). 
\end{equation}
If $\F$ has positive Lebesgue measure, it is called a \textit{self-affine tile}. Of special interest are the so-called integral self-affine tiles (see~\cite{MR1395075}), which are obtained when the matrix and the digits have integer coefficients.  By Bandt~\cite{MR1036982}, the Lebesgue measure of an integral self-affine tile $\F$ is certainly positive if $(A,\D)$ is a {\it standard digit system}, which means that $\D$ is a complete set of residue class representatives of  $\Z^n/A\Z^n$.  One very famous example of such an integral self-affine tile is Knuth's {\it twin dragon} (see~\cite[p.~206]{Knu}), whose boundary is a fractal set. Lagarias and Wang~\cite{MR1395075,MR1399601} also regarded the matter of $(A,\D)$ being {\it nonstandard}.  In this case, for a given matrix $A\in\Z^{n\times n}$ it is a highly nontrivial problem to characterize all digit sets $\D$ for which $\F(A,\D)$ has positive Lebesgue measure (cf.~\cite{AnLau19,LaiLau17}). 

{\it Rational self-affine tiles} are introduced by the second and third authors in~\cite{MR3391902}. They constitute a natural generalization of integral self-affine tiles to rational matrices which no longer need to have an integer determinant. In particular, a rational self-affine tile is defined in terms of an expanding matrix in $\Q^{n\times n}$ with irreducible characteristic polynomial and a digit set taken from a $\Z$-module defined in terms of this matrix. In the present paper we extend the theory of rational self-affine tiles by taking arbitrary expanding rational matrices (this includes the ``reducible case'', as is referred to in \cite{MR3391902}), and allowing nonstandard digit systems, as well as defining a representation space in a somewhat more general way. We provide results in the spirit of Lagarias and Wang~\cite{MR1395075,MR1399601} for this setting.

Let $A\in\Q^{n\times n}$ be an expanding matrix and let
\[
\Z^n[A] := \bigcup_{k=1}^{\infty} \left(\Z^n + A\Z^n + \dots + A^{k-1}\Z^n\right)
\]
be the smallest nontrivial $A$-invariant $\Z$-module containing $\Z^n$. We first define a {\em digit system} $(A,\D)$ where $A$ acts as a {\it base} and $\D \subset \Z^n[A]$ is some finite {\it digit set}, and explore its properties. In this digit system, the set $\Z^n[A]$
plays the role of $\Z^n$ in the sense that $(A,\D)$ can be used to represent elements of $\Z^n[A]$. We will always assume that the digit set $\D$ satisfies $|\D|=|\Z^n[A]/A\Z^n[A]|$, which turns out to be a natural size for a digit set (such digit systems are studied for instance in~\cite{thuswaldnerjankauskas2021}). After that, in order to set up the definition of our tiles, we introduce a representation space of the form $\K:=\R^n\times\Z^n((A^{-1}))$, where $\Z^n((A^{-1}))$ is a valuation ring of certain Laurent series of powers of $A^{-1}$ with coefficients in $\Z^n$. The ring $\Z^n((A^{-1}))$ is a solenoid, and in the one-dimensional case it is isomorphic to the ring of $b$-adic numbers for some $b\in\N$ (which is even a field when $b$ is a prime number). We establish a suitable ``diagonal'' embedding $\varphi$ that maps the elements of $\Z^n[A]$ into $\K$ in a natural way. This allows us to define the main object of study of this paper: the \textit{rational self-affine tile} $\F=\F(A,\D)\subset\K$, which arises as the unique nonempty compact solution of the set equation 
\[
A\F=\bigcup_{d\in\D}(\F+\varphi(d)),
\]
and can be interpreted as the set of ``fractional parts" of expansions in base $A$ with digits in $\D$ (embedded in $\K$). Since we want to study tilings of $\K$ induced by $\F$, we require rational self-affine tiles to have positive measure, which always holds when $\D$ is a complete  set of residue class representatives of $\Z^n[A]/A\Z^n[A]$, and in this case we call $(A,\D)$ a \textit{standard digit system}. However, we also allow $(A,\D)$ to be \textit{nonstandard}, and give a criterion in terms of the digits to guarantee positive measure of $\F$ in this general setting. We prove some topological properties of rational self-affine tiles, as well as the existence of two tilings given by translations of $\F$: the first one has a translation set defined in terms of the digits $\D$, and the second one is a multiple tiling where the translation set is a lattice obtained from the ring $\Z^n[A]$ by embedding it into the space $\K$. Before arriving to the proof of the existence of the multiple tiling, we present a careful analysis of the character group of the locally compact abelian group $\K$. The article \cite{rossi2021number} deals with a special one-dimensional case of what we present in this paper. It is accessible to a general audience and serves as an introduction to the topic. 

Our theory presents three main features that make its study difficult and rich. First of all, we treat the $n$-dimensional case: when dealing with matrices, computing quotients is not always so simple, and we make use of some machinery of linear algebra (like the Frobenius normal form) to solve some of these issues. Secondly, we deal with a space that has a $A^{-1}$-adic factor, and on it we define an ultrametric. 
More challenges arrive when we study the character group of the representation space. Finally, we consider nonstandard digit sets. This implies that sometimes we lose the group structure when considering certain digit expansions, which makes it harder to define tilings.

	  \section{Setting and definitions}
	  
In this section we introduce digit sets and present a way to compute their cardinality. After that we define the representation space, and finally arrive at the definition of a rational self-affine tile. 

What we are doing generalizes well-known facts on ``ordinary'' $\alpha$-ary expansions. In particular, let $\alpha$ be an integer with $|\alpha|>1$. Every real number has an expansion 
\[
\pm(d_md_{m-1}\ldots d_0.d_{-1}d_{-2}\ldots)_{\alpha} := \pm\sum_{j=-\infty}^m d_j\alpha^j
\] 
in base $\alpha$, where every $d_j$ is a {\it digit} taken from the set $\{0,1,\ldots,|\alpha|-1\}$ (the sign ``$\pm$'' is required only for positive~$\alpha$). We can associate to this digit system the set of \textit{integer expansions}, that is, the set of numbers that can be expressed using only nonnegative powers of the {\it base}~$\alpha$; it equals the lattice $\Z$.
Besides that there is the set of \textit{fractional parts}, namely, the numbers that can be expressed using only negative powers of the base~$\alpha$ (excluding the added $``-"$ sign at the beginning). This set is a compact interval given by $J=[0,1]$ for $\alpha>0$ and $J=\big[\tfrac{\alpha}{1-\alpha},\tfrac{1}{1-\alpha}\big]$ for $\alpha<0$.  The collection $\{J+z\,|\,z\in \Z\}$ forms a tiling of $\R$, and this property geometrically reflects the fact that almost every real has a unique $\alpha$-ary expansion. In our general setting, $\Z^n[A]$ will play the role of $\Z$, the rational self-affine tile $\F$ will play the role of $[0,1]$, and $\K$ will play the role of $\R$.

\subsection{Digit systems with rational matrices}\label{sec:DS}
 Many generalizations of $\alpha$-ary expansions (also known as \textit{radix expansions} or \textit{radix representations}) have been studied. Kemp\-ner~\cite{Kempner} and later R\'enyi \cite{Renyi:57} proposed expansions with respect to  nonintegral real bases. Knuth~\cite{Knuth:60} introduced complex bases and related them to fractal sets. Tilings of $\R^n$ arising form radix expansions were studied by Vince \cite{vince93}, and Kov\'acs \cite{MR2025177} considered digit systems in finite dimensional Euclidean spaces.
Akiyama et al.~\cite{MR2448050} introduced a type of expansion where the base is a rational number, and generalizations have been studied in \cite{BSSST:11} and \cite{SSTW}. In~\cite{MR3391902}, the authors considered digit expansions where the base is an algebraic number, which is equivalent to taking expansions w.r.t.\ rational matrices with irreducible characteristic polynomial. The setting from the present paper is a generalization of the one in \cite{MR3391902} for arbitrary expanding rational matrices.

Following Kov\'acs \cite{MR2025177}, given an integer matrix $A\in\Z^{n\times n}$, we can consider expansions in base $A$ with digits in a certain set $\D\subset\Z^n$, meaning we look at ways of expanding a vector $x\in\R^n$ in the form
\begin{equation}\label{radixexpansion}x=\sum_{j=-\infty}^k A^jd_j,\quad d_j\in\D,\end{equation}
and this is linked to the study of integral self-affine tiles that we have mentioned before (see~\cite{MR1395075}). We need the assumption that $A$ is expanding in order for this series to converge.
 
  If we consider a rational matrix $A\in\Q^{n\times n}$, then $\Z^n[A]$ is the natural generalization of $\Z^n$ (we refer to \cite{thuswaldnerjankauskas2021} for more on rational matrix digit systems). This leads to the following definition.

\begin{definition}[Digit system] Let $A\in\Q^{n\times n}$ be an expanding matrix and let $\D\subset \Z^n [A]$ be such that	
	$|\D|=|\Z^n[A]/A\Z^n[A]|.$ 
	Then we say that $(A,\D)$ constitutes a \textit{digit system}, where $A$ is the {\em base} and $\D$ is the {\it digit set}.
	When $\D$ is a complete set of residue class representatives of $\Z^n[A]/A\Z^n[A]$, we say that $(A,\D)$ is a {\it standard digit system} (following~\cite[p.~163]{MR1395075}). Otherwise, we say that $(A,\D)$ is a {\it nonstandard digit system}.
\end{definition}

In connection with digit systems, finite expansions are desirable.
For digit systems $(A, \D)$, the \emph{finiteness property}, stating that every vector $x \in \Z^n[A]$ has a finite expansion of the form 
$
x=A^kd_k+\dots+Ad_1+d_0
$ 
has been studied extensively (see~\cite{thuswaldnerjankauskas2021} and the references given there).
It is easily seen that the requirement that $\D$ is a complete system of residue class representatives of $\Z^n[A]/A\Z^n[A]$ is a necessary, but in general not sufficient, condition for $(A, \D)$ to have the finiteness property. Eventually periodic expansions have also been investigated.

\medskip

Computing the size of a digit set for a given expanding matrix $A\in\Q^{n\times n}$ amounts to computing the order of the quotient group $\Z^n[A]/A\Z^n[A]$, and this is not always straightforward. From here onwards, set 	
\begin{equation}\label{eq:a}
a:=|\Z^n[A]/A\Z^n[A]|, \quad b:=|\Z^n[A^{-1} ]/A^{-1}\Z^n[A^{-1}]|. 
\end{equation}
 We now show how to make use of the Frobenius normal form of $A$ to compute $a$ and $b$, and we prove that $|\det A|=\frac{a}{b}$, which will be crucial later.
Let $A\in\Q^{n\times n}$ with characteristic polynomial $\chi_A$ be given. Consider the space $\Q^n$ regarded as a finitely generated $\Q[t]$-module with the action of $t$ given by multiplication by $A$, that is, if $v\in\Q^n$ then $t\cdot v:=Av$, and the action can be linearly extended to all elements in $\Q[t]$. According to the structure theorem for finitely generated modules over principal ideal domains (see \cite[Chapter~12, Theorem~6]{DF:03}), there exists an isomorphism of the form
\[
\Q^n\simeq \bigoplus_{i=1}^k \Q[t]/(p_i),
\]
where $p_i \in \Q[t]$ are the so-called {\it invariant factors} of $\Q^n$, with the divisibility properties $p_1\mid p_2 \mid \ldots \mid p_k \mid \chi_A$. The polynomials $p_i$ are assumed to be monic, and with this assumption they are unique.
This implies that $A$ is similar to a block diagonal matrix $F=\mbox{diag}(C_1,\ldots. C_k)$, where $C_i$ is the companion matrix of $p_i$ ($1\leqslant i\leqslant k$). $F$ is the well-known {\em Frobenius normal from} of~$A$, also called {\it rational canonical form}, see \cite[Section~12.2]{DF:03}. Using this notation, we get the following result which shows how to compute the value of $a$.

\begin{proposition}\label{lem:frob}
	Let $A \in \Q^{n\times n}$ be given, let $p_i$ $(1\leqslant i \leqslant k)$ be the corresponding invariant factors, and consider the integer polynomials $q_i=c_ip_i \in \Z[t]$, where each $c_i\in\Z$ is chosen so that $q_i$ has coprime coefficients. Let $q_i^* \in \Z[t]$ be the reciprocal polynomial of $q_i$, namely $q_i^*(t):=t^{\deg{(q_i)}}q_i(t^{-1})$.
	Then
	\begin{equation}\label{computingaandb}
	a= \prod_{i=1}^k |q_i(0)|, \quad b= \prod_{i=1}^k |q_i^*(0)|,
	\end{equation}
	and $|\det A|=\frac ab$.
\end{proposition}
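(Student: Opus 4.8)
My plan is to compute $a$ by reducing, via the Frobenius normal form, to the case of a single companion matrix, to evaluate that case explicitly with the help of Gauss's lemma, and then to derive $b$ and $|\det A|$ by applying the same reasoning to $A^{-1}$. The technical crux is a \emph{lattice-independence} statement: for any invertible $M\in\Q^{n\times n}$ and full-rank sublattices $L'\subseteq L$ of $\Q^n$, writing $L[M]:=\sum_{j\ge 0}M^jL$, one has $|L[M]/ML[M]|=|L'[M]/ML'[M]|$. I expect the finiteness input this lemma requires to be the main obstacle of the whole argument.

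To prove the lemma I would clear denominators in a polynomial annihilating $M$, obtaining a primitive $g\in\Z[t]$ with $g(M)=0$; if $c$ is its leading coefficient, then $L[M]$ is a finitely generated module over the principal ideal ring $\Z[1/c]$, torsion-free of rank $n$ and hence free, so the quotients $L[M]/L'[M]$, $L[M]/ML'[M]$, $L'[M]/ML'[M]$ are all finite (the first because $mL\subseteq L'$, hence $mL[M]\subseteq L'[M]$, for some $m\ge 1$). Multiplicativity of the index along $ML'[M]\subseteq ML[M]\subseteq L[M]$ and along $ML'[M]\subseteq L'[M]\subseteq L[M]$, together with $|ML[M]/ML'[M]|=|L[M]/L'[M]|$ (multiplication by $M$ being an automorphism of $\Q^n$), then yields the claim. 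For the reduction, write $A=PFP^{-1}$ with $F=\mathrm{diag}(C_1,\dots,C_k)$ the Frobenius normal form, $C_i$ the companion matrix of $p_i$ and $d_i=\deg p_i$. One checks $\Z^n[A]=P\cdot(P^{-1}\Z^n)[F]$, whence $\Z^n[A]/A\Z^n[A]\cong(P^{-1}\Z^n)[F]/F(P^{-1}\Z^n)[F]$, so by the lemma $a=|\Z^n[F]/F\Z^n[F]|$; since $F$ is block diagonal along $\Z^n=\bigoplus_i\Z^{d_i}$, this gives $\Z^n[F]/F\Z^n[F]=\bigoplus_i\Z^{d_i}[C_i]/C_i\Z^{d_i}[C_i]$, hence $a=\prod_{i=1}^k a_i$ with $a_i:=|\Z^{d_i}[C_i]/C_i\Z^{d_i}[C_i]|$.

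The heart of the proof is the evaluation of $a_i$. The $\Q[t]$-module $(\Q^{d_i},C_i)$ is isomorphic to $\Q[t]/(p_i)=\Q[t]/(q_i)$ with $C_i$ acting as multiplication by $t$ (which is invertible, as $q_i(0)=c_ip_i(0)\ne 0$); by the lemma I may replace the ambient lattice by $M_0:=\bigoplus_{j=0}^{d_i-1}\Z t^j$, so that $M_0[C_i]$ becomes the image $M_i$ of $\Z[t]$ in $\Q[t]/(q_i)$. Since $q_i$ is primitive, Gauss's lemma gives $\Z[t]\cap q_i\Q[t]=q_i\Z[t]$, so $M_i\cong\Z[t]/(q_i)$ with $t$ acting by multiplication, and therefore
\[
a_i=|M_i/tM_i|=|\Z[t]/(q_i,t)|=|\Z[t]/(q_i(0),t)|=|q_i(0)|,
\]
using $q_i\equiv q_i(0)\pmod t$. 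This proves $a=\prod_i|q_i(0)|$.

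Finally I would apply all of this to $A^{-1}$. As $C_i$ is a companion matrix it is nonderogatory, and $C_i^{-1}$ is a polynomial in $C_i$, so $C_i^{-1}$ is similar to the companion matrix of the monic reciprocal $\tilde p_i(t):=p_i(0)^{-1}t^{d_i}p_i(1/t)$; one checks these satisfy $\tilde p_1\mid\dots\mid\tilde p_k$, so they are the invariant factors of $A^{-1}$. From $p_i=c_i^{-1}q_i$ one computes $\tilde p_i=q_i(0)^{-1}q_i^*$, and $q_i^*$ (the coefficient reversal of the primitive polynomial $q_i$) is again primitive, so the primitive integer multiple of $\tilde p_i$ is $\pm q_i^*$; the formula for $a$, applied to $A^{-1}$, therefore gives $b=\prod_i|q_i^*(0)|$. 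Since $\det C_i=(-1)^{d_i}p_i(0)$ and $q_i^*(0)$ equals the leading coefficient $c_i$ of $q_i$,
\[
|\det A|=\prod_{i=1}^k|p_i(0)|=\prod_{i=1}^k\frac{|q_i(0)|}{|c_i|}=\frac{\prod_i|q_i(0)|}{\prod_i|q_i^*(0)|}=\frac{a}{b},
\]
which completes the proof. The one place I anticipate real trouble is the finiteness statement underpinning the lattice-independence lemma: one genuinely needs the structural fact that $L[M]$ is a finitely generated $\Z[1/c]$-module before the index bookkeeping is legitimate and before one can transport the computation from $A$ to its Frobenius normal form; everything after that is the module computation of the third paragraph, whose only substantive ingredient is Gauss's lemma.
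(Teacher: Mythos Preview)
Your overall strategy is sound and in one respect more careful than the paper: the paper asserts without comment that $a=\prod_i|\Z^{d_i}[C_i]/C_i\Z^{d_i}[C_i]|$, silently using that the passage from $A$ to its Frobenius form $F=P^{-1}AP$ leaves the index unchanged, whereas you isolate this as your lattice-independence lemma. Your evaluation for a single companion block via Gauss's lemma is also slicker than the paper's explicit unique-representation argument (which invokes an auxiliary result from \cite{SSTW}), and your treatment of $b$ via the invariant factors of $A^{-1}$ is more explicit than the paper's one line ``proved in the same way by replacing $A$ by $A^{-1}$''.

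There is, however, a genuine error precisely where you anticipated trouble: $L[M]$ is \emph{not} in general a $\Z[1/c]$-module. Take $M=\mathrm{diag}(2,\tfrac32)$ and $L=\Z^2$; the primitive annihilating polynomial is $(t-2)(2t-3)=2t^2-7t+6$, so $c=2$, but $L[M]=\Z\oplus\Z[\tfrac12]$ and $(\tfrac12,0)\notin L[M]$. Fortunately the finiteness you need holds for elementary reasons, after which your index bookkeeping goes through unchanged. Write $L_k=L+ML+\dots+M^kL$; each $L_k$ is a full-rank lattice in $\Q^n$ and $L[M]=\bigcup_k L_k$. From $L[M]=L+ML[M]$ the quotient $L[M]/ML[M]$ is an image of $L/(L\cap ML[M])$; since $L\cap ML[M]\supseteq L\cap ML$ and the intersection of two full-rank lattices in $\Q^n$ is again full rank, this is finite. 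For $[L[M]:mL[M]]$ with $m\in\Z_{\ge 1}$, observe that $L[M]/mL[M]=\varinjlim_k L_k/mL_k$ is a direct limit of groups each of order $m^n$, hence has order at most $m^n$. With these two facts in hand, your multiplicativity argument yields the lemma, and the rest of your proof stands.
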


\begin{proof}
	Let $F=\mbox{diag}(C_1,\ldots, C_k)$ be the Frobenius normal form of $A$. Then it is clear that
	\begin{equation}\label{eq:arep}
	a=\prod_{i=1}^k |\Z^{\deg(q_i)}[C_i]/C_i\Z^{\deg(q_i)}[C_i]|.
	\end{equation}
	Let $C\in \Q^{m\times m}$ be the companion matrix of some polynomial $p\in\Q[t]$ and let $q=cp \in \Z[t]$, where $c\in\Z$ is chosen so that $q$ has coprime coefficients.  We claim that
\begin{equation}\label{eq:iso1}
 \Z^m[C] \simeq \Z[t] / (q).
\end{equation}

 To prove this, let $v \in \Z^m[C]$ be given; then it can be expressed as $v=\sum_{j=0}^{L}C^jv_j$ with $v_j\in\Z^m$, $L\geqslant 0$. For  $1\leqslant j\leqslant m$, denote by $\be_j\in\Q^m$ the $j$-th canonical basis vector. Clearly, each $v_j$ can be expressed in the canonical basis with integer coefficients. Because $C$ is a companion matrix, it follows that $C\be_j=\be_{j+1}$ for $1\leqslant j< m$, so all this yields that $v$ is of the form
	\begin{equation}\label{repofv}
	v= \sum_{j=0}^{\ell}b_jC^j\be_1,
	\end{equation}
	for some $\ell\in \N$ minimal and $b_0,\dots,b_\ell\in\Z$. If $\ell\geqslant m$, we will show that we can take $b_m,\ldots,b_\ell \in \{0,\ldots, |q^*(0)|-1\}$ (here, $q^* \in \Z[t]$ is the reciprocal polynomial of $q$, so $q^*(0)$ corresponds to the leading coefficient of~$q$). Indeed, note that $q(C)=0$ because $C$ is the companion matrix of $q$. Suppose that $b_\ell$ is not in $\{0,\ldots, |q^*(0)|-1\}$; then one can add or subtract $C^{\ell-m}q(C)=0$ in order to obtain another expression on the right side of (\ref{repofv}), without altering the value of $v$. This can be done the appropriate number of times, so we can assume w.l.o.g. that $b_\ell\in\{0,\ldots, |q^*(0)|-1\}$. Repeating this for $\ell-1,\ell-2,\dots,m$, one arrives at $b_m,\ldots,b_\ell \in \{0,\ldots, |q^*(0)|-1\}$, and the representation (\ref{repofv}) with this property and $\ell$ minimal is unique.
	
	 By \cite[Lemma~4.1]{SSTW} each polynomial $r\in\Z[t]/(q)$ can be expressed uniquely as $r= r'+\sum_{j=m}^\ell r_jt^j\mod q$, with $r'\in\Z[x]$, $\deg(r')<m$, $\ell\in\N$ and $r_m,\dots,r_\ell\in\{0,\dots,|q^*(0)|-1\}.$ Using this, one easily checks that 
	\[
	h: \Z^m[C] \to \Z[t] / (q); \quad \sum_{i=0}^{\ell}b_iC^i\be_1 \mapsto \sum_{i=0}^{\ell}b_it^i
	\]
	is an isomorphism and the claim in \eqref{eq:iso1} is proved.
	
	 Because $t\,h(G)=h(CG)$ for any $G\in\Z^m[C]$, this isomorphism implies that
	\[
	\Z^m[C]/C\Z^m[C] \simeq  \Z[t] / (q,t).
	\]
	It is easy to check that $| \Z[t] / (q,t)| = |q(0)|$ (see \cite[p.~1460]{SSTW}) and, hence, we have that \begin{equation}\label{indexmodC}| \Z^m[C]/C\Z^m[C] |=|q(0)|.\end{equation} Applying (\ref{indexmodC}) for $q=q_i$ $(1\leqslant i\leqslant k)$ in \eqref{eq:arep}, the left equation of (\ref{computingaandb}) follows. The right equation of (\ref{computingaandb}) is proved in the same way by replacing $A$ by $A^{-1}$. The assertion $\det A=\frac{a}{b}$ follows from (\ref{computingaandb}) (recall the definition of $q_i$ and the fact that each $p_i$ is monic), because
	\[
	|\det A| = \prod_{i=1}^k |\det C_i| =  \prod_{i=1}^k |p_i(0)| =  \prod_{i=1}^k \frac{|q_i(0)|}{|q_i^*(0)|}=\frac ab. \qedhere
	\]
\end{proof}

\subsection{The representation space}
	  
Properties of digit systems and digit expansions can be reflected geometrically via self-affine sets and tilings. The space $\K$, defined in what follows for a given $A\in\Q^{n\times n}$, will turn out to be a natural space where these sets and tilings can be defined. Suppose that we wanted to define a set $\F(A,\D)\subset\R^n$ satisfying \eqref{eq:inttile}. In order to define a tiling, we would require the union on the right side of \eqref{eq:inttile} to be essentially disjoint. We know that the action of $A$ scales the Lebesgue measure of a set by $|\det A|$, so if $A$ has a nonintegral determinant and $\F$ has positive measure, we would need $\D$ to have a nonintegral amount of digits, which is of course not doable. We will show that the action of $A$ in $\K$ multiplies the Haar measure of a set by $a$, where $a\in\N$ is as in~\eqref{eq:a}.

Standard digit systems $(A,\D)$ where the characteristic polynomial $\chi_A$ of $A\in\Q^{n\times n}$ is irreducible are considered in~\cite{MR3391902}. Rational self-affine tiles are introduced as subsets of a representation space of the form $\R^n\times\prod_{\mathfrak{p}}K_{\mathfrak{p}}$, where each $K_{\mathfrak{p}}$ is a completion of a number field $K$ (defined in terms of $\chi_A$) with respect to a certain absolute value $|\cdot|_{\mathfrak{p}}$. The representation space $\K$ from the present paper is a generalization of this, and can be defined in a simpler way. In the irreducible case, both settings are isomorphic.

\medskip
				
	 Let $A\in \Q^{n\times n}$ be expanding. For convenience of notation, from here onwards we set\[\bb:=A^{-1}. \]
	
	Consider the ring
	\[
	\Z^n(B)=\bigcup_{k\geqslant 1}(B^{-k}\Z^n+B^{-k+1}\Z^n+\dots+B^{k}\Z^n)
	\]
	(note that $\Z^n(A)=\Z^n(B)$). We define on $\Z^n(B)$ the \textit{$\bb$-adic valuation} $\nu:\Z^n(B)\rightarrow\Z\cup\{\infty\}$ as
	\begin{equation}\label{eq:nu}
	\nu(y):=
	\inf\{k\in\Z\;|\;y\notin B^{k+1}\Z^n[B]\}
	\end{equation}
	On $\Z^n(B)$ the \textit{$\bb$-adic metric} is defined by
	\begin{equation}\label{eq:dB}
	{\bf d}_\bb(y,y'):=b^{-\nu(y-y')},
	\end{equation}
	for $b$ as in \eqref{eq:a} and $y,y'\in\Z^n(B)$, with the convention that $b^{-\infty}=0$.
	
	\begin{definition}[$\bb$-adic series]\label{projlim} 
	We define the space $\qbb$ of {\it $\bb$-adic series} as the completion of $\Z^n(B)$ with respect to the metric ${\bf d}_\bb$. 	
	\end{definition}

We extend the metric ${\bf d}_\bb$ to the completion $\qbb$, and hence we extend the $\bb$-adic valuation $\nu$ to $\qbb$ so that it satisfies \eqref{eq:dB}.
Then, every nonzero $y\in\qbb$ can be expressed as Laurent series
\begin{equation}\label{eq:uniqueA^{-1} rep}
y=\sum_{j=\nu(y)}^{\infty}\bb^{j}y_j, \quad y_j\in\Z^n,
\end{equation}	
of powers of $\bb$ with coefficients in $\Z^n$, which converges w.r.t. the metric ${\bf d}_\bb$. Then $\nu(y)$ is the smallest index such that $y$ has an expansion \eqref{eq:uniqueA^{-1} rep} with $y_{\nu(y)}\neq 0$.
We denote by $\zbb$ the subring of $\qbb$ consisting of points $y\in\qbb$ with $\nu(y)\geqslant0$, the ring of power series in $B$ with coefficients in $\Z^n$.
The $\bb$-adic metric satisfies the \textit{ultrametric inequality}, namely
\[{\bf d}_\bb(y,y')\leqslant \max\{{\bf d}_\bb(y,y''),{\bf d}_\bb(y'',y') \}
\]
for every $y,y',y''\in\qbb$. This metric turns  $\qbb$ into a complete separable space, which is also a locally compact topological group. Thus there is a Haar measure $\mu_{\bb }$ on $\qbb$ which is normalized in a way that $\mu_\bb(\zbb)=1$, and we call it the \textit{$\bb$-adic measure}. 
If $ M \subset\qbb$ is a measurable set, then \begin{equation}\label{measureprop}\mu_{\bb }(A^k M )=b^k\mu_\bb( M ).\end{equation}
	\begin{remark}
		Let $n=1$. In this case, $A=\frac{a}{b}$ where $a$ and $b$ are coprime integers. Suppose $b\geqslant2$. 
		We obtain that $\qbb\simeq\Q_b$, where $\Q_b$ is the ring of $b$-adic numbers, and $\zbb\simeq\Z_b$, where $\Z_b$ is the ring of $b$-adic integers. \end{remark}

	\begin{definition}[The representation space]
		Given an expanding matrix $A\in\mathbb{Q}^{n\times n}$, define the {\it representation space} $\K$ as
		$$
		\K:=\R^n\times\qbb.
		$$

We endow the space $\K$ with the following structures:
\begin{enumerate}
	\item It inherits the structure of an additive group from its cartesian factors. 

\item Consider the group of matrices given by\[
\Z[A]:=\bigcup_{k\geqslant 1}( \Z A^{-k}+\Z A^{-k+1}+\dots+\Z A^{k}).\]
 Then $\Z[A]$ acts on $\K$ by multiplication, {\it i.e.}, $G\,(x,y)=(G x,G y)$ if $G\in\Z[A]$ and $(x,y)\in\K$.

\item We define the metric
$$\textbf{d}((x,y),(x',y')):=\mbox{max}\{\|x-x'\|,{\bf d}_\bb(y,y')\},$$
for $(x,y),(x',y')\in\K$, where $\|\cdot\|$ denotes the Euclidean norm in $\R^n$ and ${\bf d}_\bb$ is the $\bb$-adic metric in $\qbb$. This turns $ \K$ into a locally compact topological group. It is easy to check that, for every closed ball ${\bf B}_r(x,y)$ of radius $r> 0$ and center $(x,y)\in\K$, there is a decomposition
\[{\bf B}_r(x,y)={\bf B}_r(x)\times {\bf B}_r(y),
\]
into closed balls on each respective space. This characterizes the topology of $\K$.

\item We define a measure $\mu$ in $\K$ as the product measure
$$\mu:=\lambda\times\mu_{\bb },$$ 
with $\lambda$ being the Lebesgue measure in $\R^n$ and $\mu_\bb$ the $\bb$-adic measure in $\qbb$. Then $\mu$ is the Haar measure on $\K$ satisfying $\mu([0,1]\times\zbb)=1$.

\end{enumerate}	
	\end{definition}

\begin{remark} \label{assumptionforE}
	
	Note that, when $A\in\Z^{n\times n}$ is an integer matrix, the space $\qbb$ is trivial and plays no role, and hence $\K=\R^n$. However, $\K=\R^n$ may also happen in the noninteger case: for example, if $A=\begin{pmatrix}2&\frac{1}{2}\\0&3\end{pmatrix}$ we have $b=1$. Suppose $b=1$, {\it i.e.}, that $\Z^n[\bb]/\bb\Z^n[\bb]$ is trivial. Then by Lemma \ref{lem:frob} $\det A=a$ is an integer. The results presented in Section \ref{tiling theorems} are proven by Lagarias and Wang in \cite{MR1399601} for real expanding matrices with integer determinant. For this reason, in all that follows we assume $b \geqslant 2$.\end{remark}

\begin{lemma}\label{detofA}
	If $ M \subset\K$ is a measurable set, then $\mu(A M )=a\,\mu( M )$.

	\end{lemma}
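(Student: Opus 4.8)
The plan is to use that $A$ acts diagonally on $\K=\R^n\times\qbb$ and respects the product decomposition of the Haar measure $\mu=\lambda\times\mu_\bb$, so that the scaling factor of $A$ on $\mu$ is the product of its scaling factors on $\lambda$ and on $\mu_\bb$. First I would record that the map $\Phi\colon\K\to\K$, $(x,y)\mapsto(Ax,Ay)$, is a topological automorphism of the locally compact abelian group $\K$: it is additive and bijective with inverse $(x,y)\mapsto(Bx,By)$, and both maps are continuous, since multiplication by $A$ is a linear homeomorphism of $\R^n$ and on $\qbb$ one has $\nu(Ay)=\nu(y)-1$, hence ${\bf d}_\bb(Ay,Ay')=b\,{\bf d}_\bb(y,y')$. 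In particular $\Phi$ takes Borel sets to Borel sets and null sets to null sets, so $\mu(AM)$ is well defined for every measurable $M$.

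Next I would argue that $M\mapsto\mu(AM)$ is again a Haar measure on $\K$: it is a Borel measure, finite on compact sets and positive on nonempty open sets because $\Phi$ is a homeomorphism, and translation-invariant because $\Phi$ is a homomorphism, so that $\mu(A(M+t))=\mu(AM+At)=\mu(AM)$ for every $t\in\K$ by translation-invariance of $\mu$ (as $t$ ranges over $\K$ so does $At$, since $\Phi$ is bijective). By uniqueness of Haar measure up to a positive scalar there is a constant $c>0$ with $\mu(AM)=c\,\mu(M)$ for all measurable $M\subset\K$.

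Finally, to pin down $c$ I would evaluate both sides on a product set of positive finite measure, say $M_0=[0,1]^n\times\zbb$, for which $\mu(M_0)=1$. Since $\Phi$ respects the product structure, $AM_0=(A[0,1]^n)\times(A\zbb)$, so $\mu(AM_0)=\lambda(A[0,1]^n)\cdot\mu_\bb(A\zbb)=|\det A|\cdot b$, using the change-of-variables formula for Lebesgue measure and \eqref{measureprop} with $k=1$, $M=\zbb$. Since $|\det A|=a/b$ by Proposition~\ref{lem:frob}, this gives $c=a$. Alternatively, one can bypass the uniqueness of Haar measure: for a measurable rectangle $S\times T$ the identity $\mu(A(S\times T))=\lambda(AS)\,\mu_\bb(AT)=|\det A|\,b\,\mu(S\times T)=a\,\mu(S\times T)$ is immediate from the two coordinatewise scaling laws, and since $\K$ is $\sigma$-compact the two $\sigma$-finite measures $M\mapsto\mu(AM)$ and $M\mapsto a\,\mu(M)$ agree on the algebra generated by measurable rectangles, hence everywhere.

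The only point that needs a little care is the combination of the two scaling constants — the $\qbb$-direction contributes $b$ and the $\R^n$-direction contributes $|\det A|=a/b$, whose product is exactly the integer $a$ — together with the routine measure-theoretic extension from rectangles (or the appeal to uniqueness of Haar measure); there is no deeper obstacle.
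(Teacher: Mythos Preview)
Your argument is correct and follows essentially the same line as the paper: compute the scaling factor on a product set using $\lambda(AS)=|\det A|\,\lambda(S)$, $\mu_\bb(AT)=b\,\mu_\bb(T)$ from \eqref{measureprop}, and $|\det A|=a/b$ from Proposition~\ref{lem:frob}, then extend from rectangles to all measurable sets. The paper does the extension step directly via the product $\sigma$-algebra, which is your ``alternative'' route; your primary route through uniqueness of Haar measure is a harmless repackaging of the same computation.
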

	\begin{proof}
		Consider a measurable subset of $\K$ of the form $ M _1\times M _2$, where $ M _1\subset\R^n$ and $ M _2\subset\qbb$ are both measurable sets. We have seen in Proposition \ref{lem:frob} that $\det A=\frac{a}{b}$. Then
			\begin{equation*}
		\mu(A( M _1\times M _2))=\lambda(A M _1)\,\mu_{\bb }(A M _2)
		=\frac ab\,\lambda( M _1)\, b\,\mu_{\bb }( M _2)
		=a\,\mu( M _1\times M _2).
		\end{equation*}
		Since $\mu=\lambda\times\mu_\bb$ is a product measure, the $\sigma$-algebra of $\mu$-measurable sets is generated by sets of the form $ M _1\times M _2$. Therefore, if $ M \subset\K$ is measurable we have $\mu(A M )=a\,\mu( M ).$
	\end{proof}

 The previous lemma implies that, in some sense, the base $A$ has ``enough space" for $a$ digits when the digit system is embedded in $\K$. In fact, when considering a rational matrix $A$ with integer determinant, it suffices to take $|\D|=|\det A|$, because $A$ acts in some sense like an integer matrix: the action of $A$ scales the measure of a set by an integer factor. When $A$ has nonintegral determinant, it turns turns out that $A$ acts in $\K$ like an integer matrix, and that is why this space is appropriate for our purposes. This relation between the measure and the cardinality of the digit set will be important in all that follows.

\subsection{Rational self-affine tiles}
We proceed to introduce a set $\F$ associated to the digit system $(A,\D)$ that reflects features of its structure. It can be regarded as the set of ``fractional parts" of the digit system, in the sense that it plays the same role as the interval $[0,1]$ does for the decimal digit system. In Section~\ref{tiling theorems} we will study topological properties of $\F$. 

We need the following lemma, which is in the spirit of Lind~\cite{MR684244}.

\begin{lemma}\label{metricell}
Let $A\in\Q^{n\times n}$ be expanding and assume $b\geqslant 2$, with $b$ as in \eqref{eq:a}. Then there exists a metric $\boldsymbol{\ell}$ on $\K$ w.r.t.\ which the action of $\bb=A^{-1}$ is a contraction. In particular, there exists $0\leqslant \kappa<1$ such that
\begin{equation}\label{eq:elldef}
\boldsymbol{\ell}(\bb\, (x,y),\bb\, (x',y'))\leqslant \kappa\,\boldsymbol{\ell}((x,y),(x',y'))
\quad((x,y),(x',y')\in\K).
\end{equation}
\end{lemma}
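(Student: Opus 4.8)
The plan is to build $\boldsymbol{\ell}$ separately on the two Cartesian factors of $\K=\R^n\times\qbb$ and then combine them via a maximum, exploiting the fact that $\bb=A^{-1}$ acts diagonally. On the $\bb$-adic factor $\qbb$ the metric $\mathbf d_\bb$ already does the job: since $\nu(\bb y)=\nu(y)+1$ for all $y$, we have $\mathbf d_\bb(\bb y,\bb y')=b^{-1}\mathbf d_\bb(y,y')$, so $\bb$ is a genuine contraction there with ratio $b^{-1}<1$ (here $b\ge 2$ is used). On the Euclidean factor $\R^n$ the issue is that $A^{-1}$ need not be a contraction in the Euclidean norm itself, only eventually (its spectral radius is $<1$ because $A$ is expanding). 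This is exactly the Lind-type statement: for a matrix whose eigenvalues all lie strictly inside the unit disk, there is an adapted norm in which it becomes a strict contraction.

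First I would record the Euclidean ingredient. Fix $\rho$ with $\operatorname{sp}(A^{-1})<\rho<1$; by the spectral radius formula there is $m\in\N$ with $\|A^{-m}\|_{\mathrm{op}}<\rho^m$ in the operator norm induced by $\|\cdot\|$. Then define the adapted norm
\[
\|x\|_{*}:=\sum_{j=0}^{m-1}\rho^{-j}\,\|A^{-j}x\|,
\]
which is a genuine norm on $\R^n$ (finite positive-definite sum of norms), and a direct telescoping computation gives $\|A^{-1}x\|_{*}\le \rho\,\|x\|_{*}$ for all $x$: the $j=1,\dots,m-1$ terms of $\|A^{-1}x\|_{*}$ are $\rho$ times the $j=0,\dots,m-2$ terms of $\|x\|_{*}$, and the leftover term $\rho^{-(m-1)}\|A^{-m}x\|$ is bounded by $\rho^{-(m-1)}\rho^m\|x\|=\rho\cdot\rho^{-(m-1)}\|A^{-(m-1)}x\|\cdot(\|A^{-m}x\|/(\rho^m\|A^{-(m-1)}x\|))$, i.e.\ after regrouping it is $\le\rho$ times the remaining $j=m-1$ term of $\|x\|_*$ — this elementary estimate I would not write out in full. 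Let $\boldsymbol{\ell}_{\R}(x,x'):=\|x-x'\|_{*}$.

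Then I would set
\[
\boldsymbol{\ell}((x,y),(x',y')):=\max\{\boldsymbol{\ell}_{\R}(x,x'),\,\mathbf d_\bb(y,y')\}
\]
and take $\kappa:=\max\{\rho,\,b^{-1}\}<1$. Since $\bb$ acts coordinatewise, $\boldsymbol{\ell}(\bb(x,y),\bb(x',y'))=\max\{\|A^{-1}(x-x')\|_{*},\mathbf d_\bb(\bb(y-y'))\}\le\max\{\rho\,\boldsymbol{\ell}_{\R}(x,x'),b^{-1}\mathbf d_\bb(y,y')\}\le\kappa\,\boldsymbol{\ell}((x,y),(x',y'))$, which is \eqref{eq:elldef}. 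It remains to note that $\boldsymbol{\ell}$ is indeed a metric on $\K$: the maximum of two metrics is a metric, and $\|\cdot\|_{*}$ is equivalent to $\|\cdot\|$ (all norms on $\R^n$ are), so $\boldsymbol{\ell}$ induces the same topology as $\mathbf d$, keeping us within the locally compact group $\K$ already fixed.

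The only real obstacle is the Euclidean contraction, i.e.\ producing the adapted norm $\|\cdot\|_{*}$; everything on the solenoidal factor is immediate from the definition of $\nu$, and the gluing step is formal. I would therefore state the adapted-norm fact as the crux (citing Lind~\cite{MR684244} for the analogous phenomenon) and give the short explicit construction above rather than invoking it as a black box.
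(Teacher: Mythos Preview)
Your proposal is correct and follows essentially the same approach as the paper: build an adapted norm on $\R^n$ in which $A^{-1}$ contracts, observe that $\mathbf d_\bb$ already contracts by the factor $b^{-1}$ on $\qbb$, and take the maximum, obtaining $\kappa=\max\{\rho,b^{-1}\}$. The only cosmetic difference is that the paper uses the infinite-sum adapted norm $\|x\|':=\sum_{k\ge 0}\rho^k\|A^{-k}x\|$ (with $\rho>1$ chosen below the smallest eigenvalue modulus of $A$) rather than your finite-sum variant; both are standard and yield the same conclusion---though note your telescoping sketch has the indices swapped (the leftover $j=m-1$ term of $\|A^{-1}x\|_*$ should be compared to $\rho$ times the $j=0$ term $\|x\|$ of $\|x\|_*$, not the $j=m-1$ term), so write the two lines out cleanly if you keep that version.
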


\begin{proof}
	Let $\mathop{Spec}(A)$ denote the set of eigenvalues of $A$.  Since $A$ is expanding, there exists $\rho\in\R$ such that $1<\rho<\min\{|\eta|\;|\;\eta\in\mathop{Spec}(A)\}$. For $x\in\R^n$, define 
\begin{equation}\label{eq:xprimedef}
\|x\|':=\sum_{k=0}^{\infty}\rho^k\, \|\bb^{k}x\|.
\end{equation}
	Since all the eigenvalues of $\rho\, \bb = \rho A^{-1}$ are strictly smaller than $1$ in modulus, the series on the right hand side of \eqref{eq:xprimedef} converges and $\|\cdot\|'$ becomes a norm in $\R^n$ that satisfies
	$$\|\bb \,x\|'=\frac{1}{\rho}\sum_{k=1}^{\infty}\rho^k\, \|\bb^{k}\,x\|\leqslant	\frac1\rho \, \|x\|'.$$
	Also, for all $y,y'\in\qbb$, it follows from the definition of the $\bb$-adic metric that ${\bf d}_\bb(By,By')=\frac{1}{b}\, {\bf d}_\bb(y,y').$ Let $(x,y),(x',y')\in\K$. Define on $\K$ the metric $\boldsymbol{\ell}$ given by
$$\boldsymbol{\ell}((x,y),(x',y')):=\mbox{max}\{\|x-x'\|',{\bf d}_\bb(y,y')\}.
$$
Then \eqref{eq:elldef} follows with $\kappa:=\max\{\frac1\rho,\frac{1}{b}\}<1$.
\end{proof}
 Note that ${\bf d}$ and $\boldsymbol{\ell}$ are equivalent, because $\|\cdot\|'$ is equivalent to $\|\cdot\|$.
  
We now introduce a suitable way to embed our digit system into the representation space $\K$. Define the {\it diagonal embedding} $\varphi$ as
\[
\varphi: \Z^n(B)\rightarrow\K;\quad
x\mapsto(x,x).
\] 	
	
\begin{definition}[Rational self-affine tile]
Let $(A,\D)$ be a digit system.
Define $\F=\F(A,\D)\subset\K$ as the unique nonempty compact set satisfying the set equation	
\begin{equation} \label{seteq}
A\F=\bigcup_{d\in\D}(\F+\varphi(d)).
\end{equation}
If $\mu(\F)>0$, then $\F$ is called a \textit{rational self-affine tile}.
\end{definition}

Because $A$ is expanding, Lemma~\ref{metricell} implies that the mapping \begin{equation*}
\K \to \K;\quad (x,y) \mapsto A^{-1}((x,y)+\varphi(d))
\end{equation*} 
is a contraction for each $d\in \D$. Let $\mathcal{H}(K)$ be the family of nonempty compact subsets of $\K$, and consider the map 
\begin{equation}\label{mappsi}
\varPsi: \mathcal{H}(K) \to \mathcal{H}(K); \quad X \mapsto \bigcup_{d\in\D}A^{-1} (X+\varphi(d)).
\end{equation} By Hutchinson~\cite{MR625600}, there is a unique nonempty compact set which is a fixed point of $\varPsi$, hence $\F$ is well defined. This set is the attractor of an iterated function system, which means that it is the Hausdorff limit of the sequence of compact sets $\{\varPsi^k(X)\}_{k\geq1}$, for any compact set $X$.

	 The set $\F$ can be interpreted as the set of ``fractional parts" of the digit system $(A,\D)$ embedded in $\K$; that is, every point of $\F$ can be expressed in base $A$ with digits in $\varphi(\D)$ using only negative powers of the base. In fact,  $\F$ is given explicitly by 
\begin{equation}\label{expliciteq}
\F=\Big\{ \sum_{j=1}^\infty A^{-j}\varphi(d_j)\;|\;d_j\in\D \Big\}.\
\end{equation}
Indeed, it is easy to see that $\F$ is nonempty, bounded, and satisfies \eqref{seteq}. The fact that $\F$ is closed follows by a Cantor diagonal argument.
	
Note that $\F$ is {\it self-affine} in the sense that it can be written as the union of $a=|\D|$ contracted affine copies of itself, because \eqref{seteq} is equivalent to $\F=A^{-1} (\F+\varphi(\D))$. When $\F$ has zero measure, it is a (generalization of a) Cantor set. 
	
	Suppose $\F$ has positive measure. In order to define a tiling, we want the union $\bigcup_{d\in\D}(\F+\varphi(d))$ to be essentially disjoint (that is, disjoint up to a $\mu$-measure zero set); since multiplication by $A$ on $\K$ enlarges the measure by a factor of $a$, then it is necessary for $\D$ to have exactly $a$ elements. We will show in the next section that, if $(A,\D)$ is a standard digit system, then $\F(A,\D)$ has positive measure. 

 \begin{remark}\label{remarkwlogdigits}
 	 W.l.o.g.\ we will always assume that $0\in\D$. This can be done because replacing $\D$ by $\D-v$, where $v\in\Z^n[A]$ is a constant vector, means that $\F(A,\D-v)$ is a translation of $\F(A,\D)$, hence it is equivalent when we study the existence of tilings. 
 \end{remark}

	\section{Results on rational self-affine tiles}\label{tiling theorems}

	Let $(A,\D)$ be a digit system and let $\K$ be the representation space with metric $\textbf{d}$ and Haar measure $\mu$ as before. In this section, we give some equivalent topological and combinatorial conditions for the set $\F(A,\D)$ to have positive measure and we prove that, whenever $\F(A,\D)$ is a rational self-affine tile, it induces a tiling. We also study some topological properties and present an example.
	\subsection{Properties of rational self-affine tiles and a tiling theorem}
	We introduce some definitions before stating the results. To denote blocks of digits, let
\begin{equation}\label{defdk}
\D_k:=\big\{d_0 + Ad_1+\dots+A^{k-1}d_{k-1} \;|\; d_0,\ldots,d_{k-1}\in\D \big\}
\quad\hbox{and}
\quad
\D_\infty:=\bigcup_{k\geqslant 1}\D_k.
\end{equation}
From the set equation (\ref{seteq}), we deduce the {\it iterated set equation}
		\begin{equation}\label{iterset}
	A^k\F=\bigcup_{d\in\D_k}(\F+\varphi(d)).
	\end{equation}

	\begin{definition}[Uniform discreteness]
		We say that a set $M\subset\K$ is {\it uniformly discrete} if there exists $r>0$ such that every open ball of radius $r$ in $\K$ contains at most one point of $M$. 
	\end{definition}

		Our first result is a criterion for $\F(A,\D)$ to have positive measure formulated in terms of $\D$. It is an extension of \cite[Theorem~1.1]{MR1399601} and \cite[Theorem 10]{MR1185093} to the case of rational self-affine tiles.

	\begin{theorem}\label{uniformlydiscrete}
		Let $(A,\D)$ be a digit system, and let $\F=\F(A,\D)\subset\K$. Then
		$\F$ has positive measure if and only if for every $k\geqslant1$, all $a^k$ expansions in $\D_k$ are	distinct, and $\varphi(\D_\infty)$ is a uniformly discrete subset of $\K$.
	\end{theorem}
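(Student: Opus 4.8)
The plan is to prove the two directions separately, exploiting the scaling property $\mu(A\F) = a\,\mu(\F)$ from Lemma~\ref{detofA} together with the iterated set equation \eqref{iterset}.

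\textbf{Necessity.} Suppose $\mu(\F) > 0$. For the ``distinctness'' part: from \eqref{iterset} we have $A^k\F = \bigcup_{d\in\D_k}(\F+\varphi(d))$, so by subadditivity $a^k\mu(\F) = \mu(A^k\F) \leqslant |\D_k|\,\mu(\F)$, whence $|\D_k| \geqslant a^k$. Since $\D_k$ is by definition the image of an $a^k$-element index set, equality must hold, i.e.\ all $a^k$ expansions are distinct. For uniform discreteness: first I would fix $\delta > 0$ with $\mu(\F \cap {\bf B}_\delta(z)) \geqslant \tfrac12 \mu(\F)$ impossible for the right reasons — more precisely, I would argue that since $\F$ is compact and has positive measure, there is a point of density, or more simply use that the sets $\F + \varphi(d)$ for $d \in \D_k$ are translates of a fixed positive-measure compact set lying inside the fixed compact set $A^k\F$; but the cleaner route is: the translates $\{\F + \varphi(d) : d \in \D_\infty\}$ all have the same positive measure $\mu(\F)$, and if $\varphi(\D_\infty)$ failed to be uniformly discrete, one could find, for every $r>0$, two distinct $d, d' \in \D_\infty$ with $\textbf{d}(\varphi(d),\varphi(d')) < r$. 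I would then show this forces infinitely many translates $\F + \varphi(d)$ to cluster in a bounded region with controlled total overlap, contradicting a counting/measure bound. Concretely: pick $R$ with $\F \subset {\bf B}_R(0)$; the number of $d \in \D_\infty$ with $\varphi(d) \in {\bf B}_{\rho}(0)$ is at most $\mu({\bf B}_{R+\rho}(0))/\mu(\F) \cdot C$ by a packing argument once essential disjointness is known — but essential disjointness is only available from the standard case, so instead I would run this at the level of a single power $A^k$: the $a^k$ translates $\F + \varphi(d)$, $d \in \D_k$, all sit inside the compact set $A^k\F$, which has measure $a^k\mu(\F)$, and since the total measure of the translates (counted with multiplicity) is exactly $a^k \mu(\F)$, the union must be essentially disjoint, giving a lower bound on pairwise distances within $\varphi(\D_k)$ that is \emph{uniform in $k$} once one tracks how $A^k$ expands distances via the metric $\boldsymbol{\ell}$ of Lemma~\ref{metricell}.

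\textbf{Sufficiency.} Conversely, assume all $\D_k$ have $a^k$ distinct expansions and $\varphi(\D_\infty)$ is uniformly discrete, say with separation constant $r > 0$. The goal is $\mu(\F) > 0$. Here I would use the explicit formula \eqref{expliciteq}: $\F = \{\sum_{j\geqslant1} A^{-j}\varphi(d_j) : d_j \in \D\}$. Fix a large $k$; every point of $\F$ lies within $\boldsymbol{\ell}$-distance $\kappa^k \cdot \mathrm{diam}_{\boldsymbol{\ell}}(\F)$ of some point $A^{-k}\varphi(d)$ with $d \in \D_k$ (truncate the series after $k$ terms), so $\F \subset \bigcup_{d\in\D_k} {\bf B}_{c\kappa^k}(A^{-k}\varphi(d))$ for a suitable constant $c$. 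On the other hand $A^k\F = \bigcup_{d\in\D_k}(\F+\varphi(d))$, so $\F = \bigcup_{d\in\D_k} A^{-k}(\F+\varphi(d)) = \bigcup_{d\in\D_k}(A^{-k}\F + A^{-k}\varphi(d))$, a union of $a^k$ sets. The uniform discreteness of $\varphi(\D_\infty) \supset \varphi(\D_k)$ means the points $\varphi(d)$, $d\in\D_k$, are $r$-separated, hence the points $A^{-k}\varphi(d)$ are $b^{-k}r$-separated in the $B$-adic factor while in the $\R^n$ factor they need not be — this asymmetry is exactly the crux. The right way around it: multiply through by $A^k$ isn't allowed to break discreteness, so instead I bound $\mu(\F)$ from below by observing that $\mu(A^k\F) = a^k\mu(\F)$ and $A^k\F$ contains $a^k$ essentially disjoint translates — but that's circular. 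The clean argument is the standard one (Kenyon, Lagarias–Wang): consider the measure $\mu(\F)$ and use that $\F = \varPsi^k(\F)$ expands by $a^k$ under $A^k$; combined with a bound $\#\{d\in\D_k : (\F+\varphi(d))\cap \F \neq \emptyset\} \leqslant N$ \emph{independent of $k$} (this is where uniform discreteness enters: the neighbours of $\F$ among the $a^k$ translates are $\varphi(d)$ with $\varphi(d)$ in a bounded neighbourhood of $\F - \F$, and by uniform discreteness there are at most $N$ such), one gets $\mu(\F) \geqslant a^k \mu(\F)/N$ is the wrong direction; rather, a Vitali/Besicovitch covering argument shows the $a^k$ translates of $A^{-k}\F$ covering $\F$ have bounded overlap $N$, so $\mu(\F) \leqslant N \cdot a^k \mu(A^{-k}\F) = N\mu(\F)$, consistent, and then the \emph{distinctness} hypothesis promotes this to a genuine lower bound on $\mu(\F)$ via the observation that $\F$ has nonempty interior in an auxiliary sense, or directly: the map sending a digit string to its sum is ``$N$-to-one in measure'', and since there are exactly $a^k$ strings filling out $A^k\F$ which is a fixed compact set, $\mu(\F) = a^{-k}\mu(A^k\F) \geqslant a^{-k}\cdot (\text{measure of }A^k\F) > 0$ provided $A^k\F$ doesn't degenerate — and it cannot degenerate to measure zero for all $k$ because its $\boldsymbol{\ell}$-diameter is bounded below (it contains $\F$ plus $a^k - 1$ other $r$-separated-ish translates, occupying volume $\gtrsim a^k b^{-k} = |\det A|^k$ in a suitable sense... )

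\textbf{Main obstacle.} The genuinely delicate point — and the one I would spend the most care on — is the sufficiency direction: translating ``$\varphi(\D_\infty)$ uniformly discrete'' into a uniform (in $k$) bound on the number of overlapping translates among the $a^k$ pieces $\{\F + \varphi(d) : d \in \D_k\}$, so that the measure cannot be killed in the limit. This requires showing that if $(\F + \varphi(d)) \cap (\F + \varphi(d')) \neq \emptyset$ then $\varphi(d) - \varphi(d') = \varphi(d-d')$ lies in the compact set $\F - \F$, and then invoking that a uniformly discrete set meets a fixed compact set in boundedly many points — but one must check $d - d' \in \Z^n[A]$ and that $\varphi$ behaves well under subtraction, and that $\F - \F$ is indeed compact (true, as $\F$ is compact and subtraction is continuous on the topological group $\K$). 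The necessity direction's uniform-discreteness claim has a symmetric subtlety: extracting a separation constant independent of $k$ from essential disjointness at each level $k$, which I would handle by noting that essential disjointness of the $a^k$ translates inside $A^k\F$ (forced by the measure count $a^k\mu(\F) = \mu(A^k\F) \leqslant \sum \mu(\F+\varphi(d)) = a^k\mu(\F)$ with equality) combined with each translate containing a ball of fixed $\boldsymbol{\ell}$-radius (a density point of $\F$, rescaled) yields pairwise $\boldsymbol{\ell}$-distances bounded below by a constant, uniformly in $k$ and in the pair — and $\boldsymbol{\ell}$ is equivalent to $\textbf{d}$, so this gives uniform discreteness of $\varphi(\D_\infty) = \bigcup_k \varphi(\D_k)$.
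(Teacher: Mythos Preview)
Your necessity direction is essentially right: the distinctness argument via $a^k\mu(\F)=\mu(A^k\F)\leqslant |\D_k|\,\mu(\F)$ matches the paper exactly, and your final remark about using a Lebesgue density point of~$\F$ together with essential disjointness of the translates $\F+\varphi(d)$, $d\in\D_k$, is precisely the paper's route to uniform discreteness (the paper makes this precise by showing that $\mu(\F\cap(\F+(x,y)))>0$ for all $(x,y)$ sufficiently close to~$0$, contradicting essential disjointness if $\varphi(d_l)-\varphi(d'_l)\to 0$).

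The sufficiency direction, however, has a genuine gap. Every variant you try --- bounding the number of $d\in\D_k$ with $(\F+\varphi(d))\cap\F\neq\varnothing$, Vitali/Besicovitch overlap counts, arguing that $A^k\F$ ``cannot degenerate'' --- ends up circular or vacuous, and you correctly flag this yourself (``$\mu(\F)\geqslant a^k\mu(\F)/N$ is the wrong direction'', ``$\mu(\F)\leqslant N\mu(\F)$, consistent''). The problem is structural: any inequality you extract from the union $A^k\F=\bigcup_{d\in\D_k}(\F+\varphi(d))$ has $\mu(\F)$ on both sides, because $\mu(A^k\F)=a^k\mu(\F)$. Bounded overlap among translates of~$\F$ cannot, by itself, force $\mu(\F)>0$; a zero-measure $\F$ satisfies every such inequality trivially.

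The missing idea is to replace $\F$ by an object whose positive measure is known \emph{a~priori}. The paper takes a small closed ball ${\bf B}'_\varepsilon(0)$ (in the contracting metric $\boldsymbol{\ell}$) with $\varepsilon$ less than half the separation constant of $\varphi(\D_\infty)$. Then for every $k$ the $a^k$ translates ${\bf B}'_\varepsilon(0)+\varphi(d)$, $d\in\D_k$, are pairwise disjoint, so
\[
\mu\bigl(\varPsi^k({\bf B}'_\varepsilon(0))\bigr)=a^{-k}\sum_{d\in\D_k}\mu\bigl({\bf B}'_\varepsilon(0)+\varphi(d)\bigr)=\mu({\bf B}'_\varepsilon(0)).
\]
Since $\varPsi^k({\bf B}'_\varepsilon(0))\subset\varPsi^k({\bf B}'_r(0))$ for a large ball ${\bf B}'_r(0)$ chosen so that $\varPsi({\bf B}'_r(0))\subset{\bf B}'_r(0)$, and $\mu(\varPsi^k({\bf B}'_r(0)))\to\mu(\F)$ by dominated convergence, one gets $\mu(\F)\geqslant\mu({\bf B}'_\varepsilon(0))>0$. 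The uniform discreteness is used exactly once, to guarantee disjointness of the ball translates; the distinctness hypothesis ensures there really are $a^k$ of them. Your overlap-count machinery is unnecessary once you iterate on a ball rather than on~$\F$.
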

	
	\begin{proof}	
	We omit some details of the proof, since the one in~\cite[p. 32 -- 34]{MR1399601} is similar, although it is provided in the setting of self-affine tiles in $\R^n$.
	
		Assume first that $\varphi(\D_\infty)$ is a uniformly
		discrete set and that all the elements in $\D_k$ are distinct for every $k\geqslant1$. 
			Recall the metric $\boldsymbol{\ell}$ and the constant $0\leqslant\kappa<1$ defined in Lemma $\ref{metricell}$. 
			Then $\boldsymbol{\ell}(\bb\,(x,y),0)\leqslant \kappa\,\boldsymbol{\ell}((x,y),0)$ for every $(x,y)\in\K$. Consider the closed ball ${\bf B}'_r(0):=\{(x,y)\in\K\;|\;\boldsymbol{\ell}((x,y),0)\leqslant r \},$ and let $(x,y)\in {\bf B}'_r(0)$. Let $\varPsi$ be the map defined in (\ref{mappsi}); then by Hutchinson~\cite{MR625600}, $\F$ is the Hausdorff limit of the sequence $\{\varPsi^k({\bf B}'_r(0)) \}_{k\geqslant 1}$.
		Suppose that $r\geqslant\frac{\kappa}{1-\kappa}\max_{d\in\D}\{\boldsymbol{\ell}(\varphi(d),0) \}$; then $\varPsi({\bf B}'_r(0))\subset{\bf B}'_r(0)$.	
		Consequently, by Lebesgue's dominated convergence theorem, we get
	\[\mu(\F)=\lim_{k\to\infty}\mu(\varPsi^k({\bf B}'_r(0))).\] It suffices to find a set of positive measure contained in every $\varPsi^k({\bf B}'_r(0))$. Since $\varphi(\D_\infty)$ is uniformly discrete, there exists
		$\delta>0$ such that for every $(x,y)\neq(x',y')$ in $\varphi(\D_\infty)$ it
		holds that $\boldsymbol{\ell}((x,y),(x',y'))>\delta$. For $0<\varepsilon<\min\{\frac{\delta}{2},r \}$, consider the closed ball ${\bf B}'_\varepsilon(0)$.
		By hypothesis, $\D_k$ has $a^k$ distinct elements for all
		$k$, and all sets of the form ${\bf B}'_\varepsilon(0) +\varphi(d)$ for $d\in\D_k$ are pairwise disjoint. 
		Therefore
		$\mu(\varPsi^k({\bf B}'_\varepsilon(0)))=\mu({\bf B}'_\varepsilon(0))$	 for every $k$,
		and hence		
		\begin{equation*}	
		\mu(\F) \geqslant \lim_{k\to\infty} \mu(\varPsi^k({\bf B}'_\varepsilon(0)))=\mu({\bf B}'_\varepsilon(0))>0.	
		\end{equation*}
		\indent For the converse, assume $\mu(\F)>0$. 
		Then
		\begin{equation*} 
		a^k\mu(\F)  =\mu(A^k\F) =\mu\Big(\bigcup_{d\in\D_k}(\F+\varphi(d))\Big)
		\leqslant	\sum_{d\in\D_k}\mu(\F+\varphi(d))\leqslant a^k\mu(\F),
		\end{equation*}
		so all the terms are equal. Hence, $|\D_k|=a^k$ and the union is essentially disjoint, i.e.,
		for $d\neq d'$ in $\D_k$, we get
		\begin{equation} \label{disjoint}
		\mu((\F+\varphi(d))\cap(\F+\varphi(d')))=0.
		\end{equation}

		It remains to show that $\varphi(\D_\infty)$ is uniformly discrete.	Suppose this was not the case, then we can find a sequence
		$\{(d_l,d'_l)\}_{l\geqslant 1}$ where, $d_l$ and
		$d'_l$ are distinct elements of some $\D_{k_l}$ for each $l\geqslant1$, and such that
		$$\lim_{l\to\infty}\textbf{d}(\varphi(d_l),\varphi(d'_l))=0.$$
		If $\mu(\F)>0$, then by  
		Federer~\cite[page~156, Corollary~2.9.9]{MR0257325},
		there exists a Lebesgue point
		$(x^*,y^*)\in \F$.	Given $\varepsilon>0$, this implies the existence of a sufficiently small $r$ for which \begin{equation}\label{lebesgue0}
		\mu({\bf B}_r(x^*,y^*)\cap
		\F)\geqslant(1-\varepsilon)\,\mu({\bf B}_r(x^*,y^*)).
		\end{equation} Here, ${\bf B}_r(x^*,y^*)$ denotes the closed ball of center $(x^*,y^*)$ and radius $r$ w.r.t. the metric ${\bf d}$. Let $0< \varepsilon'<r$, and consider $(x,y)\in\K$ such that $\textbf{d}((x,y),0)<\varepsilon'<r$. Then
				\begin{equation}\label{lebesgue1}
		\begin{split}
		\mu({\bf B}_r(x^*,y^*)\cap (\F+(x,y))) &
		\geqslant\mu({\bf B}_{r-\varepsilon'}((x^*,y^*)+(x,y))\cap(\F+(x,y)))\\
		&\geqslant(1-\varepsilon)\mu({\bf B}_{r-\varepsilon'}(x^*,y^*)).
		\end{split}
		\end{equation}
		\indent Recall that $b$ satisfies \eqref{measureprop}. Note that ${\bf B}_r(y^*)={\bf B}_{b^{\lfloor \log_br \rfloor}}(y^*)$, and define $K:=\lfloor \log_b(r)\rfloor-\lfloor\log_b(r-\varepsilon') \rfloor>0$. Then $y\in{\bf B}_r(y^*)$ if and only if $\bb^Ky\in{\bf B}_{r-\varepsilon'}(\bb^Ky^*)$. Hence,
		\begin{equation*}
		\mu({\bf B}_{r-\varepsilon'}(x^*,y^*))=\lambda\Big(\frac{r-\varepsilon'}{r}{\bf B}_r(x^*)\Big)\,\mu_\bb(\bb^K{\bf B}_r(y^*))=\Big(\frac{r-\varepsilon'}{r}\Big)^nb^{-K}\mu({\bf B}_{r}(x^*,y^*)),
		\end{equation*}
		and thus for the appropriate value of $\varepsilon''>0$ it follows from \eqref{lebesgue1} that	\begin{equation}\label{epsilonprime}
		\mu({\bf B}_r(x^*,y^*)\cap (\F+(x,y)))>(1-\varepsilon'')	\mu({\bf B}_r(x^*,y^*)).
		\end{equation}		
		By inclusion-exclusion and combining (\ref{lebesgue0}) and (\ref{epsilonprime}), we get
		\begin{equation*}
		\begin{split}
		\mu((\F+(x,y))\cap \F) & >(1-\varepsilon-\varepsilon'')\mu({\bf B}_r(x^*,y^*)>0
		\end{split}
		\end{equation*}		
		for $(x,y)$ sufficiently close to 0. This implies that, for large enough $l$, 
		$$
		\mu((\F+\varphi(d_l))\cap(\F+\varphi(d_l')))= \mu(\F\cap(\F+\varphi(d_l'-d_l)))>0,
		$$ which is a contradiction.            
	\end{proof}

	\begin{corollary}\label{corollarystandard}
		If $(A,\D)$ is a standard digit system, then $\F(A,\D)$ has positive measure.
	\end{corollary}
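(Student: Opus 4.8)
The plan is to deduce Corollary~\ref{corollarystandard} from Theorem~\ref{uniformlydiscrete} by verifying its two hypotheses when $\D$ is a complete residue system for $\Z^n[A]/A\Z^n[A]$. So I must show that (i) for every $k\geqslant 1$ the $a^k$ expressions in $\D_k$ are pairwise distinct, and (ii) $\varphi(\D_\infty)$ is uniformly discrete in $\K$.

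For (i), I would argue by induction on $k$. The standard-digit hypothesis says precisely that the map $\D\to\Z^n[A]/A\Z^n[A]$ is a bijection. Suppose $d_0+Ad_1+\dots+A^{k-1}d_{k-1}=d_0'+Ad_1'+\dots+A^{k-1}d_{k-1}'$ with all digits in $\D$. Reducing modulo $A\Z^n[A]$ forces $d_0\equiv d_0'$, hence $d_0=d_0'$ by bijectivity; subtracting and applying $A^{-1}$ (note $A^{-1}(A\Z^n[A])\subseteq\Z^n[A]$ since $\Z^n[A]$ is $A$-invariant and... more carefully, $d_1+Ad_2+\dots = d_1'+Ad_2'+\dots$ holds in $\Z^n[A]$ after cancelling the common factor $A$, which is legitimate because $A$ is invertible over $\Q$ and both sides lie in $\Z^n[A]$) reduces to the case $k-1$. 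This gives $d_j=d_j'$ for all $j$, so $|\D_k|=a^k$.

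For (ii), the key point is that $\varphi(\D_\infty)\subset\varphi(\Z^n[A])$ and that $\varphi(\Z^n[A])$ is uniformly discrete in $\K$; uniform discreteness is inherited by subsets. To see that $\varphi(\Z^n[A])$ is uniformly discrete, take any nonzero $v\in\Z^n[A]$ and estimate $\textbf{d}(\varphi(v),0)=\max\{\|v\|,{\bf d}_\bb(v,0)\}$ from below by a positive constant independent of $v$. Indeed $v\in\Z^n[A]=\Z^n[B]$, so writing $v$ in terms of the valuation $\nu$: if $\nu(v)\geqslant 0$ then $v\in\Z^n[B]$ has ${\bf d}_\bb(v,0)=b^{-\nu(v)}$, which can be small, so in that case I instead bound $\|v\|$ from below — here I use that $v\in\Z^n[B]\cap\R^n$-image; actually the cleanest route is: $v\in\Z^n[A]$ means $v=A^{-k}w$ for some $w\in\sum_{j=0}^{2k}A^j\Z^n\subseteq\Z^n[A]$, hmm, this needs care. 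The robust argument is that $\varphi(\Z^n[A])$ is a \emph{discrete} subgroup of the locally compact group $\K$: it suffices to show it is discrete at $0$, i.e.\ that $0$ is isolated in $\varphi(\Z^n[A])$. A neighborhood of $0$ in $\K$ of the form $\{\|x\|<\tfrac12\}\times\zbb$ meets $\varphi(\Z^n[A])$ only in those $v\in\Z^n[A]$ with $\|v\|<\tfrac12$ and $\nu(v)\geqslant 0$, i.e.\ $v\in\Z^n[B]$; but $\Z^n[B]\cap\{\,v\in\Z^n[A]: \nu(v)\ge 0\,\}$ — combined with the fact that $v$ lies in a lattice scaled appropriately — one shows this set is contained in $\Z^n$, and the only element of $\Z^n$ with norm $<\tfrac12$ is $0$. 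Since $\varphi$ is a group homomorphism, discreteness at $0$ gives uniform discreteness on all of $\varphi(\Z^n[A])$ (translate a fixed neighborhood), and hence on $\varphi(\D_\infty)$.

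The main obstacle I expect is pinning down the claim in step (ii) that an element of $\Z^n[A]$ which simultaneously has small Euclidean norm and nonnegative $\bb$-adic valuation must actually lie in $\Z^n$ (or in some fixed lattice), which is the arithmetic heart of why the diagonal embedding produces a discrete image — this is the exact analogue of $\Z$ being discrete in $\R\times\Q_b$ via the diagonal embedding (strong approximation / product formula), and it will need the structure of $\Z^n[A]$ and $\Z^n((B))$ developed earlier, e.g.\ the relation between $\nu$ and denominators. Once that lattice-type claim is established, both hypotheses of Theorem~\ref{uniformlydiscrete} hold and the corollary follows immediately. I would also remark that, by Remark~\ref{remarkwlogdigits}, assuming $0\in\D$ loses no generality, which makes the induction in step (i) slightly cleaner.
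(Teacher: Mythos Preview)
Your approach is correct and is exactly what the paper intends: the corollary is stated without proof, as an immediate consequence of Theorem~\ref{uniformlydiscrete}, and your verification of condition~(i) is the standard induction. For condition~(ii), the uniform discreteness of $\varphi(\Z^n[A])$ that you are working towards is precisely Proposition~\ref{lattice} of the paper (proved later, via Lemma~\ref{fin}); in particular, the ``arithmetic heart'' you isolate --- that a nonzero $v\in\Z^n[A]$ with $\nu(v)\geqslant 0$ lies in a fixed lattice in $\R^n$ --- is exactly Lemma~\ref{lattice2}, which says that $\Z^n[A]\cap\Z^n[B]$ is a lattice (not $\Z^n$ itself, as you first guessed, but this is harmless). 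So there is no real gap, only a forward reference.
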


The second result of this section gives some topological equivalences for $\F$ being a rational self-affine tile. It is in the spirit of \cite[Theorem~1.1]{MR1399601}.

	\begin{theorem}
		Let $(A,\D)$ be a digit system and let $\F=\F(A,\D)\subset\K$. 
		The following assertions are equivalent:
		\begin{itemize}
			\item [$(i)$] $\F$ has positive measure.
			\item [$(ii)$] $\F$ has nonempty interior.
			\item [$(iii)$] $\F$ is the closure of its interior, and its boundary $\partial \F$ has measure zero.
		\end{itemize}
	\end{theorem}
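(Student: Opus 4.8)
The plan is to prove the cycle of implications $(ii)\Rightarrow(i)\Rightarrow(iii)\Rightarrow(ii)$, since $(iii)\Rightarrow(ii)\Rightarrow(i)$ are either trivial or will have been handled by Theorem~\ref{uniformlydiscrete} together with general measure theory, while the substantive content is $(i)\Rightarrow(iii)$. The implication $(ii)\Rightarrow(i)$ is immediate because a nonempty open set in the locally compact group $\K$ has positive Haar measure. The implication $(iii)\Rightarrow(ii)$ is trivial. So the heart of the argument is $(i)\Rightarrow(iii)$, and the key tool will be the iterated set equation \eqref{iterset} combined with the fact (from the proof of Theorem~\ref{uniformlydiscrete}) that when $\mu(\F)>0$ the union in \eqref{iterset} is essentially disjoint, i.e.\ $\mu((\F+\varphi(d))\cap(\F+\varphi(d')))=0$ for $d\neq d'$ in $\D_k$.

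First I would establish $(i)\Rightarrow(ii)$ as a preliminary step (it follows from Theorem~\ref{uniformlydiscrete} plus a Lebesgue-density argument, exactly as in the converse direction of that theorem's proof: a Lebesgue point of $\F$ together with the essential disjointness forces a small ball to be essentially covered by a single translate $\F+\varphi(d)$, and by scaling by $A^k$ one inflates this to show $\F$ contains a ball up to measure zero — then since $\F$ is closed and the "almost ball" has to sit inside it, one gets genuine interior points). Actually the cleanest route: let $(x^*,y^*)$ be a Lebesgue point of $\F$; pick $r$ small with $\mu(\mathbf{B}_r(x^*,y^*)\cap\F)>(1-\varepsilon)\mu(\mathbf{B}_r(x^*,y^*))$ for $\varepsilon$ tiny; then apply $A^k$ and use \eqref{iterset} plus essential disjointness to conclude that some translate $\F+\varphi(d)$ contains $A^k\mathbf{B}_r(x^*,y^*)=\mathbf{B}_{?}$ up to a set of measure $<\mu(\text{ball})$; since $\F$ is closed this forces the open ball to lie in $\F+\varphi(d)$, giving nonempty interior.

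For $(i)\Rightarrow(iii)$: write $\mathcal{U}$ for the interior of $\F$ and $V=\F\setminus\overline{\mathcal{U}}$. One must show $\mu(\partial\F)=0$, equivalently $\mu(\F)=\mu(\mathcal{U})$, equivalently $\mu(V)=0$ and $\mu(\partial\mathcal{U})=0$ — I would phrase it as: $\mu(\F\setminus\mathcal{U})=0$. Using \eqref{iterset}, $A^k\F$ is the essentially disjoint union of the $a^k$ translates $\F+\varphi(d)$, $d\in\D_k$; applying the same decomposition to each occurrence of $\F$ inside the interior, $A^k\mathcal{U}\supseteq\bigcup_{d\in\D_k}(\mathcal{U}+\varphi(d))$ with the latter union still essentially disjoint and each piece open. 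Then $\mu(A^k\F)-\mu(A^k\mathcal{U})\leqslant\sum_{d\in\D_k}\mu(\F\setminus\mathcal{U})=a^k(\mu(\F)-\mu(\mathcal{U}))$, while by Lemma~\ref{detofA} the left side is $a^k(\mu(\F)-\mu(\mathcal{U}))$ — this gives no contradiction directly, so instead I would argue via the density/inflation trick: if $\mu(\F\setminus\overline{\mathcal{U}})>0$ pick a Lebesgue point of $\F\setminus\overline{\mathcal{U}}$ inside $\F$, inflate by $A^k$ using \eqref{iterset} and essential disjointness to show that a whole ball is essentially contained in a single $\F+\varphi(d)$, hence that ball (up to measure zero) lies in the interior, contradicting that we started near a point not in $\overline{\mathcal{U}}$ (one has to carry the point along: the Lebesgue point $z$ has $A^kz$ still a Lebesgue point of $A^k\F$, landing in some $\F+\varphi(d)$, and pulling back the newly-found interior shows $z$ was an interior point of $\F$ up to a null set — the measure-zero slack is absorbed since interior is an open condition). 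This shows $\mu(\F)=\mu(\overline{\mathcal{U}})=\mu(\mathcal{U})$, so $\partial\F=\F\setminus\mathcal{U}$ (using $\mathcal{U}\subseteq\F$ closed) has measure zero, and $\F=\overline{\mathcal{U}}$ because $\F\setminus\overline{\mathcal{U}}$ is open of measure zero hence empty.

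The main obstacle I anticipate is the bookkeeping in the inflation argument over the $\bb$-adic factor: balls in $\K$ have the product form $\mathbf{B}_r(x)\times\mathbf{B}_r(y)$, and multiplication by $A^k$ scales the Euclidean radius multiplicatively but the $\bb$-adic "radius" only along the discrete value scale $b^{-\nu}$, so matching radii after applying $A^k$ requires the careful $K:=\lfloor\log_b r\rfloor-\lfloor\log_b(r-\varepsilon')\rfloor$ type adjustment already used in the proof of Theorem~\ref{uniformlydiscrete}; I would reuse that computation verbatim. A secondary subtlety is ensuring that "contained up to measure zero in an open set" upgrades to "contained in the open set" — this is where closedness of $\F$ and openness of $\mathcal{U}$ are used, and one must be slightly careful that the null set one discards does not obstruct the topological conclusion, which it does not because the interior of a set is unchanged by removing a nowhere dense (here, null and relatively closed) piece in this self-similar situation. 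Given the authors' remark that they follow \cite{MR1399601} closely, I expect the write-up to be short and to lean on the already-established Theorem~\ref{uniformlydiscrete} and Lemma~\ref{detofA}.
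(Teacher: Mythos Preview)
Your overall cycle matches the paper's (the substantive direction is $(i)\Rightarrow(ii)\Rightarrow(iii)$), but your sketch of $(i)\Rightarrow(ii)$ has a genuine gap. The step ``$\F$ is closed and $\mu(B\setminus\F)<\varepsilon\,\mu(B)$, hence the open ball $B$ lies in $\F$'' is false for any $\varepsilon>0$: a fat Cantor set has density arbitrarily close to $1$ in small balls yet empty interior. Lebesgue density gives only $\varepsilon_k\to 0$ together with $r_k\to 0$, never $\varepsilon=0$ on a fixed ball, so closedness buys you nothing here. The paper's route is more delicate: from a Lebesgue point one first proves (Claim~1, via a Vitali-type covering adapted to the product metric on $\K$) that for every $k$ there is a \emph{unit} ball with density $\geqslant 1-C\varepsilon_k$ in $A^{l_k}\F=\bigcup_{d\in\D_{l_k}}(\F+\varphi(d))$. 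Uniform discreteness of $\varphi(\D_\infty)$ (from Theorem~\ref{uniformlydiscrete}) then bounds, uniformly in $k$, the number of translates $\F+\varphi(d)$ meeting this unit ball; a compactness argument extracts a \emph{finite} limit set $\mathcal{V}$ with $\F+\mathcal{V}\supset{\bf B}_1(0)$, and Baire's theorem gives interior. The Baire step is not optional---it is precisely what converts ``density $\to 1$'' into ``nonempty interior''.

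Your treatment of $(ii)\Rightarrow(iii)$ also slips at the end: $\F\setminus\overline{\mathcal U}$ is \emph{not} open ($\F$ is closed, not open), so measure zero does not force it to be empty. The standard argument (to which the paper defers, citing \cite{MR1399601}) is topological rather than density-based: once a ball $B\subset\mathcal U$ exists, writing $\F=\bigcup_{d\in\D_k}A^{-k}(\F+\varphi(d))$ shows that every point of $\F$ lies within $\operatorname{diam}(A^{-k}\F)\to 0$ of an open set $A^{-k}(B+\varphi(d))\subset\mathcal U$, hence $\F=\overline{\mathcal U}$ directly; $\mu(\partial\F)=0$ is then obtained separately from the essential disjointness.
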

	
			\begin{proof}
			$(iii)\Rightarrow(ii)\Rightarrow(i)$ is trivial.
			
			\medskip
			
			$(ii)\Rightarrow(iii)$ is analogous to the proof found in \cite[p. 30 -- 32]{MR1399601}. 
			\medskip

			$(i)\Rightarrow(ii):$ 
		Because $\F$ has positive measure, it has a Lebesgue point $(x^*,y^*)$ satisfying (\ref{lebesgue0}). 
			This implies that we can consider a sequence $\varepsilon_k\searrow0$ together with a sequence of radii $r_k\searrow0$ such that, for every $l>0$,
			\begin{equation}\label{lebesguepoint}\mu(A^l({\bf B}_{r_k}(x^*,y^*)\cap \F))\geqslant (1-\varepsilon_k)\,\mu(A^l{\bf B}_{r_k}(x^*,y^*)).
			\end{equation}

\noindent{\bf Claim~1}: {\it For every index $k$ there exists a large enough $l_k>0$ and $(u^{(k)},v^{(k)})\in \K$  such that ${\bf B}_1(u^{(k)},v^{(k)})\subset A^{l_k}{\bf B}_{r_k}(x^*,y^*)$ with
				\[\mu({\bf B}_{1}(u^{(k)},v^{(k)})\cap A^{l_k}\F)\geqslant (1-C\,\varepsilon_k)\,\mu({\bf B}_{1}(u^{(k)},v^{(k)})),
				\]
where $(x^*,y^*)$ is a Lebesgue point of $\F$ and $C>0$ is a constant depending only on the space $\K$.}

To prove the claim, we draw on ideas from \cite[p.~35]{MR1399601}. With a slight abuse of notation, we will use ${\bf B}_r(\cdot)$ to denote closed balls of radius $r$ in each respective space. Fix $k$, and note that, since $A$ is expanding, $A^{l}{\bf B}_{r_k}(x^*)\subset \R^n$ is an ellipsoid whose shortest axis' length goes to infinity as $l$ goes to infinity. Consider $l_k>0$ large enough so that $b^{-l_k}\leqslant r_k$, and such that the ellipsoid $A^{l_k}{\bf B}_{r_k}(x^*)$ has a shortest axis greater than $4$. Define $E_{k}:=\{x\in A^{l_k}{\bf B}_{r_k}(x^*)\;|\;d(x,\partial( A^{l_k}{\bf B}_{r_k}(x^*)))\geqslant 1 \}$ as the set of points of $A^{l_k}{\bf B}_{r_k}(x^*)$ whose (Euclidean) distance from the boundary is at least $1$. Consider the set $2E_k-x^*$, obtained by doubling $E_k$ and centering around $x^*$. Then $E_k\subsetneq A^{l_k}{\bf B}_{r_k}(x^*)\subsetneq 2E_k-x^*$ and $\lambda(A^{l_k}{\bf B}_{r_k}(x^*))\leqslant \lambda(2E_k-x^*)= 2^n \lambda(E_{k}).$ 

Consider the compact subset of $\K$ given by $\mathcal{U}_k:=E_{k}\times A^{l_k} {\bf B}_{r_k}(y^*)$, which is trivially covered by the collection of unit balls $\mathcal{G}:=\{{\bf B}_1(u,v)\;|\;(u,v)\in\mathcal{U}_k \}.$ Let $\{{\bf B}_1(u_1,v_1),\ldots,{\bf B}_1(u_s,v_s) \}$ be a maximal disjoint subcollection of $\mathcal{G}$. Then $\mathcal{U}_k\subset \bigcup_{j=1}^{s}{\bf B}_2(u_j,v_j)$ because of the following reason: let $(x,y)\in\mathcal{U}_k$. If ${\bf B}_1(x,y)\notin\mathcal{G}$, then by maximality there exists $j\in\{1,\dots,s\}$ with ${\bf B}_1(x,y)\cap {\bf B}_1(u_j,v_j)\neq\varnothing$. Take $(x',y')\in{\bf B}_1(x,y)\cap {\bf B}_1(u_j,v_j)$. Then
				\[\textbf{d}((x,y),(u_j,v_j))\leqslant\textbf{d}((x,y),(x',y'))+\textbf{d}((x',y'),(u_j,v_j))\leqslant 2.
				\]
				This yields \begin{equation}\label{ballsineq}
				\begin{split} \mu(A^{l_k}{\bf B}_{r_k}(x^*,y^*))&=\lambda(A^{l_k}{\bf B}_{r_k}(x^*))\,\mu_\bb(A^{l_k}{\bf B}_{r_k}(y^*))\\
				& \leqslant 2^n\lambda(E_{k})\,\mu_\bb(A^{l_k}{\bf B}_{r_k}(y^*))\\
				&=2^n\mu(\mathcal{U}_k)\leqslant 2^n\sum_{j=1}^{s}\mu({\bf B}_2(u_j,v_j)).
				\end{split}
				\end{equation}
				
				Note that $\lambda({\bf B}_2(u_j))=2^n\lambda({\bf B}_1(u_j))$ while $\mu_\bb({\bf B}_2(v_j))=b^{\lfloor \log_b2 \rfloor}\mu_\bb({\bf B}_1(v_j));$
				hence
				\[\sum_{j=1}^{s}\mu({\bf B}_2(u_j,v_j))=2^{n}b^{\lfloor \log_b2 \rfloor}\mu\Big(\bigsqcup_{j=1}^{s}{\bf B}_1(u_j,v_j)\Big)
				\]
				because the unit balls are disjoint. Combining this with \eqref{ballsineq}, we have
				\begin{equation}\label{eq:ballest1}
				\mu(A^{l_k}{\bf B}_{r_k}(x^*,y^*))\leqslant C \, \mu\Big(\bigsqcup_{j=1}^{s}{\bf B}_1(u_j,v_j)\Big)
				\end{equation}
				for $C:=4^nb^{\lfloor \log_b2 \rfloor}$. We show next that all the balls ${\bf B}_1(u_j,v_j)={\bf B}_1(u_j)\times {\bf B}_1(v_j)$ are contained in $A^{l_k}{\bf B}_{r_k}(x^*,y^*)$. Fix $j\in\{1,\dots,s\}$. For the real part, $u_j\in E_{k}$, which means that it is a point of $A^{l_k}{\bf B}_{r_k}(x^*)$ which is at distance at least one from its boundary, hence ${\bf B}_1(u_j)\subset A^{l_k}{\bf B}_{r_k}(x^*)$. For the $\bb$-adic part, consider $y\in {\bf B}_1(v_j)$ and recall that $A^{-l_k}v_j\in {\bf B}_{r_k}(y^*)$. From the ultrametric inequality it follows
				\begin{equation}
				\begin{split}
				{\bf d}_\bb(A^{-l_k}y,y^*)&\leqslant \max\{{\bf d}_\bb(A^{-l_k}y,A^{-l_k}v_j),{\bf d}_\bb(A^{-l_k}v_j,y^*)\}\\
				&= \max\{b^{-l_k}{\bf d}_\bb(y,v_j),r_k \}\\
				&\leqslant\max\{b^{-l_k},r_k \}=r_k,
				\end{split}
				\end{equation}
				since we assumed $b^{-l_k}\leqslant r_k$. Therefore, $A^{-l_k}y\in {\bf B}_{r_k}(y^*)$ for every $y\in{\bf B}_1(v_j)$, and so ${\bf B}_1(v_j)\subset A^{l_k}{\bf B}_{r_k}(y^*)$. From \eqref{lebesguepoint} it follows that
				\begin{equation}\label{lebesguepoint2}\mu(A^{l_k}{\bf B}_{r_k}(x^*,y^*)\setminus (A^{l_k}{\bf B}_{r_k}(x^*,y^*)\cap A^{l_k}\F))\leqslant \varepsilon_k\mu(A^{l_k}{\bf B}_{r_k}(x^*,y^*)).
				\end{equation}
				Equations \eqref{eq:ballest1} and \eqref{lebesguepoint2} and the fact that $\bigsqcup_{j=1}^{s}{\bf B}_1(u_j,v_j) \subset A^{l_k}{\bf B}_{r_k}(x^*,y^*)$ imply
				\[\mu\Big(\bigsqcup_{j=1}^{s}{\bf B}_1(u_j,v_j)\setminus\Big(\bigsqcup_{j=1}^{s}{\bf B}_1(u_j,v_j)\cap A^{l_k}\F\Big)\Big)\leqslant \varepsilon_k\,C\,\mu\Big(\bigsqcup_{j=1}^{s}{\bf B}_1(u_j,v_j)\Big).
				\]
				Since the balls ${\bf B}_1(u_j,v_j)$ are pairwise disjoint and contained in $A^{l_k}{\bf B}_{r_k}(x^*,y^*)$, then for at least one $j_k\in\{1,\dots,s \}$ it holds that
				\[\mu({\bf B}_1(u_{j_k},v_{j_k})\cap A^{l_k}\F)\leqslant (1-\varepsilon_k\,C)\mu({\bf B}_1(u_{j_k},v_{j_k})),
				\]
				which yields Claim~1 with $(u^{(k)},v^{(k)})=(u_{j_k},v_{j_k})$. 
				
			Back to the main proof, Claim~1 together with the iterated set equation~\eqref{iterset} implies that, for every $k$, there exists $l_k>0$ and $(u^{(k)},v^{(k)})\in\K$ such that
			\begin{equation}\label{disjointineq1}
			\mu\Big({\bf B}_1(u^{(k)},v^{(k)})\cap \Big(\bigcup_{d\in\D_{l_k}}\F+\varphi(d)\Big) \Big)\leqslant (1-\varepsilon_k\,C)\mu({\bf B}_1(u^{(k)},v^{(k)})).
			\end{equation}
			Define the finite sets
			\[\mathcal{V}_k:=\{ \varphi(d)-(u^{(k)},v^{(k)})\;|\;d\in\D_{l_k},\;(\F+\varphi(d)-(u^{(k)},v^{(k)}))\cap \F\neq \varnothing \}.
			\]
			Then shifting the arguments inside the measures in \eqref{disjointineq1} by $-(u^{(k)},v^{(k)})$ and restricting to translates contained in $\mathcal{V}_k$ yields
			\[\mu({\bf B}_1(0)\cap (\F+\mathcal{V}_k ))\leqslant (1-\varepsilon_k\,C)\mu({\bf B}_1(0)).
			\]
			
			Note that $\F\cap {\bf B}_1(e)\neq\varnothing$ for every $e\in\mathcal{V}_k$. Thus, because $\F$ is bounded, all $\mathcal{V}_k\subset {\bf B}_R(0)$ for a sufficiently large constant $R$. Recall that $\varphi(\D_\infty)$ is a uniformly discrete set by Theorem
			\ref{uniformlydiscrete},
			and $\D_{l_k}\subset\D_\infty$,
			hence there exists $\delta>0$
			such that $\textbf{d}(e,e')\geqslant\delta$ 
			for every $e,e'\in\mathcal{V}_k$ for every $k$. This implies that the sequence of cardinalities $\{|\mathcal{V}_k|\}_{k\geqslant 1}$ is bounded.  Therefore, $\{\mathcal{V}_k\}_{k\geqslant1}$ has a convergent subsequence $\{\mathcal{V}_{k_j}\}_{j\geqslant 1}$ whose limit, denoted by $\mathcal{V}$, is a finite set. Then
			\begin{equation}
			\begin{split}
			\mu({\bf B}_1(0)\cap (\F+\mathcal{V}))&\geqslant \liminf_{j\to\infty}\mu({\bf B}_1(0)\cap (\F+\mathcal{V}_{k_j} ))\\
			&\geqslant \liminf_{j\to\infty} (1-C\,\varepsilon_{k_j})\mu({\bf B}_1(0))=\mu({\bf B}_1(0)).
			\end{split}
			\end{equation}
			Because $T$ is closed this implies that $(\F+\mathcal{V})\cap {\bf B}_1(0)={\bf B}_1(0)$. Thus $\F+\mathcal{V}$ is a finite union of translates of the compact set $\F$ containing inner points. Baire's theorem implies that $\F$ has nonempty interior.		\end{proof}
	
In what follows, we will restrict ourselves to the case where $\F$ has positive measure. We have referred to $\F$ in this case as a tile, because we will show that there exists a \textit{tiling} of the space $\K$ by translates of $\F$. 

\begin{definition}[Tiling, self-replicating tiling, multiple tiling]\label{deftiling}
	Assume $\mu(\F)>0$. Let $\mathcal{S}\subset\K$ and consider the collection $\{\F+s\;|\;s\in\mathcal{S}\}$, which we denote as $\F+\mathcal{S}$ with a slight abuse of notation. 
	\begin{enumerate}
		\item $\F+\mathcal{S}$ is said to be a \textit{tiling} of $\K$ if it is a covering of $\K$ such that, for any $s\neq s'$ in $\mathcal{S}$, it holds that $\mu((\F+s)\cap(\F+s'))=0$, or, equivalently, if $\F+s$ and $\F+s'$ have disjoint interiors. $\mathcal{S}$ is called a tiling set for $\F$. We say that $\F+\mathcal{S}$ tiles $\K$.
		\item 	 $\F+\mathcal{S}$ is said to be a \textit{self-replicating tiling} if there exists an expanding linear map $T$ on $\K$ such that,
		for each $s\in\mathcal{S}$, there exists a finite subset $J(s)\subset\mathcal{S}$ with
		$$T(\F+s)=\bigcup_{s'\in J(s) }(\F+s').$$
		\item 	$\F+\mathcal{S}$ is said to be a \textit{multiple tiling} of $\K$, if there exists $k\ge 1$ such that $\mu$-almost every point of $\K$ is contained in exactly $k$ distinct sets of the form $\F+s$ with $s\in\mathcal{S}$.

	\end{enumerate}

\end{definition}

	It follows that a self-replicating tiling is completely determined by the set of tiles that touch the origin 0. We call a self-replicating tiling atomic if the origin touches exactly one tile.
	
	
	We want to study the nature of the tilings of $\K$ obtained using self-affine tiles. For any $k\geqslant 1$, consider the difference sets
	
	$$\D_k-\D_k=\{d-d'\;|\;d,d'\in\D_k \},$$
	

	and define $$\Delta:=\bigcup_{k=1}^{\infty}\varphi(\D_k-\D_k).$$

	\begin{theorem}
		Suppose that $\F$ contains an open set. Then:
		
		\begin{enumerate}
			\item [$(i)$] There exists a set of translations $\mathcal{S}\subset\Delta$ such that $\F+\mathcal{S}$ tiles $\K$. Furthermore, there exists a translate $\mathcal{S}'$ of $\mathcal{S}$ such that $\F+\mathcal{S}'$ is an atomic self-replicating tiling of $\K$, and the expanding linear map associated to it is of the form $T=A^k$ for some sufficiently large $k$.
			\item [$(ii)$] If $\Delta$ is a group, then $\F+\Delta$ is a tiling.
		\end{enumerate}
	\end{theorem}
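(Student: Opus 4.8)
The plan is to prove (i) by writing $\K$ as an increasing union of finite ``patches'' of translates of $\F$, seeded at a periodic point of the iterated function system~\eqref{mappsi} lying in the interior of $\F$, and then to deduce (ii) from (i) together with Theorem~\ref{uniformlydiscrete}. I use freely that, by the preceding theorem, $\F$ is the closure of its interior with $\mu(\partial\F)=0$, and that, $\F$ being the attractor of~\eqref{mappsi}, the periodic points of that IFS are dense in $\F$. By Remark~\ref{remarkwlogdigits} assume $0\in\D$, so $0\in\F$, $\D_k\subseteq\D_{k+1}$, and the sets $A^k\F$ increase.

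Fix $k$ large enough that~\eqref{mappsi} has a fixed point of a $k$-fold composition lying in $\operatorname{int}\F$ (possible since the periodic points of~\eqref{mappsi} are dense in $\F$ and $\operatorname{int}\F\neq\varnothing$); it has the form $p_0=\sum_{m\geqslant1}A^{-mk}\varphi(w)$ for some $w\in\D_k$. Put $c_{mk}:=\sum_{i=0}^{m-1}A^{ik}w\in\D_{mk}$ and $\mathcal{S}:=\bigcup_{m\geqslant1}\varphi(\D_{mk}-c_{mk})$. Since $c_{mk}\in\D_{mk}$ we get $0\in\mathcal{S}$ and $\mathcal{S}\subseteq\bigcup_m\varphi(\D_{mk}-\D_{mk})\subseteq\Delta$. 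From the iterated set equation~\eqref{iterset}, $\F+\varphi(\D_{mk}-c_{mk})=A^{mk}\F-\varphi(c_{mk})$; these sets are nested, because $d-c_{mk}=(d+A^{mk}w)-c_{(m+1)k}$ with $d+A^{mk}w\in\D_{(m+1)k}$; and a short computation gives $A^{mk}p_0-\varphi(c_{mk})=p_0$. Hence, choosing $\varepsilon>0$ with ${\bf B}_\varepsilon(p_0)\subseteq\operatorname{int}\F$, we have $A^{mk}\F-\varphi(c_{mk})\supseteq p_0+A^{mk}{\bf B}_\varepsilon(0)$, and as $m\to\infty$ both the Euclidean and the $\bb$-adic radii of $A^{mk}{\bf B}_\varepsilon(0)$ tend to infinity; therefore $\F+\mathcal{S}=\bigcup_m\big(A^{mk}\F-\varphi(c_{mk})\big)=\K$. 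Moreover $\F+\mathcal{S}$ is measure-disjoint: for $s\neq s'$ in $\mathcal{S}$ the nesting lets us write $s=\varphi(d)-\varphi(c_{Mk})$, $s'=\varphi(d')-\varphi(c_{Mk})$ with $d\neq d'$ in a common $\D_{Mk}$, so $\mu((\F+s)\cap(\F+s'))=\mu((\F+\varphi(d))\cap(\F+\varphi(d')))=0$ by Theorem~\ref{uniformlydiscrete}. This proves the first assertion of (i).

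Now set $\mathcal{S}':=\mathcal{S}-p_0$, a translate of $\mathcal{S}$, so that $\F+\mathcal{S}'$ is again a tiling. It is \emph{atomic}: since $p_0\in\operatorname{int}\F$ we have $0\in\operatorname{int}(\F-p_0)$, and for $t\in\mathcal{S}\setminus\{0\}$ we have $0\notin\F+(t-p_0)$ because $p_0\notin\F+t$ — indeed, since $\F+\mathcal{S}$ is a tiling and $\F$ is the closure of its interior, $\operatorname{int}\F\cap(\F+t)=\varnothing$ (otherwise a small ball inside $\operatorname{int}\F$ would meet $\operatorname{int}(\F+t)$, forcing $\mu(\F\cap(\F+t))>0$). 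It is \emph{self-replicating} with $T=A^k$: for $s=\varphi(d-c_{mk})-p_0\in\mathcal{S}'$ $(d\in\D_{mk})$, using $A^kc_{mk}=c_{(m+1)k}-w$ and $A^kp_0=\varphi(w)+p_0$ one finds
\[
A^k(\F+s)=\bigcup_{e\in\D_k}\Big(\F+\varphi\big(e+A^kd-c_{(m+1)k}\big)-p_0\Big),
\]
and $e+A^kd\in\D_{(m+1)k}$ because the digit blocks do not overlap, so the right-hand side is a union of finitely many tiles of $\F+\mathcal{S}'$. (The cancellation of the term $w$ is exactly why the translation by $-p_0$ is the right one.) Thus $\F+\mathcal{S}'$ is an atomic self-replicating tiling of $\K$ with inflation $T=A^k$, completing (i).

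For (ii), assume $\Delta$ is a group. As $\mathcal{S}\subseteq\Delta$ and $\F+\mathcal{S}=\K$, the family $\F+\Delta$ covers $\K$. For $\delta\neq\delta'$ in $\Delta$ we have $\delta'-\delta\in\Delta\setminus\{0\}$ (here the group property is used), so $\delta'-\delta=\varphi(d)-\varphi(d')$ for some $d\neq d'$ in a common $\D_j$; translating by $-\delta$ and then by $\varphi(d')$, and using that $\bigcup_{d\in\D_j}(\F+\varphi(d))=A^j\F$ is essentially disjoint by Theorem~\ref{uniformlydiscrete} (valid since $\mu(\F)>0$), we obtain $\mu((\F+\delta)\cap(\F+\delta'))=\mu((\F+\varphi(d'))\cap(\F+\varphi(d)))=0$. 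Hence $\F+\Delta$ is a tiling. The main difficulty in the whole argument is the construction for (i): making the patches exhaust all of $\K$, which is why one seeds them with an interior periodic point $p_0$ (forcing $k$ to be large), and realising that translating by precisely $-p_0$ simultaneously produces atomicity and self-replication with map $A^k$; given (i), part (ii) is short.
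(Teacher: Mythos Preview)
Your proof is correct and follows the same approach as the paper. For part (i) the paper simply refers to \cite[Theorem~1.2]{MR1399601}, and your explicit construction---seeding nested inflations $A^{mk}\F-\varphi(c_{mk})$ at a periodic interior point $p_0$ of the IFS, then translating by $-p_0$ to obtain atomicity and self-replication with map $A^k$---is exactly the Lagarias--Wang argument carried over to $\K$; for part (ii) your argument is verbatim the paper's (covering from $\mathcal{S}\subset\Delta$, essential disjointness from \eqref{disjoint}).
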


\begin{proof}
	$(i)$ The proof is analogous to the one in \cite[Theorem~1.2]{MR1399601}.

				\medskip
	$(ii)$ Assume that $\Delta$ is a group. Since by $(i)$ there is a subset $\mathcal{S}\subset \Delta$ for which $\F+\mathcal{S}$ tiles $\K$, $\F+\Delta$ is a covering of $\K$. Given any $s\neq s'$ in $\Delta$, then it suffices to show $$0=\mu((\F+s)\cap(\F+s'))=\mu(\F\cap(\F+s'-s)).$$	Because $\Delta$ is a group, then $s'-s\in\Delta$. That means that there exists $k\geqslant 1$ and $d\neq d'$ in $\D_k$ such that $s'-s=\varphi(d-d')$, so the assertion is equivalent to $$\mu((\F+\varphi(d))\cap(\F+\varphi(d')))=0,$$which holds by (\ref{disjoint}).	
\end{proof}

	\subsection{Example}
We give now an example of a standard and a nonstandard digit system, together with an illustration of a rational self-affine tile.

\begin{example}

	Let 
	\[
	A = \begin{pmatrix}
	2 & 1\\
	0 & \frac53
	\end{pmatrix}\in\Q^{2\times2}.
	\]

	Since its characteristic polynomial is $\chi_A(x)=(x-2)(x-\tfrac53)$, $A$ is expanding. 		We proceed to find the representation space $\K$. 
	We have
	\[
	\bb=A^{-1} =\begin{pmatrix}
	\frac12 & -\frac{3}{10}\\
	0 & \frac35
	\end{pmatrix},
	\]
	and we get
	\[\Z^2[\bb ]=\left\lbrace \begin{pmatrix}
	\frac{s}{2^n5^m}\\
	\frac{t}{5^l}
	\end{pmatrix}\;|\;n,m,l\in\N,\;s,t\in\Z\right\rbrace \]
%
	and \[
	\bb \Z^2[\bb ]=\left\{\begin{pmatrix}
	\frac{s}{2^n5^m}\\
	\frac{3t}{5^l}
	\end{pmatrix}\;|\;l,m,n\in\N,\;s,t\in\Z
	\right\}.
	\]
	Then, a residue set  $\E$ for the quotient $\Z^n[B]/B\Z^n[B]$ is given by
	$$\E=\left\lbrace \begin{pmatrix}
	0\\
	0
	\end{pmatrix},\begin{pmatrix}
	0\\
	1
	\end{pmatrix},\begin{pmatrix}
	0\\
	2
	\end{pmatrix} \right\rbrace, $$
	hence $b=3$.
%
	We will consider a digit set $\widetilde{\D}$ and in Figure~\ref{tile2}, we will illustrate a rational self-affine tile associated to $(A,\widetilde{\D})$, and represent it in $\R^3$ by embedding $\R^2\times \qbb$ in $\R^3$.

	We will first find a standard digit system $(A,\D)$, and for that we compute the quotient $\Z^2[A]/A\Z^2[A]$. 
	%
	Powers of $A$ are of the form

	\[
	A^k = \begin{pmatrix}
	2^k & c_k\\
	0 & (\frac53)^k
	\end{pmatrix}\in\Q^{2\times2},
	\]	where $c_k=\sum_{i,j:\,i+j=k-1}{2^i\left( \frac53\right) ^j}.$ We have	\[
	\Z^2[A]=\left\{\begin{pmatrix}
	\frac{s}{3^n}\\
	\frac{t}{3^m}
	\end{pmatrix}\;|\;n,m\in\N,\;s,t\in\Z 
	\right\},
	\]
	hence, an element of $A	\Z^2[A]$ is of the form

	\[
	\begin{pmatrix}
	2 & 1\\
	0 & \frac53
	\end{pmatrix}\begin{pmatrix}
	\frac{s}{3^n}\\
	\frac{t}{3^m}
	\end{pmatrix}=\begin{pmatrix}
	\frac{2s}{3^n}+\frac{t}{3^m}\\
	\frac{5t}{3^{m+1}}
	\end{pmatrix}.
	\]
	
	Note that $\frac{2s}{3^n}+\frac{t}{3^m}\equiv\frac{5t}{3^{m+1}}\mod2
	$		in $\Q$, because if we multiply both sides by $3^{n+m +1}$ we get $3^{m+1}2s+3^{n+1}t\equiv t \mod2$, and $3^{n}5t\equiv t\mod 2$. This yields
	\[
	A\Z^2[A]= \left\{\begin{pmatrix}
	\frac{s}{3^n}\\
	\frac{5t}{3^m}
	\end{pmatrix}\;|\;n,m\in\N,\;s,t\in\Z,\;s\equiv t\mod2
	\right\}.
	\]
	
	A complete set of residue class representatives of $\Z^2[A]/A\Z^2[A]$ is given by $$
	\D=\left\lbrace \begin{pmatrix}
	0\\
	0
	\end{pmatrix},\begin{pmatrix}
	0\\
	1
	\end{pmatrix},\begin{pmatrix}
	0\\
	2
	\end{pmatrix},\begin{pmatrix}
	0\\
	3
	\end{pmatrix},\begin{pmatrix}
	0\\
	9
	\end{pmatrix},\begin{pmatrix}
	1\\
	0
	\end{pmatrix},\begin{pmatrix}
	1\\
	1
	\end{pmatrix},\begin{pmatrix}
	1\\
	2
	\end{pmatrix},\begin{pmatrix}
	1\\
	3
	\end{pmatrix},\begin{pmatrix}
	1\\
	9
	\end{pmatrix}		
	\right\rbrace ,
	$$		and so $(A,\D)$ is a standard digit system. Note that  $a=10$ and so $|\det A|=\frac{10}{3}=\frac{a}{b}$, as expected.

	Next, we want to find a nonstandard digit system. Note that we can write $\mathcal{D}=\mathcal{R}_1+\mathcal{R}_2$, where \[\mathcal{R}_1=\left\lbrace \begin{pmatrix}
	0\\
	0
	\end{pmatrix},\begin{pmatrix}
	0\\
	1
	\end{pmatrix},\begin{pmatrix}
	0\\
	2
	\end{pmatrix},\begin{pmatrix}
	0\\
	3
	\end{pmatrix},\begin{pmatrix}
	0\\
	9
	\end{pmatrix}
	\right\rbrace, \quad\mathcal{R}_2=\left\lbrace\begin{pmatrix}
	0\\
	0
	\end{pmatrix},\begin{pmatrix}
	1\\
	0
	\end{pmatrix}
	\right\rbrace ,
	\] and the decomposition of the digits as a sum is unique.
	Consider 
	\[\widetilde{\D}:=\mathcal{R}_1+A\mathcal{R}_2=\left\lbrace \begin{pmatrix}
	0\\
	0
	\end{pmatrix},\begin{pmatrix}
	0\\
	1
	\end{pmatrix},\begin{pmatrix}
	0\\
	2
	\end{pmatrix},\begin{pmatrix}
	0\\
	3
	\end{pmatrix},\begin{pmatrix}
	0\\
	9
	\end{pmatrix},\begin{pmatrix}
	2\\
	0
	\end{pmatrix},\begin{pmatrix}
	2\\
	1
	\end{pmatrix},\begin{pmatrix}
	2\\
	2
	\end{pmatrix},\begin{pmatrix}
	2\\
	3
	\end{pmatrix},\begin{pmatrix}
	2\\
	9
	\end{pmatrix}		
	\right\rbrace,\] which is not a residue set for $\Z^n[A] \mod A$. We show that $\F(A,\widetilde{\D})$ has positive measure. 		\begin{figure}[h]
		\centering\includegraphics*[width=0.6\linewidth]{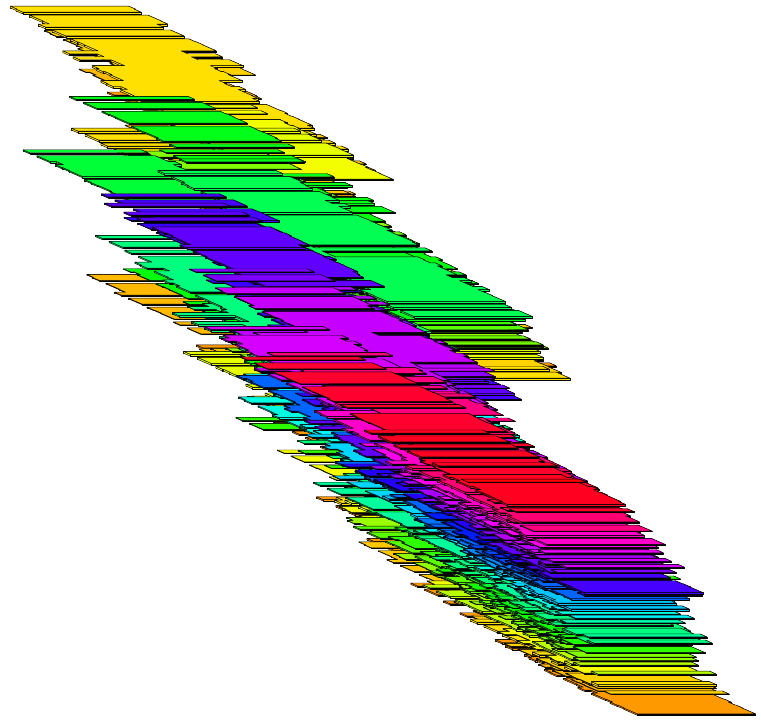}
		\caption{The tile $\mathcal{F}(A,\widetilde{\D}).$}
		\label{tile2}
	\end{figure}
	Let \[E_1:=\Big\{\sum_{j=1}^{\infty}A^{-j}\varphi(e_{j})\;|\;e_j\in\mathcal{R}_1\Big\}
\quad\mbox{ and }\quad
	E_2:=\Big\{\sum_{j=1}^{\infty}A^{-j}\varphi(e'_{j})\;|\;e'_j\in\mathcal{R}_2\Big\}.
	\]
	Now, the unique decomposition of the elements of $\widetilde{\D}$ yields that 
	\[\begin{split}
	\F(A,\widetilde{\D})&=E_1+A\,E_2=E_1+\Big\{\sum_{j=0}^{\infty}A^{-j}\varphi(e'_{j+1})\;|\;e'_j\in\mathcal{R}_2\Big\}\\
	&=E_1+E_2+\varphi(\mathcal{R}_2)=\F(A,\D)+\varphi(\mathcal{R}_2).\\
	\end{split}
	\]
	
	Since $(A,\D)$ is a standard digit system, the set $\F(A,\D)$ has positive measure (see Corollary~\ref{corollarystandard}), and hence so does $\F(A,\widetilde{\D})$, which means that it is a rational self-affine tile associated to a nonstandard digit system. We illustrate it in Figure \ref{tile2}. Recall that the representation space is not Euclidean, so we embedded the points of $\K$ into $\R^3$ in order to draw the picture. We did that so the figure still reflects some of the properties of the set, but it is not a completely faithful representation, since this is not possible due to the $\bb$-adic factor.

\end{example}

\section{Characters and multiple tiling}

The main result of this section states that, whenever $\F(A,\D)$ is a tile, it gives a multiple tiling of $\K$. Before arriving to the proof, we introduce some definitions in order to find the Pontryagin dual of $\K$, and we give a complete description of its characters. We make use of the character theory of locally compact abelian groups to show that the multiplication by $A$ is ergodic on a certain torus in $\K$, and use this to prove the existence of the multiple tiling.

\subsection{Some basic results and definitions}

	The module $\Z^n[A]$ plays a principal role in the study of tilings by rational self-affine tiles. We will prove first that, embedded into the representation space $\K$, this module becomes a lattice, and we show later that it is a translation set for a multiple tiling given by copies of $\F$. First, we formalize the notion of lattice in our setting. 
	
\begin{definition}[Lattice]
		A subset $\Lambda$ of $\K$ is a {\em lattice} in $\K$ if it satisfies the three following conditions:
	\begin{enumerate}
		\item $\Lambda$ is a group.
		\item $\Lambda$ is uniformly discrete, i.e., there exists $r>0$ such that every open ball of radius $r$ in $\K$ contains at most one point of $\Lambda$.
		\item $\Lambda$ is relatively dense, i.e., there exists $R>0$ such that every closed ball of radius $R$ in $\K$ contains at least one point of $\Lambda$.
	\end{enumerate}
\end{definition}

We show next that $\varphi(\Z^n[A])$ satisfies these properties. We state a lemma first.
		
	\begin{lemma}\label{fin}
		There exists an integer $K \geqslant 1$ such that
		\begin{equation}			\label{indexK}
		\Z^n \cap \bb \Z^n[\bb] = \Z^n \cap \left(\bb\Z^n + \bb^{2}\Z^n+\dots+\bb^{K}\Z^n\right).
		\end{equation}
	\end{lemma}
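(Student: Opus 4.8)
The plan is to show that the increasing chain of subgroups
\[
G_K := \Z^n \cap \left(\bb\Z^n + \bb^2\Z^n + \dots + \bb^K\Z^n\right), \qquad K \geqslant 1,
\]
stabilizes. Each $G_K$ is a subgroup of $\Z^n$, and $G_1 \subseteq G_2 \subseteq \cdots$, with union equal to $\Z^n \cap \bb\Z^n[\bb]$ by the definition of $\Z^n[\bb] = \bigcup_{k\geqslant 1}(\Z^n + \bb\Z^n + \dots + \bb^{k-1}\Z^n)$ and hence $\bb\Z^n[\bb] = \bigcup_{k\geqslant 1}(\bb\Z^n + \dots + \bb^k\Z^n)$. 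Since $\Z^n$ is a Noetherian $\Z$-module (it is finitely generated over the Noetherian ring $\Z$), every ascending chain of submodules stabilizes; thus there is a $K$ with $G_K = G_{K+1} = \cdots$, and for that $K$ we get $G_K = \bigcup_{j} G_j = \Z^n \cap \bb\Z^n[\bb]$, which is exactly \eqref{indexK}.

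First I would record the identity $\bb\Z^n[\bb] = \bigcup_{k\geqslant 1}(\bb\Z^n + \bb^2\Z^n + \dots + \bb^k\Z^n)$, which follows directly from multiplying the defining union for $\Z^n[\bb]$ by $\bb$. Intersecting with $\Z^n$ and using that intersection with a fixed subgroup commutes with directed unions gives $\Z^n \cap \bb\Z^n[\bb] = \bigcup_{K\geqslant 1} G_K$. Then I would invoke the Noetherian property of $\Z^n$: the chain $(G_K)_{K\geqslant 1}$ is ascending, so it is eventually constant, say $G_K = G_{K+m}$ for all $m \geqslant 0$. Combining the two observations yields $\Z^n \cap \bb\Z^n[\bb] = G_K$, which is the claimed equality.

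There is essentially no serious obstacle here; the only point requiring a moment's care is the interchange of the intersection $\Z^n \cap (-)$ with the directed union, but this is immediate since $x \in \Z^n \cap \bigcup_K (\bb\Z^n+\dots+\bb^K\Z^n)$ means $x \in \Z^n$ and $x$ lies in one of the finite-level sums, i.e.\ $x \in G_K$ for some $K$. One could alternatively phrase the finiteness via the fact that $\Z^n\cap \bb\Z^n[\bb]$ is itself a subgroup of $\Z^n$, hence finitely generated, and each of its (finitely many) generators lies in some $G_{K_i}$; taking $K = \max_i K_i$ works. Either route gives the statement, and I would present the Noetherian-chain argument as the cleanest.
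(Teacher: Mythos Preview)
Your proof is correct and follows essentially the same ascending-chain argument as the paper: both show that the subgroups $G_K=\Z^n\cap(\bb\Z^n+\cdots+\bb^K\Z^n)$ of $\Z^n$ form an increasing chain that stabilizes, with union $\Z^n\cap\bb\Z^n[\bb]$. The only cosmetic difference is that the paper justifies stabilization by first observing each $G_K$ has finite index in $\Z^n$ (so the indices form a non-increasing sequence of positive integers), whereas you invoke the Noetherian property of $\Z^n$ directly; your version is in fact slightly cleaner.
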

	
	\begin{proof}
%
%
%
%
	For $k \geqslant 1$,  define the lattices (in $\R^n$)
			\[
			\ll_k[\bb] := \sum_{j=1}^{k}\bb^{j}\Z^n \subset \R^n.
			\]
			
			Since $\ll_k[\bb]$ contains $\bb\Z^n$ and $B$ is invertible, the lattice $\ll_k[\bb] \subset \R^n$ has full rank. Consider a nonzero integer $m_k$ such that  $m_k\ll_k[\bb] \subset \Z^n$. 
			Then $m_k\ll_k[\bb]$ has finite index in~$\Z^n$. 
			From this fact, one deduces that the intersection  $ \Z^n \cap \ll_k[\bb]$
			has finite index in~$\Z^n$ for every $k\geqslant1$. 
			Therefore, the chain of nested  lattices
			\[
			\left(\Z^n \cap \ll_1[\bb]\right) \subset \left(\Z^n \cap \ll_2[\bb] \right)\subset \dots \subset \left(\Z^n \cap \bb \Z^n[\bb]\right) \subset \Z^n
			\]
			must eventually stabilize after some $K\geqslant1$. 
%
%
%
\end{proof}

	\begin{proposition}\label{lattice}
		The set $\varphi(\Z^n[A])$ is a lattice in~$\K$.
	\end{proposition}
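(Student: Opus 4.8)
The goal is to verify the three lattice axioms for $\varphi(\Z^n[A])\subset\K$. The group property is immediate, since $\varphi$ is additive and $\Z^n[A]$ is a $\Z$-module. For the remaining two properties it is convenient to split $\K=\R^n\times\qbb$ and exploit the ``diagonal'' nature of $\varphi$: the real component of $\varphi(x)$ is $x\in\R^n$ and the $\bb$-adic component is the image of $x$ in $\qbb$. The key point is that $\Z^n$ is mapped \emph{densely} into $\zbb$ with respect to ${\bf d}_\bb$ (indeed $\zbb$ is the completion of $\Z^n(B)$ intersected with $\nu\geqslant 0$, and $\Z^n$ is $\bb$-adically bounded), and $\Z^n$ itself is a full lattice in $\R^n$; Lemma~\ref{fin} will be what lets us control the interaction between these two pictures for the denominators appearing in $\Z^n[A]$.

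\emph{Uniform discreteness.} First I would reduce to showing that there is a neighbourhood of $0$ in $\K$ meeting $\varphi(\Z^n[A])$ only in $0$; then translating by group elements gives uniform discreteness. Take $v\in\Z^n[A]$ with $\varphi(v)$ close to $0$, i.e.\ $\|v\|$ small in $\R^n$ and $\nu(v)$ large in $\qbb$. Writing $v$ with a common denominator, the condition $\nu(v)\geqslant 1$ (say) together with Lemma~\ref{fin} --- which bounds how deep an element of $\Z^n$ can sit in $\bb\Z^n[\bb]$ --- should force $v$, after clearing denominators by a bounded power of $\bb$, to lie in a fixed sublattice; combined with $\|v\|$ small this pins $v=0$. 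The precise mechanism: if $v\in\Z^n[A]=\bigcup_k(\Z^n+\dots+A^{k-1}\Z^n)$ then $v\in A^{k-1}\Z^n[\bb]\cap(\text{something})$, and the chain of lattices $\Z^n\cap\ll_k[\bb]$ stabilizing at $K$ means that belonging arbitrarily deep $\bb$-adically, modulo $\Z^n$, is only possible at $0$.

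\emph{Relative density.} Here I would use that $\varphi(\Z^n)$ already maps onto a dense subset of $\R^n\times\zbb$: given $(x,y)\in\K$, the $\bb$-adic component $y$ can be written as a Laurent series $\sum_{j\geqslant\nu(y)}\bb^j y_j$; multiplying $(x,y)$ by $A^m$ for $m$ large (which is an allowed action of $\Z[A]\subset\Z[A]$ on $\K$) moves it into $\R^n\times\zbb$ up to bounded error, and on $\R^n\times\zbb$ one approximates $y$ by a truncation $\sum_{j=0}^{N}\bb^j y_j\in\Z^n(B)$ and then approximates that element of $\Z^n(B)$ by an element of $\Z^n[A]$ modulo a high power of $\bb$ while keeping the $\R^n$-coordinate within a bounded box using the lattice $\Z^n$. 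Since $A$ acts on $\K$ by scaling $\mu$ by the integer $a$ (Lemma~\ref{detofA}) and is a bijection, pulling back the approximation by $A^{-m}$ shows every point of $\K$ is within a fixed radius $R$ of $\varphi(\Z^n[A])$, i.e.\ the latter is relatively dense.

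\emph{Main obstacle.} I expect the delicate step to be uniform discreteness: one must rule out a sequence of nonzero $v_\ell\in\Z^n[A]$ with $\|v_\ell\|\to 0$ and $\nu(v_\ell)\to\infty$ simultaneously. The Euclidean smallness alone does not help (since $\Z^n[A]$ is dense in $\R^n$), and the $\bb$-adic smallness alone does not help either; it is the \emph{conjunction}, funneled through the finiteness statement of Lemma~\ref{fin}, that does. Concretely I would clear denominators: write $v_\ell=A^{-k}w_\ell$ with $w_\ell\in\Z^n$ for a \emph{bounded} $k$ (using that large $\nu$ means many factors of $\bb=A^{-1}$ can be extracted while staying in $\Z^n[\bb]$, and Lemma~\ref{fin} caps the relevant $k$ by $K$), observe $\|w_\ell\|\to 0$ forces $w_\ell=0$ eventually, hence $v_\ell=0$, a contradiction. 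Making this ``bounded $k$'' claim rigorous --- i.e.\ translating ``$\nu(v)\geqslant N$ for $v\in\Z^n[A]$'' into ``$A^{K}v\in\Z^n$ and $A^K v\in\bb^{N-K}\Z^n[\bb]\cap\Z^n$'' and then invoking \eqref{indexK} --- is the crux, and is exactly why Lemma~\ref{fin} was proved first.
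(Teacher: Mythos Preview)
Your overall strategy is on target --- group property trivial, uniform discreteness via Lemma~\ref{fin}, relative density by approximation --- and matches the paper's. But two of the concrete mechanisms you propose do not work as stated.

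\textbf{Uniform discreteness.} The step ``write $v_\ell=A^{-k}w_\ell$ with $w_\ell\in\Z^n$ for a bounded $k$'' and the reformulation ``$A^{K}v\in\Z^n$'' are not valid for general $v\in\Z^n[A]$: an element $v=\sum_{j=0}^{k}A^{j}z_j$ with $z_j\in\Z^n$ is \emph{not} of the form $\bb^{k}w$ with $w\in\Z^n$, and multiplying by $A^{K}$ does not clear denominators (it introduces more powers of~$A$). What Lemma~\ref{fin} actually buys you is a bound on the \emph{degree} $k$ in a minimal representation $v=\sum_{j=0}^{k}A^{j}z_j$. The argument in the paper runs as follows: if ${\bf d}_\bb(v,0)<1$ then $v\in\bb\Z^n[\bb]$; isolating the top coefficient gives $z_k=A^{-k}v-\sum_{j<k}A^{j-k}z_j\in\Z^n\cap\bb\Z^n[\bb]$; Lemma~\ref{fin} then writes $z_k=\sum_{i=1}^{K}\bb^{i}w_i$, and substituting this back into $v$ produces a representation of degree $k-1$ whenever $k>K$, contradicting minimality. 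Hence $k\leqslant K$, so $v\in\Z^n+A\Z^n+\dots+A^{K}\Z^n$, and the entries of $v$ lie in $\tfrac{1}{m^{K}}\Z$ (with $m$ the l.c.m.\ of the denominators of~$A$), giving $\|v\|\geqslant m^{-K}$.

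\textbf{Relative density.} The multiply-by-$A^{m}$-then-pull-back scheme fails because $\Z^n[A]$ is $A$-invariant but not $A^{-1}$-invariant: after approximating $A^{m}(x,y)$ by $\varphi(z)$ with $z\in\Z^n[A]$, the pullback $A^{-m}z$ need not lie in $\Z^n[A]$, so you have not produced a nearby lattice point. The paper avoids this entirely: given $(x,y)$, take $y':=\{y\}_\bb$, which is a \emph{finite} sum of negative powers of $\bb$ (equivalently positive powers of~$A$) and hence already lies in $\Z^n[A]$, with ${\bf d}_\bb(y,y')\leqslant 1$; then correct the real coordinate by adding an integer vector $y''\in\Z^n$ close to $x-y'$. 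The lattice point is $z=y'+y''\in\Z^n[A]$ and ${\bf d}((x,y),\varphi(z))\leqslant 2$.
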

	
	\begin{proof}
		The fact that $\varphi(\Z^n[A])$ is a group follows from the additive group structure of $\Z^n[A]$ because $\varphi$ is a group homomorphism.
		
		To prove the uniform discreteness of $\varphi(\Z^n[A])$, we claim that there exists $0<r\leqslant1$ such that $\textbf{d}(\varphi(z),0) \geqslant r$ for every nonzero $z \in \Z^n[A]$. If $\textbf{d}(\varphi(z),0) \geqslant 1$ then we are done. 
		Suppose on the contrary that $\textbf{d}(\varphi(z),0) < 1$.  Since $z \in \Z^n[A]$, we can write it as
		\begin{equation}\label{eq_expr3}
		z = \sum_{j=0}^k A^j z_j, \quad z_j \in \Z^n,
		\end{equation} 
		with $z_k\neq0$, and there is a minimal index $k$ with this property. 
		If $z_k \notin \bb \Z^n[\bb ]$, then $\bb^{k}z \in \Z^n[\bb] \setminus \bb \Z^n[\bb]$, so one has $\textbf{d}_\bb(z,0)=b^k\textbf{d}_\bb(\bb^{k}z,0) \geqslant 1$, in contradiction to $\textbf{d}(\varphi(z),0) < 1$.  
		Thus, $z_k \in \Z^n\cap \bb \Z^n[\bb]$. By Lemma \ref{fin}, there are vectors $w_1$, $w_2$, $\dots$, $w_{K} \in \Z^n$ such that
		\[
		z_k = \bb w_1+\bb^{2}w_2+\dots+\bb^{K}w_{K}.
		\]
		We claim that, in such case, one must have $k \leq K$. Suppose that $k > K$. Then
		\[
		z =  A^k z_k + \sum_{j=0}^{k-1} A^j z_j=  \sum_{j=0}^{k-K-1} A^j z_j + \sum_{j=k-K}^{k-1}(z_j + w_{k-j})A^j =  \sum_{j=0}^{k-1} A^j z_j', \quad z_j' \in \Z^n.
		\]
		However, that would contradict the minimality of $k$. Therefore, $k \leq K$. Now, let $m$ denote the least common multiple of the denominators of the entries of $A$. Since $z$ is a sum of integer vectors multiplied by $A^k$, $0 \leqslant k \leqslant K$, the non-zero entries of $z$ are at least $1/m^{K}$ in absolute value. Hence, $\textbf{d}(\varphi(z),0) \geqslant \|z\|\geqslant 1/m^{K} = r$.

		We now turn to the proof of relative denseness. 
		Let $(x, y) \in \K$ be arbitrary. Choose $x' \in \Z^n$ to be the closest integer vector to $x$, so that $\|x - x'\| < 1$, and choose $y'\in \Z^n[A]$ such that ${\bf d}_\bb(y,y') \leqslant 1$ (this holds by taking $y'=\{y\}_\bb$, see Definition \ref{integerfracpart} below). Hence, ${\bf d}((x,y),(x',y'))\leqslant1$. Choose $y''\in\Z^n$ to be the closest integer vector to $x'-y'\in\R^n$. Let $z:=y'+y''\in\Z^n[A]$. Then $\|x'-z\|=\|x'-y'-y''\|<1$. Moreover, ${\bf d}_\bb(z,y')={\bf d}_\bb(y'',0)\leqslant 1$ because $y''\in\Z^n$. Therefore, ${\bf d}((x',y'),\varphi(z))\leqslant1.$ 		
		This yields $\textbf{d}((x, y),\varphi(z)) \leqslant 2$, which proves the Lemma.
	\end{proof}

The next step is to define a space $\qbbdual$ that will be crucial later when we study the characters of $\K$. Prior to that, we prove the following Lemma.

\begin{lemma}\label{lattice2}
	The group $\Z^n[A]\cap\Z^n[\bb]$ is a lattice in $\R^n$.
\end{lemma}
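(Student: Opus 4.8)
The plan is to rerun the uniform-discreteness half of the proof of Proposition~\ref{lattice}, this time using the hypothesis $z\in\Z^n[\bb]$ in place of the smallness condition on the $\bb$-adic coordinate that is used there. In particular, no Frobenius-normal-form reduction is needed for this lemma.

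First I would record the cheap part: $\Lambda:=\Z^n[A]\cap\Z^n[\bb]$ is a subgroup of $(\R^n,+)$, being an intersection of two subgroups, and it contains $\Z^n$ because $\Z^n\subset\Z^n[A]$ and $\Z^n\subset\Z^n[\bb]$ by construction. Hence $\Lambda$ is automatically relatively dense (every closed ball of radius $\sqrt n$ already contains a point of $\Z^n\subset\Lambda$) and of full rank, so the whole content of the lemma is the uniform discreteness of $\Lambda$. For that it is enough to produce an integer $N\geqslant 1$ with $\Lambda\subset\tfrac1N\Z^n$, since then any two distinct points of $\Lambda$ differ by at least $\tfrac1N$ in some coordinate.

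So let $z\in\Lambda$ be nonzero, and write $z=\sum_{j=0}^k A^j z_j$ with $z_j\in\Z^n$, $z_k\neq 0$, and $k$ minimal. If $k=0$ then $z\in\Z^n$ and there is nothing to do, so assume $k\geqslant 1$. I would first observe that $\bb^k z=z_k+\bb z_{k-1}+\dots+\bb^k z_0\equiv z_k\pmod{\bb\Z^n[\bb]}$; thus, were $z_k\notin\bb\Z^n[\bb]$, we would have $\bb^k z\in\Z^n[\bb]\setminus\bb\Z^n[\bb]$, i.e.\ $\nu(\bb^kz)=0$ for the $\bb$-adic valuation of \eqref{eq:nu}, whence $\nu(z)=\nu(\bb^kz)-k=-k<0$ (recall $\nu(\bb y)=\nu(y)+1$), contradicting $z\in\Z^n[\bb]$. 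Therefore $z_k\in\Z^n\cap\bb\Z^n[\bb]$. Now Lemma~\ref{fin} furnishes an integer $K\geqslant 1$, depending only on $A$, and vectors $w_1,\dots,w_K\in\Z^n$ with $z_k=\bb w_1+\bb^2 w_2+\dots+\bb^K w_K$. If we had $k>K$, then
\[
A^k z_k=A^{k-1}w_1+A^{k-2}w_2+\dots+A^{k-K}w_K
\]
would exhibit $A^kz_k$ as a $\Z^n$-linear combination of $A^0,\dots,A^{k-1}$, so that $z\in\Z^n+A\Z^n+\dots+A^{k-1}\Z^n$, contradicting the minimality of $k$. Hence $k\leqslant K$.

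Finally, letting $m$ be the least common multiple of the denominators of the entries of $A$, each $A^j$ with $0\leqslant j\leqslant K$ has entries in $\tfrac1{m^K}\Z$, so $z=\sum_{j=0}^k A^jz_j\in\tfrac1{m^K}\Z^n$; as $z$ was arbitrary in $\Lambda$ this gives $\Lambda\subset\tfrac1{m^K}\Z^n$, i.e.\ $N=m^K$ works, and $\Lambda$ is a lattice in $\R^n$. I do not anticipate a real obstacle: the argument is essentially the one already carried out in Proposition~\ref{lattice}, and the only slightly delicate point is identifying the membership $z\in\Z^n[\bb]$ with the inequality $\nu(z)\geqslant 0$, everything else being the bookkeeping supplied by Lemma~\ref{fin}.
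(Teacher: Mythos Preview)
Your proof is correct and follows essentially the same approach as the paper's: both write $z=\sum_{j=0}^k A^j z_j$ with $k$ minimal, use $z\in\Z^n[\bb]$ to deduce $z_k\in\Z^n\cap\bb\Z^n[\bb]$, invoke Lemma~\ref{fin} to bound $k\leqslant K$, and conclude by sandwiching $\Lambda$ between $\Z^n$ and a larger lattice. Your write-up is a bit more explicit (the valuation argument for $z_k\in\bb\Z^n[\bb]$ and the concrete containment $\Lambda\subset\tfrac1{m^K}\Z^n$), whereas the paper just says ``solving for $z_k$'' and ``contained in a lattice'', but the substance is the same.
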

\begin{proof}

	We show that $\Z^n[A]\cap\Z^n[\bb]\subset\Z^n+A\Z^n+\dots+A^K\Z^n$ for some $K\geqslant 1$.	Let $z \in \Z^n[A] \cap \Z^n[\bb]$. 
	Since $z \in \Z^n[A]$, write $z = \sum_{j=0}^k A^j z_j$, with $z_j \in \Z^n$, $z_k \neq 0$ and $k$ minimal. If $k =0$, then $z \in \Z^n$. Assume $k \geqslant 1$. 
	Since $z \in \Z^n[\bb]$, from solving for $z_k$ it follows that $z_k \in \Z^n \cap \bb\Z^n[\bb]$. 
By Lemma \ref{fin}, one can find vectors $w_1, w_2, \dots, w_{K} \in \Z^n$ such that
%

	\begin{equation}\label{eq_expr2}
	z_k = \bb w_1+\bb^{2}w_2+\dots+\bb^{K}w_{K},
	\end{equation}
	and by proceeding like in the proof of Proposition \ref{lattice}, one shows $k \leqslant K$.
	Therefore the inclusion follows. This implies that $\Z^n[A]\cap\Z^n[\bb]$  is contained in a lattice (in $\R^n$), and also it trivially contains the lattice $\Z^n$. Since it is a group, it is itself a lattice.
\end{proof}
For an arbitrary lattice $\Lambda\subset \R^n$, define its\textit{ dual lattice }by \[
\Lambda^*:=\{x\in\Z^n\;|\;\langle x,z\rangle \in\Z\mbox{ for every }z\in\Lambda \},
\]
where $\langle \cdot,\cdot\rangle$ denotes the usual scalar product in $\R^n$. Denote by ${A^*}$ the transpose of the matrix $A$, and let $\Lambda$ and $\Gamma$ be full rank lattices in $\R^n$ such that
\begin{equation}\label{eq:duallattice}
\Z^n[A]\cap\Z^n[\bb]\subset\Lambda \quad\mbox{ and }\quad \Gamma[{A^*}]\cap\Gamma[{\bb^*}]\subset\Lambda^*\subset\Z^n.
\end{equation}
This is possible because $\Z^n[A]\cap\Z^n[\bb]$ is a lattice in $\R^n$ by Lemma \ref{lattice2}, and the proof that $\Gamma[{A^*}]\cap\Gamma[{\bb^*}]$ is also a lattice is analogous. 

\begin{definition}[$\bb$-adic 
and $\bb^*$-adic 
expansions]
	
Let $\E\subset\Z^n$ be a complete set of residue classes of $\Z^n[\bb ]/\bb\Z^n[\bb]$ with $0\in\E$.
 Then every $y\in \qbb$ has a unique expansion of the form	\begin{equation}\label{badicexp}y=\sum_{j=\nu(y)}^{\infty} {\bb}^{j}y_j,\quad y_j\in\E,
\end{equation}which we call the \textit{$\bb$-adic expansion} of $y$ with coefficients in $\E$. Recall that $\nu(0)=\infty$, so the $\bb$-adic expansion of $0$ is the empty sum.

Let $\bb^*$ denote the transpose of $\bb$. Consider the full rank integer lattice $\Gamma \subset\R^n$ satisfying (\ref{eq:duallattice}) and let $\E^*\subset\Gamma\subset\Z^n$ be a complete set of coset representatives of $\Gamma[\bb^*]/{\bb^*}\Gamma[\bb^*]$ with $0\in\E^*$. Consider the space $\qbbdual$ defined analogously to $\qbb$. Then every $s\in \qbbdual$ has a unique expansion of the form
\begin{equation}\label{bstaradicexp}s=\sum_{j=\nu^*(s)}^{\infty} {\bb^*}^{j}s_j,\quad s_j\in\E^*,
	\end{equation}
	where $\nu^*$ is the valuation in $\qbbdual$ defined in the same way as $\nu$. We call this the \textit{$\bb^*$-adic expansion} of $s$ with coefficients in $\E^*$.
\end{definition}

\begin{definition}[$\bb$-adic and $\bb^*$-adic fractional and integer part]\label{integerfracpart}
Given $y\in\qbb$ with $\bb$-adic expansion \eqref{badicexp}, we define the \textit{$\bb$-adic fractional part} and the \textit{$\bb$-adic integer part} of $y$, respectively, as
\[ \{ y \}_{\bb}:=\sum_{j=\nu(y)}^{-1} {\bb}^{j}y_j,\quad \lfloor y \rfloor_{\bb}:=\sum_{j=0}^{\infty} {\bb}^{j}y_j.
\]
%
%

Given $s\in\qbbdual$ with $\bb^*$-adic expansion \eqref{bstaradicexp}, we define the \textit{$\bb^*$-adic fractional part} and the \textit{$\bb^*$-adic integer part} of $s$, respectively, as
\[ \{ s \}^*_{\bb}:=\sum_{j=\nu^*(s)}^{-1} {\bb^*}^{j}s_j,\quad \lfloor s \rfloor^*_{\bb}:=\sum_{j=0}^{\infty} {\bb^*}^{j}s_j.
\]
 \end{definition}

From here onwards, whenever we have a $\bb$-adic series (resp.\ $\bb^*$-adic series), we assume the coefficients to lie in $\E$ (resp. $\E^*$).

\begin{remark} Recall that $b=|\E|$. We claim that, if $b=1$, then the multiple tiling theorem holds. Note that, in this case, $\det A=a$ is an integer. 
	Indeed, an analogous version of Theorem \ref{multipletilingthm} is proven by Lagarias and Wang in \cite{MR1395075} for integer matrices. However, they show in \cite[Lemma~2.1]{MR1395075} that this results also holds for self-affine tiles associated to expanding real matrices $A\in\R^{n\times n}$ with integer determinant, 
	as long as there exists an $A$-invariant lattice in $\R^n$ containing the difference set $\D-\D$. If $\F(A,\D)$ is a rational self-affine tile and $b=1$, then  $\Z^n[A]\cap\Z^n[\bb]$ is a lattice in $\R^n$ by Lemma \ref{lattice2}, and it is $A$-invariant because $\Z^n[\bb]/\bb\Z^n[\bb]$ is trivial and hence $A\Z^n[\bb]\subset \Z^n[\bb]$. Consider $c\in\Z\setminus\{0\}$ such that $c\D\subset\Z^n$; then $\D-\D\subset\frac{1}{c}(\Z^n[A]\cap\Z^n[\bb])$, which is an $A$-invariant lattice. 
	Therefore, the assumption that $b\geqslant 2$ made in Remark \ref{assumptionforE} also applies to this section.
\end{remark}

\begin{remark}\label{spanlatticeremark} We can assume without loss of generality that $\E$ has a subset  $\{c_1,\dots,c_n \}$ such that the lattice $\varTheta:=\langle c_1,\dots,c_n\rangle_\Z$ has full rank in $\R^n$. To show this, suppose first that $b\geqslant n$. 
	Take a matrix $R\in\Z^{n\times n}$ whose columns are distinct elements $\{c_1,\dots,c_n \}$ of~$\E$. 
	Consider the integer matrix $N(t) := t \bb- R$, where $t \in \N$ is chosen so that $t\bb\in\Z^{n\times n}$. 
	Its determinant is $\det{(N(t))} = \det{(\bb)}\,\det(t\cdot Id-AR) = \det{(\bb)}\,\chi_{AR}(t)$, where  $\chi_{AR}(t) \in \Q[t]$ is the characteristic polynomial of $AR$. 
	For all but finitely many $t \in \N$, it holds that $\chi_{AR}(t) \ne 0$. 
	Hence, we can choose $t$ in a way that the column vectors $\{\widetilde{c_1},\dots,\widetilde{c_n}\}$ of $N(t)$ are linearly independent, and hence they span a full rank integer lattice. Note that $c_j-\widetilde{c_j}\in\bb\Z^n$ for $j=1,\dots,n$, so we can replace each $c_j$ by $\widetilde{c_j}$, and this produces a new residue set $\widetilde{\E}$ with the required property. 
	
	Suppose now that $1<b< n$. Choose $k\geqslant 1$ so that $b^k\geqslant n$. Note that a complete set of residues for $\Z^n[\bb^k]/\bb^k\Z^n[\bb^k]$ is given by $\E+\bb\E+\dots+\bb^{k-1}\E$, so $|\Z^n[\bb^k]/\bb^k\Z^n[\bb^k]|=b^k\geqslant n$. Suppose that we consider the digit system  $(A^k,\D_k)$ with $\D_k$ as in \eqref{defdk}; then $\E_k$ satisfies the assumption of the previous paragraph. Note that from the iterated set equation \eqref{iterset} it follows that $\F(A^k,\D_k)=\F(A,\D)$. Hence, whenever $|\E|< n$ we can work with $(A^k,\D_k)$ instead of $(A,\D)$ and with $\E+\bb\E+\dots+\bb^{k-1}\E$ instead of $\E$.
	\end{remark}
\subsection{Character theory}
In order to prove the multiple tiling theorem, we use some results on the characters of $\K$. For more on the topic of character theory on locally compact abelian groups, we refer the reader to \cite[Chapter 4]{MR1802924}.

\begin{definition}[Character]
	 A character $\chi$ on a locally compact abelian group $G$ is a continuous function $\chi:G\rightarrow \mathbb{S}^1$ such that $\chi(x+ y)=\chi(x)\chi(y)$ for all $x,y\in G$. 
\end{definition}

\begin{lemma}\label{dualproperties}
	The set of all characters of $G$ constitutes a locally compact abelian group (w.r.t.\ the topology induced by the compact-open topology), called the Pontryagin dual of $G$, denoted by $\widehat{G}$. It satisfies the following properties:\begin{enumerate}
		\item The Pontryagin dual of $\widehat{G}$ is isomorphic to $G$.
		\item The Pontryagin dual of the product $G_1\times G_2$ is isomorphic to $\widehat{G}_1\times \widehat{G}_2$ and the characters are of the form $\chi=\chi_1\cdot\chi_2$ with $\chi_1\in\widehat{G_1}$ and $\chi_2\in\widehat{G_2}.$
		\item Given a subgroup $H\subset G$, define the annihilator of $H$ on $G$ as \[\mathop{Ann} (H):=\{\chi\in\widehat{G}\;|\;\chi(H)=1 \}.\]
		Then $\widehat{(G/H)}\simeq\mathop{Ann}(H)$ and $\widehat{H}\simeq\widehat{G}/\mathop{Ann}(H)$.

	\end{enumerate} 
\end{lemma}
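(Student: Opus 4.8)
The final statement to prove is Lemma~\ref{dualproperties}, which collects three standard facts about the Pontryagin dual of a locally compact abelian group: Pontryagin duality (double dual), the dual of a product, and the annihilator exact sequence. Since these are classical theorems of harmonic analysis on LCA groups, the plan is not to reprove them from scratch but to cite an authoritative reference and briefly indicate where each item is found.

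\medskip

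\noindent\textbf{Proof plan for Lemma~\ref{dualproperties}.}

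The plan is to invoke the standard structure theory of locally compact abelian groups, for instance as presented in \cite[Chapter~4]{MR1802924} or in Folland's \emph{A Course in Abstract Harmonic Analysis}. First I would note that for any topological group $G$, the set $\widehat{G}$ of continuous homomorphisms $G\to\mathbb{S}^1$ is a group under pointwise multiplication; equipped with the compact-open topology it becomes a topological group, and when $G$ is locally compact abelian so is $\widehat{G}$. For part~(1), I would cite the Pontryagin--van Kampen duality theorem: the canonical evaluation map $G\to\widehat{\widehat{G}}$, $x\mapsto(\chi\mapsto\chi(x))$, is a topological isomorphism for every LCA group $G$. For part~(2), the isomorphism $\widehat{G_1\times G_2}\simeq\widehat{G_1}\times\widehat{G_2}$ is immediate from the universal property of products: a character of $G_1\times G_2$ restricts to characters $\chi_i$ of each factor via the embeddings $G_i\hookrightarrow G_1\times G_2$, and conversely $(\chi_1,\chi_2)$ determines the character $(x_1,x_2)\mapsto\chi_1(x_1)\chi_2(x_2)$; continuity is clear in both directions, and one checks these maps are mutually inverse group homomorphisms. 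For part~(3), given a closed subgroup $H\subset G$, the annihilator $\mathrm{Ann}(H)$ is a closed subgroup of $\widehat{G}$; restriction of characters gives a surjection $\widehat{G}\to\widehat{H}$ (surjectivity is the nontrivial point, and follows because $\mathbb{S}^1$ is divisible / injective, or from the duality theorem) with kernel exactly $\mathrm{Ann}(H)$, yielding $\widehat{H}\simeq\widehat{G}/\mathrm{Ann}(H)$; dually, characters of $G/H$ correspond precisely to characters of $G$ trivial on $H$, i.e.\ $\widehat{G/H}\simeq\mathrm{Ann}(H)$.

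A technical caveat I would flag: parts (1) and (3) as usually stated require $H$ to be a \emph{closed} subgroup of the LCA group $G$ (otherwise $G/H$ need not be Hausdorff and $\mathrm{Ann}(H)=\mathrm{Ann}(\overline{H})$). In the intended applications in this paper the relevant subgroups --- such as $\varphi(\Z^n[A])$ inside $\K$, which is a lattice hence discrete and closed --- satisfy this, so the lemma is applied correctly, but I would either add the word ``closed'' to the hypothesis on $H$ in part~(3) or remark that it is harmless here.

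The main obstacle, such as it is, is purely expository rather than mathematical: these three statements are genuinely standard (Pontryagin duality, behaviour of $\widehat{\,\cdot\,}$ under products, and the annihilator correspondence), so the ``proof'' is essentially a matter of pointing to the right pages of \cite{MR1802924} and spelling out the explicit form of the characters in part~(2), which is the one piece the paper actually uses in the sequel. I would keep the write-up to a few lines, giving the precise references and the explicit product formula $\chi=\chi_1\cdot\chi_2$, and leave the rest to the cited literature.
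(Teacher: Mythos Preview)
Your proposal is correct and matches the paper's approach exactly: the paper's entire proof is the single line ``See \cite[Chapter~4]{MR1802924}.'' Your sketch is more detailed than what the paper provides, and your remark about needing $H$ closed is a fair observation, but for the purposes of matching the paper you need only the citation.
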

\begin{proof}
	See \cite[Chapter 4]{MR1802924}.
\end{proof}

It is well known that the characters on $\R^n$ are given by
\[
\exp_r:\R^n\rightarrow \mathbb{S}^1;\quad x\mapsto\exp(2\pi i\langle x,r\rangle),
\] where $r\in\R^n$, and $\widehat{\R}^n\simeq\R^n$ via the isomorphism $r\mapsto \exp_r$.

	
	For any $s=\sum_{j=\nu^*(s)}^{\infty} {\bb^*}^{j}s_j\in\qbbdual$ with $s_j\in\E^*$, define the map
	\begin{equation}\label{characters}
	\psi_s:\qbb\rightarrow \mathbb{S}^1;\quad y\mapsto\exp(2\pi i S_s(y)),
	\end{equation}
	where
	\begin{equation}\label{characterfunction}
	S_s(y):=\sum_{j=\nu^*(s)}^{\infty} \langle \{\bb^jy\}_{\bb},s_j\rangle.
	\end{equation}
	The map $S_s$ is well defined because $\{\bb^jy\}_{\bb}=0$ for all but finitely many indices $j$. We show next that this map is indeed a character.

\begin{proposition}\label{multiplicativity}
	For every $s\in\qbbdual$, the map $\psi_s$ defined in (\ref{characters}) is continuous and multiplicative, that is, $\psi_s(y+y')=\psi_s(y)\psi_s(y')$.
\end{proposition}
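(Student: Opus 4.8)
The plan is to prove continuity and multiplicativity separately, reducing both to finite sums where everything is elementary. For continuity, observe that $S_s(y)$ depends on $y$ only through the finitely many fractional parts $\{\bb^j y\}_\bb$ with $\nu^*(s) \le j$ and $\bb^j y \notin \Z^n[\bb]$; more precisely, if $y, y' \in \qbb$ satisfy ${\bf d}_\bb(y,y')$ small enough, then $\bb^j(y-y') \in \Z^n[\bb]$ for all $j$ up to any prescribed bound, so $\{\bb^j y\}_\bb = \{\bb^j y'\}_\bb$ for those $j$. Since the tail of the series $\sum_{j} \langle \{\bb^j y\}_\bb, s_j\rangle$ is eventually zero (uniformly once $j$ exceeds $\nu(y)$, but one must argue this uniformly on a neighbourhood), one gets $S_s(y) \equiv S_s(y') \pmod{\Z}$ for $y'$ close to $y$, hence $\chi_s(y) = \chi_s(y')$. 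In fact $\chi_s$ is locally constant, which gives continuity for free; the one technical point is to check that the index of the last nonzero term of the $\bb$-adic expansion is bounded on each ball, which follows because $\qbb$ is covered by finitely many translates of $B^{-N}\zbb$ on any bounded set and $\nu$ is bounded below there.

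For multiplicativity, the issue is that $\{u+v\}_\bb \ne \{u\}_\bb + \{v\}_\bb$ in general: adding two $\bb$-adic fractional parts produces a ``carry'' into the integer part. The key identity to establish is that for $u,v \in \qbb$,
\[
\{u\}_\bb + \{v\}_\bb = \{u+v\}_\bb + \lfloor \{u\}_\bb + \{v\}_\bb \rfloor_\bb,
\]
where the carry $\lfloor \{u\}_\bb + \{v\}_\bb \rfloor_\bb$ lies in $\Z^n[\bb]$ — indeed it lies in $\Z^n + \bb\Z^n + \cdots + \bb^{m}\Z^n$ for a bounded $m$ depending only on the (bounded) number of negative-index terms involved. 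Applying this with $u = \bb^j y$, $v = \bb^j y'$, we get
\[
S_s(y) + S_s(y') = S_s(y+y') + \sum_{j \ge \nu^*(s)} \big\langle c_j,\, s_j \big\rangle,
\]
where $c_j := \lfloor \{\bb^j y\}_\bb + \{\bb^j y'\}_\bb \rfloor_\bb \in \Z^n[\bb]$. It then suffices to show the correction term $\sum_j \langle c_j, s_j\rangle$ is an integer. This is where the definition of $\E^*$ as coset representatives of $\Gamma[\bb^*]/\bb^*\Gamma[\bb^*]$ inside $\Gamma \subseteq \Z^n$, together with the dual-lattice condition \eqref{eq:duallattice}, must be used: since each $c_j \in \Z^n[\bb]$ and each $s_j \in \E^* \subset \Z^n$, the pairing $\langle c_j, s_j\rangle$ is a priori only rational, but summing the carries telescopes. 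Concretely, I would write $c_j = \sum_{i\le m} \bb^i c_{j,i}$ with $c_{j,i} \in \Z^n$, reindex, and collect, for each power $\bb^\ell$ appearing, a term of the form $\langle w_\ell, s_\ell \rangle$ with $w_\ell \in \Z^n[\bb]\cap \Z^n[A]$ (or in $\Lambda$) paired against $s_\ell$ built from $\E^* \subset \Lambda^*$; the defining property $\langle x, z\rangle \in \Z$ for $x \in \Lambda^*$, $z \in \Lambda$ then forces integrality. Exponentiating via $\exp(2\pi i \cdot)$ kills this integer term and yields $\chi_s(y+y') = \chi_s(y)\chi_s(y')$.

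The main obstacle is the carry analysis: one must simultaneously control \emph{where} the carry $\lfloor \{\bb^j y\}_\bb + \{\bb^j y'\}_\bb\rfloor_\bb$ lands (a bounded-degree polynomial in $\bb$ with integer-vector coefficients, so that only finitely many $j$ contribute and the reindexed sum is finite) and \emph{why} the resulting pairings against the $\bb^*$-adic digits of $s$ sum to an integer. The finiteness is relatively soft — it comes from $\nu(\bb^j y)$ and $\nu(\bb^j y')$ being eventually nonnegative, so $\{\bb^j y\}_\bb = \{\bb^j y'\}_\bb = 0$ for $j$ large. The integrality is the substantive point and is exactly the reason the spaces $\qbb$ and $\qbbdual$ were set up with the dual lattices $\Lambda, \Lambda^*$ satisfying \eqref{eq:duallattice}; I expect the clean way to phrase it is to first prove the identity $S_s(u) + S_s(v) - S_s(u+v) \in \Z$ for the truncated (finite) situation and then pass to the limit using that all sums involved are finite.
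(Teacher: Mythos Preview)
Your overall architecture is right, and you correctly isolate the carry
\[
c_j \;=\; \{\bb^j y\}_\bb + \{\bb^j y'\}_\bb - \{\bb^j(y+y')\}_\bb
\]
as the obstruction to multiplicativity. But the integrality step is overcomplicated, and the proposed ``reindex and telescope'' argument does not work as written: after expanding $c_j=\sum_i \bb^i c_{j,i}$ and regrouping by powers of~$\bb$, there is no reason the resulting coefficients $w_\ell$ land in $\Z^n[A]\cap\Z^n[\bb]$; you are pairing $c_{j,i}\in\Z^n$ against ${\bb^*}^i s_j$, which is not one of the $s_\ell$. The missing (and much simpler) observation is that each $c_j$ is \emph{already} in $\Lambda$. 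Indeed, $c_j$ is a $\Z$-linear combination of three fractional parts, each of which is a finite sum of strictly positive powers of $A$ applied to elements of~$\E\subset\Z^n$; hence $c_j\in\Z^n[A]$. On the other hand, rewriting
\[
c_j \;=\; -\lfloor \bb^j y\rfloor_\bb - \lfloor \bb^j y'\rfloor_\bb + \lfloor \bb^j(y+y')\rfloor_\bb
\]
shows $c_j\in\zbb$, and since $c_j$ is a finite element of $\Z^n(B)$ this gives $c_j\in\Z^n[\bb]$. Thus $c_j\in\Z^n[A]\cap\Z^n[\bb]\subset\Lambda$ by \eqref{eq:duallattice}, and since $s_j\in\E^*\subset\Gamma\subset\Lambda^*$, each term $\langle c_j,s_j\rangle$ is an integer on its own. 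No telescoping is needed; this is exactly the paper's argument.

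For continuity, your direct approach (show $\{\bb^j y\}_\bb=\{\bb^j y'\}_\bb$ for $j\geqslant\nu^*(s)$ when $y,y'$ are close) is correct, since adding an element of $\zbb$ does not change a $\bb$-adic fractional part. The paper instead proves multiplicativity first and then observes $S_s(y-y')=0$ when ${\bf d}_\bb(y,y')\leqslant b^{\nu^*(s)}$, which avoids your worry about bounding ``the index of the last nonzero term'' on a ball: once you know $\chi_s$ is a homomorphism, you only need the trivial fact that $\{\bb^j(y-y')\}_\bb=0$ for $j\geqslant\nu^*(s)$.
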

	\begin{proof}
	Fix $s=\sum_{j=\nu^*(s)}^{\infty} {\bb^*}^{j}s_j\in\qbbdual$ with $s_j\in\E^*$. For the multiplicativity, it suffices to show that $S_s(y+y')=S_s(y) + S_s(y')\mod \Z$. We prove first the following claim: Given $\omega,\omega'\in\qbb$, it holds that
		\begin{equation}\label{eq:difinlambda}
		\{\omega \}_{\bb}+\{\omega' \}_{\bb}-\{\omega+\omega' \}_{\bb}\in\Lambda,
		\end{equation}
		where $\Lambda$ is the lattice in $\R^n$ satisfying (\ref{eq:duallattice}). By definition of the $\bb$-adic fractional part, $\{\omega \}_{\bb}+\{\omega' \}_{\bb}-\{\omega+\omega' \}_{\bb}\in\Z^n[A].$ Also,
		\begin{equation*}
		\begin{split}
			\{\omega \}_{\bb}+\{\omega' \}_{\bb}-\{\omega+\omega' \}_{\bb}&=(\{\omega \}_{\bb}-\omega)+(\{\omega' \}_{\bb}-\omega')+(\omega+\omega'-\{\omega+\omega' \}_{\bb})\\
			&=-\lfloor \omega \rfloor_{\bb}-\lfloor \omega' \rfloor_{\bb}+\lfloor \omega+\omega' \rfloor_{\bb}\in\Z^n[A^{-1}].
		\end{split} 
		\end{equation*}
		Since $\Z^n[A]\cap\Z^n[A^{-1}]\subset\Lambda$ by the definition of $\Lambda$, this yields the claim. 
		Now, for any $y,y'$ we have
		\begin{equation}\label{eq:sumSSS}
		S_s(y)+S_s(y')-S_s(y+y')=\sum_{j=\nu^*(s)}^{\infty} \langle \{\bb^jy\}_{\bb}+\{\bb^jy'\}_{\bb}-\{\bb^j(y+y')\}_{\bb},s_j\rangle
		\end{equation}
		where $\{\bb^jy\}_{\bb}+\{\bb^jy'\}_{\bb}-\{\bb^j(y+y')\}_{\bb}\in\Lambda$ by \eqref{eq:difinlambda}. The summands in \eqref{eq:sumSSS} are nonzero only for a finite number of $j$'s. For every index $j$, we have $s_j\in\E^*\subset\Gamma\subset \Lambda^*$, thus by definition of dual lattice, $$\langle \{\bb^jy\}_{\bb}+\{\bb^jy'\}_{\bb}-\{\bb^j(y+y')\}_{\bb},s_j\rangle\in\Z$$
	and the multiplicativity of $\psi_s$ is established.
		
		 For the continuity, let $y,y'\in\qbb$ such that ${\bf d}_\bb(y,y')\leqslant b^{\nu^*(s)}$. Then, for every $j\geqslant \nu^*(s)$, it holds that ${\bf d}_\bb(\bb^jy,\bb^jy')\leqslant 1$, and so $\bb^j(y-y')\in\zbb$  which implies $\{\bb^j(y-y')\}_\bb=0$. Then $S_s(y-y')=0$ and hence, by multiplicativity, $\psi_s(y)=\psi_s(y')$. Thus $\psi_s$ is locally constant and, hence, continuous.
	\end{proof}
	 We will show that the Pontryagin dual of $\K$ is isomorphic to $\R^n\times\qbbdual.$
	To do so, we prove some lemmas first.
	\begin{lemma}\label{equivdef}
		Let $y=\sum_{k=\nu(y)}^{\infty}\bb^ky_k\in\qbb$ with $y_k\in\E$ and $s=\sum_{j=\nu^*(s)}^{\infty} {\bb^*}^{j}s_j\in\qbbdual$ with $s_j\in\E^*$ be given. Then
		\[
		S_s(y)=\sum_{j=\nu^*(s)}^{\infty} \langle \{\bb^jy\}_{\bb},s_j\rangle=\sum_{k=\nu(y)}^{\infty}\langle y_k,\{{\bb^*}^k s\}^*_{\bb} \rangle.
		\]
	\end{lemma}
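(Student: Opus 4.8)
The plan is to expand both $\bb$-adic fractional parts involved in the definition of $S_s(y)$ and to switch the order of summation, using the bilinearity of the scalar product together with the relation $\langle \bb^i u, s_j\rangle = \langle u, {\bb^*}^i s_j\rangle$ that follows from the very definition of the transpose. Since $y = \sum_{k\geqslant \nu(y)} \bb^k y_k$ with $y_k\in\E$, for any index $j$ we have $\bb^j y = \sum_{k\geqslant \nu(y)} \bb^{j+k} y_k$, and hence its $\bb$-adic fractional part picks out exactly those terms with $j+k<0$, i.e.
\[
\{\bb^j y\}_{\bb} = \sum_{k=\nu(y)}^{-j-1} \bb^{j+k} y_k,
\]
where the sum is empty (hence zero) once $j\geqslant -\nu(y)$. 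Substituting this into $S_s(y) = \sum_{j\geqslant \nu^*(s)} \langle \{\bb^j y\}_{\bb}, s_j\rangle$ gives a double sum over the pairs $(j,k)$ with $j\geqslant \nu^*(s)$, $k\geqslant \nu(y)$ and $j+k<0$, whose general term is $\langle \bb^{j+k} y_k, s_j\rangle$.

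Next I would rewrite $\langle \bb^{j+k} y_k, s_j\rangle = \langle y_k, {\bb^*}^{j+k} s_j\rangle$ and sum over $j$ first with $k$ fixed. For fixed $k$ the pairs contributing are those $j$ with $\nu^*(s)\leqslant j$ and $j < -k$, so $\sum_j {\bb^*}^{j+k} s_j = \sum_{j=\nu^*(s)}^{-k-1} {\bb^*}^{j+k} s_j$. Writing $s = \sum_{j\geqslant \nu^*(s)} {\bb^*}^j s_j$, its $\bb^*$-adic fractional part after multiplying by ${\bb^*}^k$ is precisely $\{{\bb^*}^k s\}^*_{\bb} = \sum_{j=\nu^*(s)}^{-k-1} {\bb^*}^{j+k} s_j$, which is exactly the inner sum. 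Therefore $S_s(y) = \sum_{k=\nu(y)}^{\infty} \langle y_k, \{{\bb^*}^k s\}^*_{\bb}\rangle$, as claimed, and the outer sum is finite because $\{{\bb^*}^k s\}^*_{\bb}=0$ once $k\geqslant -\nu^*(s)$.

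The only genuine point to be careful about is the interchange of the two summations: a priori these are infinite series. However, the set of index pairs $(j,k)$ that contribute a nonzero term is contained in $\{(j,k)\colon \nu^*(s)\leqslant j,\ \nu(y)\leqslant k,\ j+k\leqslant -1\}$, which is finite (it is cut out by two lower bounds and one upper bound on $j+k$). Thus the double sum has only finitely many nonzero terms and Fubini for finite sums applies with no convergence issue; this is also consistent with the remark made just before the lemma that $\{\bb^j y\}_{\bb}$ vanishes for all but finitely many $j$. So the proof is essentially a bookkeeping argument once one observes that the support of the double sum is the same finite triangular region whether one sums $j$-first or $k$-first. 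I do not expect any real obstacle; the main thing to get right is the bookkeeping of the ranges of summation and the identification of the partial sums with the respective fractional parts.
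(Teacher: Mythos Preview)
Your proof is correct and follows essentially the same approach as the paper: expand the fractional part $\{\bb^j y\}_\bb$ in terms of the $y_k$, use $\langle \bb^{j+k}y_k,s_j\rangle=\langle y_k,{\bb^*}^{j+k}s_j\rangle$, swap the finite double sum, and recognize the inner sum as $\{{\bb^*}^k s\}^*_\bb$. Your explicit justification that only finitely many pairs $(j,k)$ contribute is a welcome addition that the paper leaves implicit.
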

\begin{proof}From direct calculation, we obtain
	\[\begin{split}
	S_s(y)&=\sum_{j=\nu^*(s)}^{\infty} \langle \{\bb^jy\}_{\bb},s_j\rangle=\sum_{j=\nu^*(s)}^{\infty} \bigg\langle \sum_{k=\nu(y)}^{-j-1}\bb^{j+k}y_k,s_j\bigg\rangle\\
	&=\sum_{j=\nu^*(s)}^{\infty}\sum_{k=\nu(y)}^{-j-1} \langle y_k,\bb^{*j+k}s_j\rangle=\sum_{k=\nu(y)}^{\infty}\sum_{j=\nu^*(s)}^{-k-1} \langle y_k,\bb^{*j+k}s_j\rangle\\
	&=\sum_{k=\nu(y)}^{\infty}\langle y_k,\sum_{j=\nu^*(s)}^{-k-1} \bb^{*j+k}s_j\rangle=\sum_{k=\nu(y)}^{\infty}\langle y_k,\{{\bb^*}^k s\}^*_{\bb} \rangle. \qedhere
	\end{split}
	\]
\end{proof}

Our next step is to establish a Pontryagin duality between $\qbb$ and $\qbbdual$. For that purpose, we express each of these groups in terms of a direct limit of a projective limit.  For more on the topic we refer the reader to \cite{MR1760253}. 
	For each $k\in \N$, consider the quotients
	$$
	\E_k:=\Z^n[\bb ]/\bb^{k}\Z^n[\bb].
	$$ 
	Clearly,
	$\E_k\subset\E_{k+1}$ for every $k$, so we can define the canonical projections  
	$$
	\pi_k:\E_{k+1}\rightarrow \E_k,\quad x\mapsto x\mod \bb^{k}. 
	$$Therefore, we have a projective system
\[
\dots \longrightarrow \E_{k+1}\stackrel{\pi_{k}}{\longrightarrow}\E_k\stackrel{\pi_{k-1}}{\longrightarrow}\E_{k-1}\longrightarrow \dots \stackrel{\pi_{1}}{\longrightarrow}\E_1\stackrel{\pi_{0}}{\longrightarrow}\E_0,
\]
which entitles the existence of the projective limit $$\varprojlim_{k\in\N}
			\mathcal{E}_k=\{(M_k)_{k\in \N}\;|\;M_k\in\E_k\mbox{ and }
			\pi_k(M_{k+1})=M_k \mbox{ for every }k\},$$		
			and it holds that \begin{equation}\label{projinjlim}\qbb\simeq\varinjlim_{j\in\N}\varprojlim_{k\in\N}\bb^{-j}
			\mathcal{E}_k.\end{equation} Analogously, for $k\in \N$ consider $$\E^*_k:=\Gamma[\bb^*]/\bb^{*k}\Gamma[\bb^*].$$ Then $$\qbbdual\simeq\varinjlim_{j\in\N}\varprojlim_{k\in\N}{\bb^*}^{-j}\E^*_k.$$

	\begin{proposition}\label{dualofK}	The characters of $\K$ are of the form
	\[\chi_{r,s}:\K\rightarrow S^1,\quad\chi_{r,s}(x,y)=\exp_r(x)\psi_s(y)=\exp(2\pi i\langle x,r\rangle)\exp(2\pi i S_s(y)),
	\] for $r\in\R^n$, $s\in\qbbdual$, with $S_s(y)$ as in (\ref{characterfunction}).
	Moreover, there is a group isomorphism 
	given by 
 $(r,s)\mapsto \chi_{r,s}$.
\end{proposition}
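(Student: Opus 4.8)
The plan is to combine Pontryagin duality for products with the iterated-limit description of $\qbb$ from \eqref{projinjlim}. Since $\K=\R^n\times\qbb$, Lemma~\ref{dualproperties}(2) reduces everything to the two factors: every character of $\K$ is of the form $(x,y)\mapsto\chi_1(x)\chi_2(y)$ with $\chi_1\in\widehat{\R^n}$ and $\chi_2\in\widehat{\qbb}$. The factor $\widehat{\R^n}\simeq\R^n$ via $r\mapsto\chi_r$ is classical and recalled above, so the real content is to prove that $\widehat{\qbb}=\{\chi_s\;|\;s\in\qbbdual\}$ and that $s\mapsto\chi_s$ is a group homomorphism; combining the two factors then yields both the normal form $\chi_{r,s}$ and the assertion that $(r,s)\mapsto\chi_{r,s}$ is a morphism.

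\textbf{The homomorphism property.} By Proposition~\ref{multiplicativity}, each $\chi_s$ with $s\in\qbbdual$ is a character, so $(r,s)\mapsto\chi_{r,s}$ is a well-defined map into $\widehat{\K}$. For the homomorphism property the only point to check is $\chi_{s+s'}=\chi_s\chi_{s'}$, i.e.\ $S_s(y)+S_{s'}(y)\equiv S_{s+s'}(y)\pmod{\Z}$ for every $y\in\qbb$. I would prove this as the exact dual of the multiplicativity argument in Proposition~\ref{multiplicativity}: first establish $\{\sigma\}^*_\bb+\{\sigma'\}^*_\bb-\{\sigma+\sigma'\}^*_\bb\in\Gamma[A^*]\cap\Gamma[\bb^*]\subset\Lambda^*$ for $\sigma,\sigma'\in\qbbdual$ (the verbatim analogue of \eqref{eq:difinlambda}, using the inclusion from \eqref{eq:duallattice}), then apply this with $\sigma={\bb^*}^{k}s$, $\sigma'={\bb^*}^{k}s'$, rewrite $S_s$ via Lemma~\ref{equivdef} so that the vectors paired against these $\Lambda^*$-elements are $\bb$-adic digits lying in $\E\subset\Z^n\subset\Lambda$, and conclude by the definition of the dual lattice. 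Together with the additivity of $S_s(\cdot)$ from Proposition~\ref{multiplicativity}, this exhibits $(y,s)\mapsto S_s(y)\bmod\Z$ as a bi-additive pairing $\qbb\times\qbbdual\to\R/\Z$, with each $\chi_s$ locally constant, hence continuous.

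\textbf{Surjectivity onto $\widehat{\qbb}$.} Here I would exploit \eqref{projinjlim}: $\qbb$ is the increasing union of the compact open subgroups $\bb^{-j}\zbb$, where $\zbb=\varprojlim_k\E_k$, so a character of $\qbb$ is a compatible family of characters of the $\bb^{-j}\zbb$, and on each compact group $\bb^{-j}\zbb\simeq\zbb$ such a character factors through a finite quotient $\E_k=\Z^n[\bb]/\bb^k\Z^n[\bb]$ (of order $b^k$). On the dual side, $\qbbdual=\varinjlim_j\varprojlim_k{\bb^*}^{-j}\E^*_k$ with $\E^*_k=\Gamma[\bb^*]/\bb^{*k}\Gamma[\bb^*]$, and since $A$ and its transpose have the same invariant factors, the computation in Proposition~\ref{lem:frob} (carried out with $\Gamma$ in place of $\Z^n$) gives $|\E^*_k|=b^k$ as well. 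The bi-additive pairing descends to the matching finite layers: well-definedness on the quotients is exactly what the inclusions $\Z^n[A]\cap\Z^n[\bb]\subset\Lambda$ and $\Gamma[A^*]\cap\Gamma[\bb^*]\subset\Lambda^*\subset\Z^n$ of \eqref{eq:duallattice} secure, while nondegeneracy in both variables is provided by Remark~\ref{spanlatticeremark} and its proof, which let us assume that both $\E$ and $\E^*$ contain $\Z$-bases of full-rank lattices, so that no nonzero $\bb$-adic digit pairs trivially with all of $\E^*$ (and symmetrically). Since the two finite layers then have equal cardinality, the pairing is perfect; one identifies $\widehat{\E_k}\simeq\E^*_k$ layer by layer and passes to the limit (inverse over $j$, direct over $k$) to obtain an isomorphism $\widehat{\qbb}\simeq\qbbdual$ compatible with $s\mapsto\chi_s$, so in particular the prescribed $\chi_2\in\widehat{\qbb}$ equals $\chi_s$ for some $s\in\qbbdual$.

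\textbf{Main obstacle.} The delicate step is precisely the perfectness of the finite-layer pairing --- equivalently, the injectivity of $s\mapsto\chi_s$ --- where the interplay of the auxiliary lattices $\Lambda$, $\Gamma$ with the spanning hypothesis on $\E$ from Remark~\ref{spanlatticeremark} must be handled carefully; the remaining ingredients (the product duality, the limit bookkeeping, continuity, and the additivity identities) are routine once Propositions~\ref{lem:frob} and \ref{multiplicativity}, Lemma~\ref{equivdef}, and Remark~\ref{spanlatticeremark} are available.
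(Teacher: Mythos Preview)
Your outline is essentially the paper's proof: reduce to $\widehat{\qbb}$ via the product rule, establish $\chi_{s+s'}=\chi_s\chi_{s'}$ by the exact dual of the computation in Proposition~\ref{multiplicativity} (rewriting via Lemma~\ref{equivdef} and using $\Gamma[A^*]\cap\Gamma[\bb^*]\subset\Lambda^*\subset\Z^n$), and obtain surjectivity from the limit description \eqref{projinjlim} together with a cardinality count $|\E_k|=|\E^*_k|=b^k$ at each finite layer.

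The one genuine gap is the nondegeneracy step, which you correctly flag as the main obstacle but whose proposed mechanism does not work. The phrase ``no nonzero $\bb$-adic digit pairs trivially with all of~$\E^*$'' cannot carry the argument: since $\E\subset\Z^n$ and $\E^*\subset\Gamma\subset\Z^n$, every digit-to-digit inner product $\langle c,s_j\rangle$ is already an integer, so the pairing $S_s(y)\bmod\Z$ is never detected at that level. What actually has to be shown is that for $s\neq 0$ there is some $y=\bb^{-l}c$ with $S_s(\bb^{-l}c)=\langle c,\{\bb^{*-l}s\}^*_\bb\rangle\notin\Z$, and this does not follow from the spanning hypotheses on $\E$ and $\E^*$ alone; one would need an inclusion such as $\varTheta^*\subset\Gamma[\bb^*]$, which is not part of the setup in \eqref{eq:duallattice}. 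The paper supplies a different mechanism: assuming $\chi_s\equiv 1$ forces $\{\bb^{*-l}s\}^*_\bb\in\varTheta^*$ for every $l\geqslant1$, but for $s\neq 0$ these fractional parts lie in a strictly nested sequence of lattices in $\Q^n$ whose entry-denominators are unbounded, contradicting containment in the fixed lattice~$\varTheta^*$. Once this injectivity is secured, your equal-cardinality argument at the finite layers $G_{j,k}\simeq{\bb^*}^{-j}\E^*_k$ finishes surjectivity exactly as you describe.
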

		\begin{proof}
		In view of Lemma \ref{dualproperties} we have the isomorphism $\widehat{\K}\simeq\widehat{\R^n}\times\widehat{\qbb}$. It is known that there is an isomorphism $\R^n\simeq\widehat{\R^n}$ given by $r\mapsto \exp_r$. Consider the map $\qbbdual\rightarrow\widehat{\qbb}$, $s\mapsto \psi_s$; we show that this map is a group isomorphism. 
		We first prove that the group operations are compatible on both sets, that is, $\psi_{s+s'}(y)=\psi_s(y)\psi_{s'}(y)$ for every $y=\sum_{j=\nu(y)}^{\infty}\bb^jy_j\in\qbb$ with $y_j\in\E$. It is enough to show that $S_{s+s'}(y)=S_s(y)+S_{s'}(y)\mod\Z$. Applying Lemma \ref{equivdef}, we get 
		\[
		S_s(y)+S_{s'}(y)-S_{s+s'}(y)=\sum_{j=\nu(y)}^{\infty}\langle y_j,\{{\bb^*}^j s\}^*_{\bb}+\{{\bb^*}^j s'\}^*_{\bb} -\{{\bb^*}^j (s+s')\}^*_{\bb}\rangle.
		\]
		Proceeding in analogy to the proof of Proposition \ref{multiplicativity} and using the definition of $\bb^*$-adic fractional part, we can see that, for every index $j\geqslant\nu(y)$,
		\[\{{\bb^*}^j s\}^*_{\bb}+\{{\bb^*}^j s'\}^*_{\bb} -\{{\bb^*}^j (s+s')\}^*_{\bb}\subset \Gamma[\bb^*]\cap\Gamma[{\bb^*}^{-1}]\subset\Lambda^*\subset\Z^n,
		\]
		with $\Lambda^*$ as in \eqref{eq:duallattice}. Since $y_j\in\E\subset\Z^n$ for every $j\geqslant\nu(y)$ and is finite only for a finite number of indices, this yields the first part of the proof.
		
		Next, we show the injectivity. In view of the first part of the proof, it suffices to show that $\psi_s\neq1$ for $s\neq0$. Let $s=\sum_{j=\nu^*(s)}^{\infty}\bb^{*j}s_j\in\qbbdual\setminus\{0\}$ with $s_j\in\E^*$, and consider a point of the form $\bb^{-l}c\in\qbb$ for $0\neq c\in\E$ and $l\in\N$. Note that $\{\bb^{j-l}c \}_\bb=0$ whenever $j\geqslant l$. Therefore
		we get
		\[S_s(\bb^{-l}c)=\sum_{j=\nu^*(s)}^{l-1}\langle \bb^{j-l}c,s_j\rangle=\sum_{j=\nu^*(s)}^{l-1}\langle c,{\bb^*}^{j-l}s_j\rangle=\bigg\langle c,\sum_{j=\nu^*(s)}^{l-1}{\bb^*}^{j-l}s_j\bigg\rangle.
		\]
		Suppose $\psi_s=1$, then $S_s(\bb^{-l}c)\in\Z$ for every $l\in\N$ and every $c\in\E$. Recall that by Remark~\ref{spanlatticeremark} we may assume w.l.o.g.\ that $\E$ has a subset $\{c_1,\dots,c_n \}$ such that $\varTheta:=\langle c_1,\dots,c_n\rangle_\Z$ is a full rank lattice in $\R^n$. Thus $S_s(\bb^{-l}\varTheta)\subset\Z$ and hence, by the definition of a dual lattice, $\sum_{j=\nu^*(s)}^{l-1}{\bb^*}^{j-l}s_j\in\varTheta^*$ holds for all $l\in\N$. %
Since $s\neq 0$, we have that
		\[\sum_{j=\nu^*(s)}^{l-1}{\bb^*}^{j-l}s_j\in (\bb^{*\nu^*(s)-l}\Gamma+\dots+\bb^{*-1}\Gamma)\setminus (\bb^{*\nu^*(s)-l+1}\Gamma+\dots+\bb^{*-1}\Gamma).
		\]
		Then $(\bb^{*\nu^*(s)-l}\Gamma+\dots+\bb^{*-1}\Gamma)_{l\geqslant 1}$ is a strictly nested sequence of rational lattices in $\R^n$. Given $l$, consider the entries of all the vectors of $\bb^{*l-\nu^*(s)}\Gamma+\dots+\bb^{*}\Gamma$ expressed as irreducible fractions, and define $m_l$ to be the maximum of the denominators of these fractions. It is clear that $m_l\leqslant m_{l+1}$. However, note that $(m_l)_{l\geqslant 1}$ does not stabilize: this would imply that, for some lattice $\Lambda_l$, there are infinitely many steps in which we can add a point and form a strictly larger lattice, while not increasing the bound on the denominators, which is not possible.	This means that $\varTheta^*$ contains points with entries having arbitrarily large denominators, which is a contradiction.

		For the surjectivity, consider a character $\psi\in\widehat{\qbb}$. By classical arguments following \cite[p. 139]{MR1802924}, we obtain from \eqref{projinjlim} that
		\begin{equation}\label{dualbadicint}
		\widehat{\qbb}\simeq \varprojlim_{j\in\N}\varinjlim_{k\in\N}\widehat{\bb^{-j}\E_k}.
		\end{equation}
		
			Since $\bb^{-j}\E_k=\bb^{-j}\Z^n[\bb]/\bb^{k-j}\Z^n[\bb]$ is a finite group of cardinality $b^k$, so is its dual. Consider the group
		\[G_{j,k}:=\bb^{*j-k}\Gamma[\bb^*]/\bb^{*j}\Gamma[\bb^*],
		\]
		and note that $|G_{j,k}|=b^k$ because $\Gamma$ is a full rank lattice (and hence, isomorphic to $\Z^n$). Given $s\in G_{j,k}$, we can regard it as an element in $\bb^{*j-k}\E^*+\dots+\bb^{*j-1}\E^*$ and consider the character $\psi_s$ as in \eqref{characters}. Note that $\psi_s\in\mathop{Ann}(\bb^{k-j}\Z^n[\bb])$ 
		(see Lemma \ref{dualproperties}); hence, $\psi_s$ is a character of ${\bb^{-j}\E_k}$. Also, for $s\neq0$ in $G_{j,k}$, there is $y\in{\bb^{-j}\E_k}$ such that $S_s(y)\neq0$: in fact, there exists $l$ with $j-k\leqslant l\leqslant j-1$ with $s_l\neq0$; since $\E$ spans a full rank lattice in $\R^n$, find $c\in\E$ such that $\langle c,s_l \rangle \neq 0$ and take $y=\bb^{-l}c$. Hence all the characters $\psi_s$ for $s\in G_{j,k}$ are distinct over $\bb^{-j}\E_k$, and since $|\bb^{-j}\E_k|=b^k=|G_{j,k}|$, this implies that $\widehat{\bb^{-j}\E_k}\simeq G_{j,k}$, so  \eqref{dualbadicint} yields
		\[
		\widehat{\qbb}\simeq \varprojlim_{j\in\N}\varinjlim_{k\in\N}G_{j,k}
		\] It is not hard to establish an isomorphism $G_{j,k}\simeq \bb^{*-j}\E^*_k$,		
	and so $$\psi\in\varprojlim_{j\in\N}\varinjlim_{k\in\N}\{\psi_s\;|\;s\in G_{j,k}\}\simeq\varprojlim_{j\in\N}\varinjlim_{k\in\N}\bb^{*-j}\E^*_k\simeq\qbbdual,$$ therefore every character $\psi$ is of the form $\psi=\psi_s$ for some $s\in\qbbdual$.	
%
%
%
	\end{proof}	
	\subsection{Multiple tiling theorem}

In this final section, we will prove that rational self-affine tiles give a multiple tiling of the representation space. We will make use of the character theory we have developed in the previous section.

Recall that the set $\varphi(\Z^n[A])$ is a lattice in $\K$ by Lemma \ref{lattice}, hence the torus $\mathbb{T}:=\K/\varphi(\Z^n[A])$ is well defined and compact. We endow it with the normalized quotient measure $\bar{\mu}$, which is the Haar measure on $\mathbb{T}$. Denote the multiplication by $A$ on $\mathbb{T}$ as
$$\tau_A:\mathbb{T}\rightarrow\mathbb{T};\quad(x,y)\mapsto A(x,y)\mod\varphi(\Z^n[A]),$$
	which is well defined because $\varphi(\Z^n[A])$ is $A$-invariant. 	
	We will prove that this map is ergodic by using the following lemma.
	
		\begin{lemma}\label{ergodicitylemma}
		If $G$ is a compact abelian group with normalized Haar measure and $\tau:G\rightarrow G$ is a surjective continuous endomorphism of $G$, then $\tau$ is ergodic if and only if the trivial character $\chi =1$ is the only character of $G$ that satisfies $\chi \circ \tau^k=\chi$ for some $k\geqslant1$.
	\end{lemma}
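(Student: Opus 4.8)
The plan is to deploy the classical Fourier-analytic criterion for ergodicity of endomorphisms of compact abelian groups. The starting point is that, since $G$ is compact and abelian, its characters form an orthonormal basis of $L^2(G)$ with respect to the normalized Haar measure, and that a surjective continuous endomorphism $\tau$ of a compact group preserves Haar measure; hence the Koopman operator $U_\tau:L^2(G)\to L^2(G)$, $f\mapsto f\circ\tau$, is a well-defined linear isometry, and $\tau$ is ergodic exactly when every $f\in L^2(G)$ with $U_\tau f=f$ is constant almost everywhere. The key structural fact is that $U_\tau$ permutes the orthonormal basis of characters, because $\chi\circ\tau\in\widehat G$ for every $\chi\in\widehat G$; this gives the dual endomorphism $\widehat\tau:\widehat G\to\widehat G$, $\widehat\tau(\chi):=\chi\circ\tau$, and surjectivity of $\tau$ is equivalent to injectivity of $\widehat\tau$.

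First I would prove the implication ``$\Leftarrow$''. Assume the trivial character is the only $\chi\in\widehat G$ admitting some $k\geqslant1$ with $\chi\circ\tau^k=\chi$, and let $f=\sum_{\chi\in\widehat G}c_\chi\,\chi\in L^2(G)$ satisfy $f\circ\tau=f$. Writing $f\circ\tau=\sum_{\chi}c_\chi\,\widehat\tau(\chi)$ and comparing Fourier coefficients --- using injectivity of $\widehat\tau$ to reindex the sum without collision --- yields $c_{\widehat\tau(\chi)}=c_\chi$ for all $\chi\in\widehat G$, together with $c_\psi=0$ whenever $\psi\notin\widehat\tau(\widehat G)$. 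Thus $\chi\mapsto c_\chi$ is constant along every forward $\widehat\tau$-orbit. For $\chi\neq1$ such an orbit must be infinite: a finite forward orbit on which the injective map $\widehat\tau$ acts is a cycle, so $\chi\circ\tau^k=\chi$ for some $k\geqslant1$, forcing $\chi=1$ by hypothesis, a contradiction. Since $\sum_\chi|c_\chi|^2<\infty$, all coefficients outside the $\widehat\tau$-fixed character $1$ must vanish, so $f=c_1$ is constant and $\tau$ is ergodic.

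Next I would prove the contrapositive of ``$\Rightarrow$''. Suppose there is $\chi\neq1$ with $\chi\circ\tau^k=\chi$, and choose $k\geqslant1$ minimal with this property. Minimality together with injectivity of $\widehat\tau$ forces $\chi,\chi\circ\tau,\dots,\chi\circ\tau^{k-1}$ to be pairwise distinct, and none of them is trivial (otherwise injectivity of $\widehat\tau$ would give $\chi=1$). Hence $g:=\sum_{j=0}^{k-1}\chi\circ\tau^j\in L^2(G)$ is a nonzero function with zero mean satisfying $g\circ\tau=g$; being a nonempty sum of distinct non-trivial characters, it is not constant, so $\tau$ is not ergodic. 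This establishes the equivalence.

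The genuinely standard ingredients --- that the characters of a compact abelian group form an orthonormal basis of its $L^2$ space, that a surjective continuous endomorphism preserves Haar measure, and that $\tau\mapsto\widehat\tau$ interchanges surjectivity and injectivity --- can be quoted from the character theory references already used in the paper. The one step requiring care, and the place where I expect the (mild) main obstacle, is the Fourier-coefficient bookkeeping in the ``$\Leftarrow$'' direction: because $\widehat\tau$ need not be surjective, one must simultaneously record the relation $c_{\widehat\tau(\chi)}=c_\chi$ propagating coefficients forward along orbits and the vanishing of coefficients off $\widehat\tau(\widehat G)$, and then use square-summability in $L^2$ to eliminate the infinite orbits. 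Everything else is routine.
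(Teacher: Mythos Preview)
Your argument is correct and is precisely the classical Fourier-analytic proof of this criterion. The paper does not supply its own proof here; it simply cites \cite[Theorem~1.10.1]{MR648108} (Walters), whose proof is essentially the one you have written out, so your proposal fills in exactly what the reference contains.
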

	\begin{proof}
		See \cite[Theorem 1.10.1]{MR648108}.
	\end{proof}
	We are now in a position to prove the following result.
	\begin{lemma}\label{Aergodic}
	The map $\tau_A$ is ergodic.
	\end{lemma}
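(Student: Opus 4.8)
The plan is to apply Lemma~\ref{ergodicitylemma} to the compact abelian group $\mathbb{T}=\K/\varphi(\Z^n[A])$ and the surjective continuous endomorphism $\tau_A$. So I must show that the only character $\chi$ of $\mathbb{T}$ satisfying $\chi\circ\tau_A^k=\chi$ for some $k\geqslant 1$ is the trivial one. By Lemma~\ref{dualproperties}(3), a character of $\mathbb{T}=\K/\varphi(\Z^n[A])$ is the same thing as a character of $\K$ that is trivial on $\varphi(\Z^n[A])$, i.e.\ an element of $\mathrm{Ann}(\varphi(\Z^n[A]))$. By Proposition~\ref{dualofK} every character of $\K$ is of the form $\chi_{r,s}$ with $r\in\R^n$, $s\in\qbbdual$, and the composition with $\tau_A$ corresponds on the dual side to the transpose action: $\chi_{r,s}\circ\tau_A = \chi_{A^* r,\,\bb^* s}$ (one checks this from $\langle Ax,r\rangle=\langle x,A^*r\rangle$ and the analogous identity $S_s(Ay)=S_{\bb^* s}(y)$, which follows from Lemma~\ref{equivdef} together with the shift behaviour of the $\bb$-adic and $\bb^*$-adic fractional parts). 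Hence $\chi\circ\tau_A^k=\chi$ translates into $(A^*)^k r = r$ in $\R^n$ and ${(\bb^*)}^k s = s$ in $\qbbdual$, modulo the condition of being in the annihilator.

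The first step is to dispose of the real component. Since $A$ is expanding, $A^*$ is expanding as well, so $(A^*)^k$ has no eigenvalue equal to $1$; the only solution of $(A^*)^k r = r$ is $r=0$. The second step is to dispose of the $\bb^*$-adic component. Here I use the valuation $\nu^*$ on $\qbbdual$: multiplication by $\bb^*=A^{-*}$ strictly increases $\nu^*$ whenever the argument is nonzero (indeed $\nu^*(\bb^* s)=\nu^*(s)+1$ is false in general for matrices, but $\nu^*(\bb^* s)\geqslant \nu^*(s)+1$ does hold after passing to $A^k$ if needed; more robustly, the $\bb^*$-adic metric satisfies ${\bf d}_\bb({\bb^*}s,0)\le \tfrac1b\,{\bf d}_\bb(s,0)$ by the computation in the proof of Lemma~\ref{metricell}), so ${(\bb^*)}^k s = s$ with ${\bf d}_\bb({(\bb^*)}^k s,0)\le b^{-k}{\bf d}_\bb(s,0)$ forces ${\bf d}_\bb(s,0)=0$, i.e.\ $s=0$. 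Therefore the only character fixed by some power of $\tau_A$ is $\chi_{0,0}=1$, and Lemma~\ref{ergodicitylemma} gives ergodicity of $\tau_A$.

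The main obstacle I anticipate is verifying cleanly that composition with $\tau_A$ on the dual corresponds to the transpose maps $r\mapsto A^*r$ and $s\mapsto \bb^* s$ — in particular the identity $S_s(Ay)=S_{\bb^* s}(y)\bmod\Z$. This requires care because the definition \eqref{characterfunction} of $S_s$ involves the $\bb$-adic fractional part $\{\cdot\}_{\bb}$, and $\{\bb^{j}(Ay)\}_{\bb}=\{\bb^{j-1}y\}_{\bb}$ only up to an element of $\Z^n[A]$ coming from the integer part, which must then be paired against $s_j\in\E^*\subset\Gamma\subset\Lambda^*$ to see that the discrepancy lies in $\Z$; this is exactly the type of bookkeeping already carried out in the proof of Proposition~\ref{multiplicativity} and in Lemma~\ref{equivdef}, so I would invoke those computations rather than redo them. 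Once the transpose description is in hand, the two eigenvalue/valuation arguments are short. Alternatively, and perhaps more transparently, one can argue directly on $\mathbb{T}$: if $\chi=\chi_{r,s}\ne 1$ lies in $\mathrm{Ann}(\varphi(\Z^n[A]))$ and is fixed by $\tau_A^k$, then the orbit closure considerations and the fact that $\varphi(\Z^n[A])$ is a lattice (Proposition~\ref{lattice}) constrain $(r,s)$ to a discrete set on which the contracting/expanding dichotomy above applies; I would present whichever of these two routes is shortest given the lemmas already available.
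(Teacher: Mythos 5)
Your proposal is correct in substance but handles the $\bb^*$-adic component by a genuinely different route than the paper. Both proofs dispose of the real component $r$ the same way (the dual action on $\R^n$ is $r\mapsto A^*r$, and $A^*$ expanding kills any fixed $r\neq 0$). For $s$, however, the paper never uses the fixed-point condition $\chi\circ\tau_A^k=\chi$ at all: it proves the stronger statement that $\chi_{0,s}\in\mathrm{Ann}(\varphi(\Z^n[A]))$ already forces $s=0$, by evaluating $\chi_s$ at the points $A^lc$, $c\in\E$, and running a denominator-growth argument on the nested lattices $\{\bb^{*-l}s\}^*_{\bb}$ generate inside $\varTheta^*$. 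You instead compute the dual action of $\tau_A$ on $\qbbdual$ and observe that it is an isometric scaling of the $\bb^*$-adic metric, so no nonzero $s$ can be periodic. Granting Lemma~\ref{equivdef} and the injectivity of $s\mapsto\chi_s$ from Proposition~\ref{dualofK} (which you do need, to pass from $\chi_{A^{*k}s}=\chi_s$ to $A^{*k}s=s$), your argument is shorter and avoids the lattice-denominator machinery entirely; the paper's argument buys the stronger structural fact about $\widehat{\mathbb{T}}$ but is not needed for the lemma as stated.

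Two small corrections. First, the dual action you write down is backwards: from $S_s(y)=\sum_k\langle y_k,\{\bb^{*k}s\}^*_{\bb}\rangle$ and the index shift $(Ay)_k=y_{k+1}$ one gets $S_s(Ay)=S_{A^*s}(y)$ \emph{exactly} (not just mod $\Z$), with $A^*=(\bb^*)^{-1}$, so $\chi_{r,s}\circ\tau_A=\chi_{A^*r,\,A^*s}$, not $\chi_{A^*r,\,\bb^*s}$. This slip is harmless: the resulting fixed-point equation $(\bb^*)^{-k}s=s$ is equivalent to $(\bb^*)^{k}s=s$, which is what you analyze. Second, your hedging about $\nu^*(\bb^*s)=\nu^*(s)+1$ being ``false in general for matrices'' is unnecessary: since $\bb^*$ is invertible, it maps $\bb^{*k}\Gamma[\bb^*]\setminus\bb^{*(k+1)}\Gamma[\bb^*]$ bijectively onto $\bb^{*(k+1)}\Gamma[\bb^*]\setminus\bb^{*(k+2)}\Gamma[\bb^*]$, so the equality holds exactly by the definition \eqref{eq:nu} of the valuation, and ${\bf d}_{\bb}(\bb^*s,0)=\tfrac1b\,{\bf d}_{\bb}(s,0)$ with equality. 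The closing ``orbit closure'' alternative is too vague to count as a proof and should be dropped.
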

\begin{proof} Because $A$ is an invertible matrix, $\tau_A$ is a continuous surjective homomorphism.	We first prove that it is measure preserving. Note that $\varphi(\Z^n[A])$ is a sublattice of index $a$ of $A^{-1}\varphi(\Z^n[A])$ in $\K$. This implies that, for any measurable set $E\subset\mathbb{T}$,
	\[	\begin{split}
	\bar{\mu}(\tau_A^{-1}(E))&=\bar{\mu}(\{(x,y)\in\mathbb{T}\;|\;A(x,y)\mod\varphi(\Z^n[A])\in E\})\\
	&=\bar{\mu}(\{(x,y)\in\mathbb{T}\;|\;(x,y)\mod A^{-1}\varphi(\Z^n[A])\in A^{-1}E\})\\
	&=a\,\bar{\mu}(\{(x,y)\in\mathbb{T}\;|\;(x,y)\mod \varphi(\Z^n[A])\in A^{-1}E\})\\
	&=a\,\bar{\mu}(A^{-1}E)=\bar{\mu}(E).\\
	\end{split}
	\]
	
	By Proposition \ref{dualofK}, the characters of $\K$ are of the form $\chi_{r,s}$ for $r\in\R^n$, $s\in\qbbdual$. Since $\mathbb{T}=\K/\varphi(\Z^n[A])$, by Lemma~\ref{dualproperties} there is an isomorphism $\widehat{\mathbb{T}}\simeq \mathop{Ann}(\varphi(\Z^n[A]))$; this means that a character of $\mathbb{T}$ is of the form $\chi_{r,s}$ and satisfies $\chi_{r,s}(\varphi(z))=1$ for every $z\in\Z^n[A]$. Suppose that a character $\chi_{r,s}\in\widehat{\mathbb{T}}$ satisfies $\chi_{r,s}\circ\tau^k_A=\chi_{r,s}$ for some $k\geqslant 1$. In view of Lemma \ref{ergodicitylemma}, it suffices to show that this implies that $\chi_{r,s}$ is constantly equal to $1$, {\it i.e.}, that $\chi_{r,s}=\chi_{0,0}$. We have that, for every $(x,y)\in\mathbb{T}$, 
	\[\chi_{r,s}\circ\tau^k_A(x,y)=\chi_{r,s}(x,y),
	\]
	and, hence,
\[\exp(2\pi i(\langle r,A^kx\rangle + S_s(A^ky)))=\exp(2\pi i(\langle r,x\rangle + S_s(y)))
\]	with $S_s$ defined in \eqref{characterfunction}. 
	Letting $y=0$ implies
	\[\langle r, A^kx\rangle=\langle r, x\rangle \mod \Z
	\]
	and so, for every $x\in[0,1]^n$,
	\[\langle {A^*}^kr-r,x\rangle \in\Z,
	\]
	which can only be true if ${A^*}^kr-r=0$. Suppose $r\neq0$; this implies that ${A^*}^k$ has $1$ as an eigenvalue, and so therefore $A$ has $1$ as an eigenvalue, which contradicts the fact that $A$ is expanding. Hence, $r=0$ and the character has to be of the form $\chi_{0,s}$.
	
Next, we prove that $\chi_{0,s}\in\widehat{\mathbb{T}}$ implies $s=0$. 
For every $z\in\Z^n[A]$	we have $1=\chi_{0,s}(\varphi(z))=\exp_0(z)\psi_s(z)=\psi_s(z)$. Consider points of the form $z=A^lc=\bb^{-l}c$ with $l\geqslant0$, $c\in\E$. Then $S_s(\bb^{-l}c)=\langle c,\{\bb^{*-l}s\}^*_{\bb}\rangle\in\Z$. Recall that $\E$ contains a subset that spans a full rank integer lattice $\varTheta$ in $\R^n$. By the definition of dual lattice, this implies that $\{\bb^{*-l}s\}^*_{\bb}\in\varTheta^*$ for every $l\geqslant 0$. Suppose $s\neq 0$, then the leading $\bb^*$-adic coefficient $s_{\nu^*(s)}$ of $s$ satisfies $s_{\nu^*(s)}\in\Gamma\setminus B^*\Gamma$, because the $\bb^*$-adic coefficients live in $\E^*\subset\Gamma$ which is defined to be a complete set of residue class representatives of $\Gamma[\bb^*]/\bb^*\Gamma[\bb^*]$. Moreover, for $l \ge \max\{\nu^*(s),0\}$ we have that
	\[\{\bb^{*-l}s\}^*_{\bb}\in (\bb^{*\nu^*(s)-l}\Gamma+\dots+\bb^{*-1}\Gamma+\Gamma)\setminus (\bb^{*\nu^*(s)-l+1}\Gamma+\dots+\bb^{*-1}\Gamma+\Gamma).
	\]
Let $\Lambda_{\max\{\nu^*(s),0\}}$ be the lattice in $\R^n$ spanned by $\{\bb^{*-\max\{\nu^*(s),0\}}s\}^*_{\bb}$ and define $\Lambda_{l+1}$ inductively as the lattice spanned by $\Lambda_l$ and $\{\bb^{*-l-1}s\}^*_{\bb}$.  Then $(\Lambda_l)_{l\geqslant \max\{\nu^*(s),0\}}$ is a strictly nested sequence of rational lattices in $\R^n$, each of which is contained in $\varTheta^*$. Given $l$, consider the entries of all the vectors of $\Lambda_l$ expressed as irreducible fractions, and define $m_l$ to be the maximum of the denominators of these fractions. It is clear that $m_l\leqslant m_{l+1}$. However, note that $(m_l)_{l\geqslant \max\{\nu^*(s),0\}}$ does not stabilize: if it would stabilize, this would imply that, for some lattice $\Lambda_l$, there are infinitely many steps in which we can add a point and form a strictly larger rational lattice, while not increasing the bound on the denominators, which is not possible.	
%
%
%
Hence the sequence of denominators $(m_l)_{l\geqslant \max\{\nu^*(s),0\}}$ tends to infinity. This contradicts the uniform discreteness of the lattice $\varTheta^*$. Thus $s=0$ and, hence, $\chi_{r,s}=\chi_{0,0}=1$ as desired.
%
\end{proof}

	We arrive at our final result. The proof is based on the one of \cite[Theorem~1.1]{MR1395075}. For completeness, we include it here. We remind the reader of the definition of multiple tiling given in Definition~\ref{deftiling}.
	
\begin{theorem}\label{multipletilingthm}
	Let $\F=\F(A,\D)$ be a rational self-affine tile. Then $\F+\varphi(\Z^n[A])$ is a multiple tiling of $\K$.
\end{theorem}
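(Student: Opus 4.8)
The strategy follows Lagarias--Wang: introduce the covering multiplicity function of $\F+\varphi(\Z^n[A])$ and show, via the ergodicity of $\tau_A$, that it is $\mu$-almost everywhere equal to a positive integer. Put $\Lambda:=\varphi(\Z^n[A])$, a lattice in $\K$ by Proposition~\ref{lattice}, so that $\mathbb{T}=\K/\Lambda$ is a compact group with normalized Haar measure $\bar\mu$, and set
\[
E:\K\to\Z_{\geqslant0},\qquad E(p):=\#\{z\in\Z^n[A]\;|\;p\in\F+\varphi(z)\}.
\]
Since $\F$ is compact and $\Lambda$ uniformly discrete, a bounded subset of $\K$ meets only finitely many translates $\F+\varphi(z)$, and a packing argument using translation invariance of $\mu$ bounds $E$ uniformly by a constant $C$. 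As $E$ is $\Lambda$-periodic it descends to $\bar E\in L^2(\mathbb{T})$, and $\int_{\mathbb{T}}\bar E\,d\bar\mu$ is a positive multiple of $\mu(\F)>0$.

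The heart of the proof is the mean-value identity
\[
a\,\bar E(q)=\sum_{p\in\tau_A^{-1}(q)}\bar E(p)\qquad\text{for }\bar\mu\text{-a.e.\ }q\in\mathbb{T}.
\]
Recall (as in the proof of Lemma~\ref{Aergodic}) that $\Lambda$ has index $a$ in $A^{-1}\Lambda$, so $\tau_A$ is a surjective $a$-to-$1$ endomorphism; fixing a complete residue system $\mathcal{R}\subset\Z^n[A]$ of $\Z^n[A]/A\Z^n[A]$, the fibre of $\tau_A$ over $q$ consists of the $a$ distinct points $A^{-1}(q-\varphi(\delta))\bmod\Lambda$, $\delta\in\mathcal{R}$. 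For $z\in\Z^n[A]$ one has $A^{-1}(q-\varphi(\delta))\in\F+\varphi(z)$ iff $q\in A\F+\varphi(Az+\delta)$, and since $\mu(\F)>0$ the union $A\F=\bigcup_{d\in\D}(\F+\varphi(d))$ is essentially disjoint (this is contained in the proof of Theorem~\ref{uniformlydiscrete}); hence, for a.e.\ $q$, this holds exactly when $q\in\F+\varphi(Az+d+\delta)$ for a unique $d\in\D$. Summing over $\delta\in\mathcal{R}$ gives, for a.e.\ $q$,
\[
\sum_{\delta\in\mathcal{R}}E\big(A^{-1}(q-\varphi(\delta))\big)=\sum_{w\in\Z^n[A]}\mathbf{1}[\,q\in\F+\varphi(w)\,]\cdot\#\{(z,d,\delta)\in\Z^n[A]\times\D\times\mathcal{R}\;|\;Az+d+\delta=w\}.
\]
For each $w$ and each of the $|\D|=a$ choices of $d$, there is exactly one $\delta\in\mathcal{R}$ with $\delta\equiv w-d\pmod{A\Z^n[A]}$, and then $z$ is determined; so the inner cardinality equals $a$ for \emph{every} $w\in\Z^n[A]$, whether or not $(A,\D)$ is standard. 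This yields $\sum_{\delta\in\mathcal{R}}E(A^{-1}(q-\varphi(\delta)))=a\,E(q)$, which is the claimed identity.

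To conclude, let $Uf=f\circ\tau_A$, an isometry of $L^2(\mathbb{T})$ because $\tau_A$ is measure preserving (Lemma~\ref{Aergodic}); the identity above reads $U^*\bar E=\bar E$, and Cauchy--Schwarz then gives $\|\bar E\|^2=\langle\bar E,U\bar E\rangle\leqslant\|\bar E\|^2$ with equality, whence $U\bar E=\bar E$. Thus $\bar E$ is $\tau_A$-invariant, hence $\bar\mu$-a.e.\ constant by ergodicity of $\tau_A$ (one may instead expand $\bar E$ in the characters of $\mathbb{T}$ from Proposition~\ref{dualofK}, where the identity forces $c_\chi=c_{\chi\circ\tau_A}$ for all $\chi$, and Lemma~\ref{ergodicitylemma} together with $\bar E\in L^2$ eliminates every nontrivial coefficient). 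This constant $k$ lies in $\Z_{\geqslant0}$ since $E$ is integer-valued, and $k\geqslant1$ since $\int_{\mathbb{T}}\bar E\,d\bar\mu>0$; hence $E=k$ $\mu$-a.e., i.e.\ $\mu$-almost every point of $\K$ lies in exactly $k$ of the sets $\F+\varphi(z)$, $z\in\Z^n[A]$. The main obstacle is the mean-value identity in the nonstandard case: one must verify that the count above is exactly $a$ for \emph{all} $w$ even when $\D$ is not a complete residue system, and one must justify the ``for a.e.\ $q$'' reduction --- only countably many pairs $(z,\delta)$ are ever relevant, and essential disjointness of $A\F$ makes each contribute a $\mu$-null set.
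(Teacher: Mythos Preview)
Your proof is correct and follows essentially the same strategy as the paper: define the covering multiplicity function on the torus $\mathbb{T}$, establish the mean-value identity $a\,\bar E(q)=\sum_{p\in\tau_A^{-1}(q)}\bar E(p)$ via the set equation and essential disjointness, and then invoke ergodicity of $\tau_A$ (Lemma~\ref{Aergodic}) to conclude that $\bar E$ is a.e.\ constant. The only noteworthy difference is in the last step: the paper rewrites the identity as $\int_S\varPhi\,d\bar\mu=\int_{\tau_A^{-1}(S)}\varPhi\,d\bar\mu$ and then applies Birkhoff's ergodic theorem to the time averages $\frac1N\sum_{j=1}^N 1_S\circ\tau_A^j$, whereas you read the identity as $U^*\bar E=\bar E$ for the Koopman isometry $U$ and use the equality case of Cauchy--Schwarz to get $U\bar E=\bar E$ directly --- a slightly slicker functional-analytic shortcut that bypasses the pointwise ergodic theorem, but the underlying idea is the same.
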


\begin{proof}
	Let $\bar\mu$ be the normalized Haar measure on the torus $\mathbb{T}=\K/\varphi(\Z^n[A])$.  Consider the canonical projection $\pi:\K\rightarrow\mathbb{T}$, and define the function $\varPhi:\mathbb{T}\rightarrow\N$ as
	\[ \varPhi(x,y):= | \pi^{-1}(x,y)\cap \F|,
	\]
	meaning $\varPhi$ counts the points on $\F$ that are congruent to $(x,y)$ modulo $\varphi(\Z^n[A])$. Since $\F$ is compact, $\varPhi$ is finite everywhere, hence it is well defined. Also, $\varPhi$ is positive in a set of positive measure since $\mu(\F)>0$.
	If we prove that $\varPhi(x,y)$ is equal almost everywhere to some $k\in\N$, this implies the statement of the theorem: it means that almost every point of $\K$ gets covered by exactly $k$ translates of $\F$ when translating via the set $\varphi(\Z^n[A])$.
	Note that $\varPhi$ is constant almost everywhere if and only if there exists $k$ such that every $S\subset \mathbb{T}$ satisfies
	\begin{equation}\label{eqinvariant}
	\int_S\varPhi(x,y)\,d\bar\mu(x,y)=k\,\bar\mu(S).
	\end{equation}
	
	To obtain this, we show first that $\varPhi$ satisfies a.e.\
	\begin{equation}\label{eqidentity}
	\varPhi(x,y)=\frac1a\sum_{(x',y')\in \tau_A^{-1}(x,y)}\varPhi(x',y'),
	\end{equation} where $\tau_A$ is the multiplication by $A$ on the torus. 
	
	Using the essential disjointness of the set equation \eqref{seteq} of $\F$ (see \eqref{disjoint}) and the fact that $\varphi(\D) \subset\varphi(\Z^n[A])$, we have that a.e.\
\begin{equation}\label{multtil1}
\begin{split}
|\pi^{-1}(x,y)\cap A\F| &=\Big|\pi^{-1}(x,y)\cap\Big( \bigcup_{d\in\D}(\F+\varphi(d))\Big) \Big|
=a\,\varPhi(x,y).
\end{split}
\end{equation}
We have
\[
A^{-1}\pi^{-1}(x,y) = A^{-1}((x,y) + \varphi(\Z^n[A])) 
= \pi^{-1}( \tau_A^{-1}(x,y))
\]
and, hence, $\pi^{-1}(x,y)=A \pi^{-1}( \tau_A^{-1}(x,y))$. Because $A$ is a bijection on $\R^n$ this implies that
\begin{equation}\label{multtil6}
\begin{split}
|\pi^{-1}(x,y)\cap A\F| &= |(A \pi^{-1}( \tau_A^{-1}(x,y))) \cap A\F| 
\\
&= |\pi^{-1}( \tau_A^{-1}(x,y)) \cap \F|  \\
&=\sum_{(x',y')\in \tau_A^{-1}(x,y)}|\pi^{-1}(x',y')\cap \F|
\\
&=\sum_{(x',y')\in \tau_A^{-1}(x,y)} \varPhi(x',y').
\end{split}
\end{equation}
%
%
Combining (\ref{multtil1}) and (\ref{multtil6}) yields (\ref{eqidentity}). Applying (\ref{eqidentity}) 
we get
	\begin{equation}\label{multtil5}
	\begin{split}
\int_S\varPhi(x,y)d\bar\mu(x,y)&=\int_S \frac1a\sum_{(x',y')\in \tau_A^{-1}(x,y)}\varPhi(x',y')\,d\bar{\mu}(x,y)\\
\end{split}
\end{equation}

Let $F\subset\K$ be a fundamental domain for the lattice $\varphi(\Z^n[A])$. Because $\varphi(\Z^n[A])$ is a sublattice of index $a$ of $A^{-1}\varphi(\Z^n[A])$ on $\K$, this implies that the collection $\{\pi(A^{-1}F+z)\,|\,z\in A^{-1} \varphi(\Z^n[A])\}$ has exactly $a$ elements. Call these sets $M_1,\dots,M_a$. These sets are pairwise essentially disjoint and their union is the torus $\mathbb{T}$. For each $j=1,\dots,a$, the restricted map 
 ${\tau_A}_{\big|M_j}:M_j\rightarrow \mathbb{T}$ is a bijection. It holds that $\tau^{-1}_A(x,y)=\{ {\tau^{-1}_A}_{\big|M_j}(x,y)\,|\,j=1,\dots,a \}.$ Hence, we can rewrite \eqref{multtil5} as
\begin{equation}\label{multtil7}
\begin{split}
\int_S\varPhi(x,y)d\bar\mu(x,y)&=\int_{S} \frac1a \,\sum_{j=1}^a \varPhi({\tau^{-1}_A}_{\big|M_j}(x,y))\,d\bar{\mu}(x,y)\\
&=\sum_{j=1}^a\int_{S\cap M_j} \frac1a \, \varPhi({\tau^{-1}_A}_{\big|M_j}(x,y))\,d\bar{\mu}(x,y).\\
\end{split}
\end{equation}
 For each $j=1,\dots,a$ we apply the change of variables $(x,y)\mapsto {\tau_A}_{\big|M_j}(x,y)$ in each corresponding integral in \eqref{multtil7}. The Jacobian of each of these maps is $a$, so it follows that
 \begin{equation}\label{multtil8}
 \begin{split}
 \int_S\varPhi(x,y)d\bar\mu(x,y)&=\sum_{j=1}^a\int_{{\tau^{-1}_A}_{\big|M_j}(S\cap M_j)} \frac1a \, \varPhi(x,y)\,a\,d\bar{\mu}(x,y)\\
 &=\int_{\tau_A^{-1} (S)} \varPhi(x,y)\,d\bar{\mu}(x,y).
 \end{split}
 \end{equation}
Because the map $\tau_A$ is ergodic by Lemma \ref{Aergodic}, iterating \eqref{multtil8} $j\geqslant 1$ times and afterwards applying the ergodic theorem (see \cite[Theorem 3.20]{MR2723325}), yields
\begin{equation}
\begin{split}
\int_S\varPhi(x,y)d\bar\mu(x,y)&=\int_{\tau_A^{-j}(S)} \varPhi(x,y)d\bar{\mu}(x,y)\\
&=\int_{\mathbb{T}} \mathbbm{1}_S(\tau_A^j (x,y)) \varPhi(x,y)d\bar{\mu}(x,y)\\
&=\int_{\mathbb{T}} \Big(\frac1N \sum_{j=0}^{N-1}\mathbbm{1}_S(\tau_A^j (x,y))\Big) \varPhi(x,y)d\bar{\mu}(x,y)\\
& \xrightarrow[N \to \infty]{} \bar{\mu}(S)\int_{\mathbb{T}} \varPhi(x,y)d\bar{\mu}(x,y)=k\,\bar{\mu}(S),
\end{split}
\end{equation}
and this concludes the proof.
\end{proof}

\bibliography{bibliografia}
\bibliographystyle{plain}

\end{document}